\newcommand{\ssa}{\ensuremath{_{\alpha}}}
\newcommand{\ssb}{\ensuremath{_{\beta}}}
\newcommand{\ssg}{\ensuremath{_{\gamma}}}
\newcommand{\ssd}{\ensuremath{_{\delta}}}
\newcommand{\sset}{\ensuremath{_{\eta}}}
\newcommand{\lla }{\ensuremath{\ell\ssa}}
\newcommand{\llb}{\ensuremath{\ell\ssb}}
\newcommand{\llg}{\ensuremath{\ell\ssg}}
\newcommand{\lld}{\ensuremath{\ell\ssd}}
\newcommand{\llet}{\ensuremath{\ell\sset}}
\newcommand{\mua}{\ensuremath{\mu\ssa}}
\newcommand{\mub}{\ensuremath{\mu\ssb}}
\newcommand{\mug}{\ensuremath{\mu\ssg}}
\newcommand{\mud}{\ensuremath{\mu\ssd}}
\newcommand{\Tbar}{\ensuremath{\overline{\mathcal{T}}}}
\newcommand{\Mbar}{\overline{\mathcal{M}}}
\newcommand{\grad}{\operatorname{grad}}
\newcommand{\inj}{\operatorname{inj}}
\newcommand{\spn}{\operatorname{span_{\mathbb C}}}
\newcommand{\msc}{\operatorname{msc}}
\newcommand{\caB}{\mathcal{B}}
\newcommand{\caH}{\mathcal{H}}
\newcommand{\caM}{\mathcal{M}}
\newcommand{\caP}{\mathcal{P}}
\newcommand{\caS}{\mathcal{S}}
\newcommand{\caT}{\mathcal{T}}
\newtheorem{definition}{Definition}
\newtheorem{lemma}[definition]{Lemma}
\newtheorem{theorem}[definition]{Theorem}
\newtheorem{corollary}[definition]{Corollary}
\newtheorem{proposition}[definition]{Proposition}
\newtheorem*{thrm}{Theorem}
\newtheorem*{coro}{Corollary}
\begin{document}

\title{Geodesic-length functions and the Weil-Petersson curvature tensor\footnote{2000 Mathematics Subject Classification Primary: 30F60, 32G15, 32Q05, 53B20.}}         
\author{Scott A. Wolpert\footnote{Partially supported by National Science Foundation grant DMS - 1005852.}}        
\date{October 2, 2011} 
\maketitle
\begin{abstract}
An expansion is developed for the Weil-Petersson Riemann curvature tensor in the thin region of the Teichm\"{u}ller and moduli spaces.  The tensor is evaluated on the gradients of geodesic-lengths for disjoint geodesics.  A precise lower bound for sectional curvature in terms of the surface systole is presented.  The curvature tensor expansion is applied to establish continuity properties at the frontier strata of the augmented Teichm\"{u}ller space.   The curvature tensor has the asymptotic product structure already observed for the metric and covariant derivative.  The product structure is combined with the earlier negative sectional curvature results to establish a classification of asymptotic flats.  Furthermore, tangent subspaces of more than half the dimension of Teichm\"{u}ller space contain sections with a definite amount of negative curvature.  Proofs combine estimates for uniformization group exponential-distance sums and potential theory bounds. 
\end{abstract}

\section{Introduction}
Let $\caT$ be the Teichm\"{u}ller space of marked genus $g$, $n$-punctured Riemann surfaces with hyperbolic metrics.   Associated to the hyperbolic metrics on Riemann surfaces are the Weil-Petersson (WP) K\"{a}hler metric and geodesic-length functions on $\caT$.  The WP metric is incomplete.   The metric completion is the augmented Teichm\"{u}ller space $\Tbar$ \cite{Abdegn, Bersdeg, Msext, Wlcbms}. The completion is a $CAT(0)$ metric space - a simply connected, complete metric space with non positive curvature geometry \cite{DW2,Wlcomp,Yam2}.  

In recent works, a collection of authors have applied curvature expansions to establish significant properties for WP geometry.  In a series of papers \cite{LSY1,LSY2,LSY6,LSY3,LSY7}, Liu-Sun-Yau have effectively used the negative of the Ricci form as a general K\"{a}hler comparison metric.  The authors show that, except for WP, the canonical metrics for $\caT$ are bi Lipschitz equivalent. In particular the canonical metrics are complete. They further show that the WP metric is Mumford good for the logarithmic polar cotangent bundle for the Deligne-Mumford compactification of the moduli space of Riemann surfaces; the Chern currents computed from the metric are closed and represent the Chern classes on the compactification.  They also show that the complete K\"{a}hler-Einstein metric is Mumford good and has bounded geometry.   A Gauss-Bonnet theorem for the metrics is established and they find that the logarithmic polar cotangent bundle is Mumford stable with respect to its first Chern class.   

Burns-Masur-Wilkinson show that the WP geodesic flow for the unit tangent bundle (defined almost everywhere) for the moduli space $\caM$ of Riemann surfaces is ergodic with finite, positive metric entropy \cite{BMW}. The authors follow the general Hopf approach in the form of Pesin theory and study Birkhoff averages along the leaves of the stable and unstable foliations of the flow.   In particular they use the Katok-Strelcyn result \cite{KaSt} on the existence and absolute continuity of stable and unstable manifolds for singular non uniformly hyperbolic systems.  The authors combine our expansions for the metric and curvature tensor evaluated on gradients of geodesic-length functions to show that for a trajectory segment, the first derivative of geodesic flow is bounded in terms of a reciprocal power of the distance of the trajectory segment to the boundary of $\caT\subset\Tbar$.  The authors use McMullen's Quasi Fuchsian Reciprocity \cite{McM} to similarly bound the second derivative of geodesic flow.  

Cavendish-Parlier consider the WP diameter of the moduli space $\caM$ and show for large genus that the ratio $diam(\caM)/\sqrt g $ is bounded above by a constant multiple of $\log g$ and below by a positive constant \cite{CaPa}.  The authors further find for genus fixed and a large number of punctures that the ratio $diam(\caM)/\sqrt n$ tends to a positive limit, independent of the genus.  To obtain a diameter upper bound, they refine Brock's quasi isometry of $(\caT,d_{WP})$ to the pants graph $\mathcal{PG}$ \cite{Brkwp}.  The refinement involves the bound \cite{Wlcomp} for distance to a stratum of $\Tbar$ in terms of geodesic-lengths and the diastole of a hyperbolic surface.  They also refine the pants graph $\mathcal{PG}$ by adding diagonals for multi dimensional cubes.  They then solve the asymptotic combinatorial problem of finding an upper bound for the diameter of the quotient of $\mathcal{PG}$ by the mapping class group.  To obtain a diameter lower bound, they combine geodesic convexity and the product structure of strata.  Cavendish-Parlier show that the refined pants graph models WP geometry on a scale comparable to the diameter.    
 
Our purpose is to give an expansion for the WP Riemann curvature tensor in the {\em thin region} of the Teichm\"{u}ller and moduli spaces.  Understanding is facilitated by introducing an explicit complex frame for the tangent bundle.  We consider the curvature tensor evaluated on the gradients $\lambda\ssa=\grad\lla^{1/2}$ of roots of geodesic-lengths $\lla$.  The general approach was used in \cite{Wlbhv} to obtain expansions for the metric and in \cite{Wlbhv, Wlext} to obtain expansions for the Levi-Civita connection $D$.  The expansion for the WP Hermitian form (expansion (\ref{wpherm}) in Section \ref{tensorsec}) is
\begin{equation*}
\langle\langle\lambda\ssa,\lambda\ssb\rangle\rangle\,=\,\frac{\delta_{\alpha\beta}}{4\pi}\,+\,O(\lla^{3/2}\llb^{3/2})
\end{equation*}
for $\alpha,\beta$ simple closed geodesics, coinciding or disjoint, where given $c_0>0$, the remainder term constant is uniform for $\lla,\llb\le c_0$.  We combine a method for estimating uniformization group sums and potential theory estimates to obtain expansions for the quantities for the curvature tensor.  In general our expansions are uniform in surface dependence and independent of topological type. 

The curvature operator is the commutator of covariant differentiation
\[
R(U,V)W\,=D_UD_VW\,-\,D_VD_UW\,-D_{[U,V]}W.
\]
The Riemann curvature tensor $\langle R(U,V)W,X\rangle$ is defined on tangent spaces. Tangent spaces $T$ can be complexified by tensoring with the complex numbers.  Bochner discovered general symmetries of the curvature tensor for a K\"{a}hler metric \cite{Boch}.  In particular the complexified tensor has a block form relative to the tangent space decomposition $\mathbb C\otimes T=T^{1,0}\oplus T^{0,1}$ into {\em holomorphic} and {\em non holomorphic type}. The complexified tensor is determined by its  $T^{1,0}\times\overline{T^{1,0}}\times T^{1,0}\times\overline{T^{1,0}}$ evaluation. We consider the complexified tensor.  For a pants decomposition $\caP$, a maximal collection of disjoint simple closed geodesics, we consider the gradients $\{\lambda\ssa\}_{\alpha\in\caP}$, $\lambda\ssa=\grad\lla^{1/2}$.  The pants root-length gradients provide a global frame for the tangent bundle $T^{1,0}\caT$ \cite{WlFN}.   We establish the following results in Theorem \ref{mainthm} and Corollary \ref{wpseccurv}. 
\begin{thrm}
The WP curvature tensor evaluation for the root-length gradient for a simple closed geodesic satisfies
\[
R(\lambda\ssa,\lambda\ssa,\lambda\ssa,\lambda\ssa)\,=\,\frac{3}{16\pi^3\lla}\,+\,O(\lla),
\]
and
\[
R(\lambda\ssa,\lambda\ssa,\ ,\ )\,=\,\frac{3|\langle\langle\lambda\ssa,\ \rangle\rangle|^2}{\pi\lla}\,+\,O(\lla\|\ \|_{WP}^2).
\]
For simple closed geodesics, disjoint or coinciding, with at most pairs coinciding, the curvature evaluation $R(\lambda\ssa,\lambda\ssb,\lambda\ssg,\lambda\ssd)$ is bounded as $O((\lla\llb\llg\lld)^{1/2})$.  
\newline The root-length holomorphic sectional curvature satisfies 
\[
K(\lambda\ssa)\,=\,\frac{-3}{\pi\lla}\,+\,O(\lla)
\]
and given $\epsilon >0$, there is a positive constant $c_{g,n,\epsilon}$  such that for the systole $\Lambda(R)$ of a surface, the sectional curvatures at $R\in\caT$ are bounded below by 
\[
\frac{-3-\epsilon}{\pi\Lambda(R)}\,-\, c_{g,n,\epsilon}.
\]
For $\alpha,\beta$ disjoint with $\lla,\llb\le\epsilon$, the sections spanned by 
$(J)\lambda\ssa,(J)\lambda\ssb$ have curvature bounded as $O(\epsilon^4)$.  
For $c_0$ positive, the remainder term constants are uniform in the surface $R$ and independent of the topological type for $\lla,\llb,\llg,\lld\le c_0$.  
\end{thrm}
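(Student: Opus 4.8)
The plan is to begin from the integral formula for the WP curvature tensor, which expresses the evaluation on harmonic Beltrami differentials as
\[
R(\mu\ssa,\mu\ssb,\mu\ssg,\mu\ssd)\;=\;\int_R \caD(\mu\ssa\overline{\mu\ssb})\,\mu\ssg\overline{\mu\ssd}\;+\;\int_R \caD(\mu\ssa\overline{\mu\ssd})\,\mu\ssg\overline{\mu\ssb},
\]
where $\caD=\Dtwo$ is the resolvent of the hyperbolic Laplacian $D$, a positive self-adjoint operator, and each tangent vector $\lambda\ssa$ is represented by the harmonic Beltrami differential $\mu\ssa=\rho^{-1}\overline{\Theta\ssa}$ with $\Theta\ssa$ the relative Poincar\'e series for $\alpha$. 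The first task is to insert the thin-region expansions for the $\mu\ssa$ developed in the earlier work: up to the normalization fixed by the Hermitian form expansion stated above, $\mu\ssa$ is concentrated in the collar about $\alpha$ and decays exponentially in the collar distance, with an explicit model leading term on the uniformizing cylinder $C\ssa=\langle z\mapsto e^{\lla}z\rangle\backslash\mathH$.

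For the diagonal evaluation $R(\lambda\ssa,\lambda\ssa,\lambda\ssa,\lambda\ssa)$ the two integral terms coincide and reduce to $2\int_R \caD(|\mu\ssa|^2)\,|\mu\ssa|^2$. I would localize this to the collar and pass to the model $C\ssa$, computing $\caD(|\mu\ssa|^2)$ by solving $(D-2)u=-|\mu\ssa|^2$ with the explicit model Green's function, then integrating against $|\mu\ssa|^2$; the leading model integral is expected to produce $3/(16\pi^3\lla)$ with an $O(\lla)$ remainder controlled by the exponential decay of $\mu\ssa$ and standard potential-theory bounds for $\caD$. The holomorphic sectional curvature then follows at once by dividing through by $\langle\langle\lambda\ssa,\lambda\ssa\rangle\rangle^2=(4\pi)^{-2}+O(\lla^3)$ and inserting the sign convention, giving $K(\lambda\ssa)=-3/(\pi\lla)+O(\lla)$.

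For the off-diagonal evaluations I would exploit that disjoint geodesics have disjoint collars, so that for $\alpha\neq\beta$ the density $\mu\ssa\overline{\mu\ssb}$ is pointwise small and each integrand is supported across separated collars; bounding the two integrals by uniformization-group exponential-distance sums yields the stated $O((\lla\llb\llg\lld)^{1/2})$. The second displayed formula, $R(\lambda\ssa,\lambda\ssa,\ ,\ )=3|\langle\langle\lambda\ssa,\ \rangle\rangle|^2/(\pi\lla)+O(\lla\|\ \|_{WP}^2)$, follows by fixing the repeated $\alpha$ slot, extracting the leading diagonal contribution exactly as above, and controlling the residual pairing by the Hermitian form expansion.

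For the systole lower bound I would expand an arbitrary unit tangent vector in the root-length frame $\{\lambda\ssa\}_{\alpha\in\caP}$ for a pants decomposition containing the systole; since the frame is almost orthonormal the expansion coefficients are controlled, and the sectional curvature is assembled from the tensor estimates with the most negative contribution coming from the diagonal term of the shortest geodesic, bounded below by $-(3+\epsilon)/(\pi\Lambda(R))$, while all remaining terms are uniformly bounded and absorbed into $c_{g,n,\epsilon}$. For the mixed flat sections, the asymptotic product structure of the metric near the stratum forces the curvature in directions pairing distinct $\alpha,\beta$ to vanish to leading order, and the paired-index refinement of the off-diagonal estimate gives $O(\epsilon^4)$. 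The main obstacle I anticipate is the precise model computation of $\caD(|\mu\ssa|^2)$ and its integral, needed to pin down the constant $3/(16\pi^3)$ together with a uniform $O(\lla)$ error; a secondary difficulty is the lower bound, where arbitrary non-frame sections and the simultaneous presence of several short geodesics must be handled with estimates uniform over all surfaces and topological types.
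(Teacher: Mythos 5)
Your proposal reproduces the paper's overall architecture: the same starting formula (\ref{WPcurv}), the same collar expansions (Proposition \ref{thetprop}, Corollary \ref{gradexp}) inserted into the integrals, the same reduction of the diagonal evaluation to a collar computation of $\Delta\mua\overline{\mua}$, the same treatment of off-diagonal terms via disjointness of collars and pointwise smallness of $\mua\overline{\mub}$, and the same frame-based strategy for the systole lower bound. The one genuine divergence is the key analytic step, and it is exactly the step you flag as your main obstacle. You propose to compute $\caD(|\mua|^2)$ by passing to the model cylinder and inverting $(D-2)$ there with the explicit Green's function. The paper never inverts anything: it takes the cutoff candidate $\chi\, a\ssa(\alpha)^2\sin^2\theta$, applies $(D-2)$ to it using the elementary identity $(D_{\theta}-2)\sin^2\theta=-4\sin^4\theta$ of equation (\ref{dsin4}), checks that the result matches $-4\mua\overline{\mua}$ up to a pointwise $O(\lla^2)$ error, and then uses the unit $C_0$-norm of $\Delta$ (Theorem \ref{delprop}) to convert that source comparison into the expansion $2\Delta\mua\overline{\mua}=a\ssa(\alpha)^2\sin\ssa^2\theta+O(\lla^2)$ of Theorem \ref{delmuab}. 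This forward-differentiation device is what makes the localization automatic: the resolvent on $R$ is a global operator, so a model computation on the cylinder does not by itself yield $\caD(|\mua|^2)$ on $R$; transferring it requires a maximum-principle comparison with control of boundary values and of Green's function decay (Proposition \ref{grest}). Your route can be completed this way -- the paper even constructs the one-dimensional Green's function $\mathbf G$ in Section 4 before explicitly declining to use it -- but as written, ``standard potential-theory bounds for $\caD$'' names the whole difficulty rather than a routine step.

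Two secondary points. First, for the mixed sections you correctly sense that the generic bound $O((\lla\llb\llg\lld)^{1/2})$ only gives $O(\epsilon^2)$ and that a paired-index refinement is needed; in the paper this refinement comes not from the asymptotic product structure of the metric but from the magnitudes of the cross main coefficients, $a\ssb(\alpha)=O(\lla\llb^2)$ in Theorem \ref{gradpair}, fed into the separate treatment of the patterns $\alpha\alpha\beta\beta$ and $\alpha\beta\alpha\beta$ in Theorem \ref{Iexp}. Second, in the lower bound your premise that ``the frame is almost orthonormal'' is an overstatement: near-orthonormality holds only for geodesics of small length, while at the Bers scale the frame is merely uniformly comparable to an orthonormal one (the matrix of pairings varies in a compact subset of $GL(\mathbb R)$, Corollary \ref{pantsbd}). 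To get the sharp constant $-3/\pi\Lambda(R)$ rather than one degraded by frame distortion, the paper splits the pants curves at an auxiliary scale $\delta$, uses near-orthonormality below $\delta$ and uniform comparability above $\delta$, and arrives at a bound of the form $\frac{-3}{(1-C'\delta)^2(1-\epsilon)^2\pi}\min\{\Lambda(R)^{-1},(C^2\delta)^{-1}\}$; without this two-scale argument your sketch would only produce a lower bound with an unspecified multiplicative loss in the leading constant.
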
  
\noindent An open question is to find a precise upper bound for sectional curvatures in terms of the surface systole. 

The augmented Teichm\"{u}ller space $\Tbar$ is described in terms of Fenchel-Nielsen coordinates $(\lla,\vartheta\ssa)_{\alpha\in\caP}$, $\caP$ a pants decomposition, and the Chabauty topology for $PSL(2;\mathbb R)$ representations.  The partial compactification is introduced by extending the range of the Fenchel-Nielsen parameters.   For a length coordinate $\lla$ equal to zero, the angle $\vartheta\ssa$ is not defined and in place of a geodesic $\alpha$ on the surface, there appears a pair of cusps.  The new points describe unions of hyperbolic surfaces, {\em noded Riemann surfaces}, with {\em components} and formal pairings of cusps, {\em nodes}.   The genus of a union of hyperbolic surfaces is defined by the relation $Total\ area=2\pi(2g-2+n)$.   For a disjoint collection of geodesics $\sigma\subset\caP$, the $\sigma$-null stratum is the locus of noded Riemann surfaces $\caT(\sigma)=\{R\mbox{ a union }\mid\lla(R)=0 \mbox{ iff } \alpha\in\sigma\}$.   The partial compactification is $\Tbar =\caT\cup_{\sigma}\caT(\sigma)$ for the union over all homotopically distinct disjoint collections of geodesics.  Neighborhood bases for points of $\caT(\sigma)\subset\Tbar$  are specified by the condition that for each pants decomposition $\caP$, $\sigma\subset\mathcal P$, the projection
$((\ell_{\beta},\vartheta_{\beta}),\ell_{\alpha}):
\mathcal{T}\cup\mathcal{T}(\sigma)\rightarrow\prod_{\beta\in\caP-\sigma}(\mathbb{R}_+\times\mathbb{R})\times\prod_{\alpha\in\sigma}(\mathbb{R}_{\ge 0})$  is continuous. 

The formulas for the metric \cite{DW2,Msext,Wlbhv}, covariant derivative \cite{Wlbhv,Wlext} and curvature tensor display the asymptotic product structure
\[
\prod_{\alpha\in\sigma}\,\spn\{\lambda\ssa\}\ \times\prod_{R^{\natural}\in\,
\operatorname{parts}^{\sharp}(\sigma)}T^{1,0}\caT(R^{\natural}),
\]
 for an extension of the tangent bundle over $\caT(\sigma)$, where \emph{parts}$^{\sharp}(\sigma)$ are the components of the $\sigma$-complement, that are not thrice-punctured spheres, for the surfaces represented in $\caT(\sigma)$.  Evaluations involving more than a single factor of the product tend to zero approaching $\caT(\sigma)$. Evaluations for a single factor tend to evaluations for either the standard metric for opening a node or for a lower dimensional Teichm\"{u}ller space.  The structure is formal since $\Tbar$ is not a complex manifold and the corresponding extension of the vector bundle of holomorphic quadratic differentials over $\Mbar$ is not the cotangent bundle, but the logarithmic polar cotangent bundle \cite{HMbook}. Nevertheless the product structure applies for limits of the metric and curvature tensor.  The individual product factors have strictly negative sectional curvature.   The $\lambda\ssa$-section is holomorphic with curvature described in the above Theorem.  The earlier general result \cite{Trcurv,Wlchern} establishes negative curvature for the Teichm\"{u}ller spaces $\caT(R^{\natural})$.  Recall for a product of negatively curved manifolds, a zero curvature tangent section has at most one $\mathbb R$-dimensional projection into the tangent space of each factor.  
We establish the counterpart for WP.  By considering $\mathbb C$-sums of the indeterminates $\lambda\ssg$, $\gamma\in\caP$, a germ $\mathcal V$  is defined for an extension over 
$\caT(\sigma)$ of the tangent bundle of $\caT$.  A formal product structure for $\mathcal V$ is defined by the present considerations.  We establish in Theorem \ref{curvcont} that, except for the diagonal evaluation, curvature tensor evaluations are continuous at $\caT(\sigma)$.  Continuity provides that curvatures near $\caT(\sigma)$ can be understood in terms of evaluations at $\caT(\sigma)$.   An application is the classification in Corollary \ref{flats}.  

\begin{coro}\textup{Classification of asymptotic flats.}  Let $\mathcal S$ be a $\mathbb R$-subspace of the fiber of $\mathcal V$ over a point of $\caT(\sigma)$.  The subspace $\mathcal S$ is a limit of a sequence of tangent multisections over points of $\caT$ with all sectional curvatures tending to zero if and only if the projections of $\mathcal S$ onto the factors of the product structure are at most one $\mathbb R$-dimensional.  The maximal dimension for $\mathcal S$ is $|\sigma|+|\operatorname{parts}^{\sharp}(\sigma)|\le\dim_{\mathbb C}\caT$.
\end{coro}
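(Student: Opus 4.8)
The plan is to reduce the statement to the classical product computation by first identifying the limiting curvature tensor on the fiber of $\mathcal{V}$ over a point $p\in\caT(\sigma)$, and then invoking the negativity of the individual factors. By Theorem~\ref{curvcont} every curvature evaluation except the diagonal term $R(\lambda\ssa,\lambda\ssa,\lambda\ssa,\lambda\ssa)$, $\alpha\in\sigma$, is continuous at $\caT(\sigma)$, and the product structure pins down the limits: a limiting evaluation vanishes unless all four arguments project to a single factor, where the factors are the complex lines $\spn\{\lambda\ssa\}$, $\alpha\in\sigma$, and the lower Teichm\"uller spaces $T^{1,0}\caT(R^{\natural})$, $R^{\natural}\in\operatorname{parts}^{\sharp}(\sigma)$. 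On each $T^{1,0}\caT(R^{\natural})$ the limiting evaluation is the Weil-Petersson curvature of $\caT(R^{\natural})$, which is strictly negative by \cite{Trcurv,Wlchern}. The sole obstruction to continuity sits on the real plane $\operatorname{span}_{\mathbb R}\{\lambda\ssa,J\lambda\ssa\}$ inside $\spn\{\lambda\ssa\}$, where the holomorphic sectional curvature $K(\lambda\ssa)=-3/(\pi\lla)+O(\lla)$ of Theorem~\ref{mainthm} diverges to $-\infty$. Thus, for the purposes of flats, each $\spn\{\lambda\ssa\}$ behaves as a factor whose unique two-plane carries curvature $-\infty$, and each $T^{1,0}\caT(R^{\natural})$ as a strictly negatively curved factor.

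For necessity, suppose $\mathcal{S}=\lim_k\mathcal{S}_k$ with $\mathcal{S}_k\subset T_{R_k}\caT$, $R_k\to p$, and all sectional curvatures of $\mathcal{S}_k$ tending to zero, and suppose for contradiction that some factor $F$ receives a projection $\pi_F(\mathcal{S})$ of real dimension at least two. If $F=\spn\{\lambda\ssa\}$ then $\mathcal{S}$ meets independent $\lambda\ssa$ and $J\lambda\ssa$ directions, so planes in $\mathcal{S}_k$ approach $\operatorname{span}_{\mathbb R}\{\lambda\ssa,J\lambda\ssa\}$, whose sectional curvature tends to $-\infty$, a contradiction. If $F=T^{1,0}\caT(R^{\natural})$, choose $X,Y\in\mathcal{S}$ with $\pi_FX,\pi_FY$ real-independent; by Theorem~\ref{curvcont} the sectional curvature of the plane $X\wedge Y$ in $\mathcal{S}_k$ converges to the product value, a sum over factors of nonpositive terms whose $F$-term is strictly negative by \cite{Trcurv,Wlchern}, again contradicting vanishing. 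Hence every projection is at most one real-dimensional.

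For sufficiency, assume each factor projection of $\mathcal{S}$ is at most one real-dimensional. Writing a basis of $\mathcal{S}$ in the frame $\{\lambda\ssg\}_{\gamma\in\caP}$ underlying $\mathcal{V}$ and freezing the coefficients yields subspaces $\mathcal{S}_k\subset T_{R_k}\caT$ along any $R_k\to p$. For a plane $X\wedge Y$ in $\mathcal{S}$, the one-dimensional condition forces $\pi_FX,\pi_FY$ to be real-collinear in every factor $F$; in particular no plane meets $\operatorname{span}_{\mathbb R}\{\lambda\ssa,J\lambda\ssa\}$, so the singular diagonal term is never invoked. By the curvature expansions each within-factor contribution degenerates to zero under collinearity, while the cross-factor contributions tend to zero through the product structure and the off-diagonal decay of Theorem~\ref{mainthm} (the $O((\lla\llb\llg\lld)^{1/2})$ bound and the $O(\epsilon^4)$ bound for the $(J)\lambda\ssa,(J)\lambda\ssb$ sections). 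Thus every sectional curvature of $\mathcal{S}_k$ tends to zero and $\mathcal{S}$ is the asserted limit. Finally, one direction per factor is the most a flat can carry, so $\dim_{\mathbb R}\mathcal{S}\le|\sigma|+|\operatorname{parts}^{\sharp}(\sigma)|$; the product decomposition of the stratum gives $\dim_{\mathbb C}\caT=|\sigma|+\sum_{R^{\natural}\in\operatorname{parts}^{\sharp}(\sigma)}\dim_{\mathbb C}\caT(R^{\natural})$, and since each non-thrice-punctured part has positive dimension this forces $|\operatorname{parts}^{\sharp}(\sigma)|\le\dim_{\mathbb C}\caT-|\sigma|$, with the maximum realized by selecting one direction from each factor.

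I expect the main obstacle to be the bookkeeping at the singular $\spn\{\lambda\ssa\}$ factors: the divergence of $K(\lambda\ssa)$ must be used to force one-dimensionality in the necessity direction, while in the sufficiency direction one must confirm that the one-dimensional condition suppresses precisely this divergence \emph{uniformly} as $R_k\to p$, so that the limiting flat is produced by honest finite-curvature sections rather than by cancellation against a divergent term. Making this uniform suppression quantitative, rather than merely pointwise in the limit, is where the curvature expansions of Theorem~\ref{mainthm} and the continuity of Theorem~\ref{curvcont} must be applied with care.
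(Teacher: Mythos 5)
Your proposal is correct and follows essentially the same route as the paper's proof: sectional curvature is written via the Bochner formula (\ref{seccurv}) in the frame $\{\lambda\ssg\}_{\gamma\in\caP}$, the numerator splits into within-factor contributions plus a remainder vanishing at $\caT(\sigma)$ by Theorem~\ref{curvcont} and the vanishing conventions, strict negativity of the factor curvatures gives necessity, and frozen frame coefficients with real-collinear factor projections (which kill the divergent diagonal term identically, by the antisymmetry of the curvature tensor) give sufficiency. The only ingredient you leave implicit is that the denominator of (\ref{seccurv}) is continuous and bounded away from zero (Lemma~\ref{inprcts}); this, together with the exact vanishing of the diagonal coefficient along the frozen-coefficient sequence, also disposes of the ``uniform suppression'' concern in your closing paragraph, so no further quantitative estimate is needed.
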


The gradient $\grad\lla$ describes infinitesimal pinching of the length of $\alpha$ and the corresponding infinitesimal Fenchel-Nielsen twist is $t\ssa=\frac{i}{2}\grad\lla$ \cite{WlFN,Wlcusps}.   The flats classification includes the Huang result \cite{Zh2} that independent pinchings and twists describe asymptotically flat tangent sections.  Flat subspaces are an important consideration in global geometry.  Gromov defined the rank of a metric space to be the maximal dimension of a quasi isometric embedding of a Euclidean space.  Brock-Farb \cite{BF} show that the rank of $\caT$ in the sense of Gromov is at least  
$\lfloor(1+\dim_{\mathbb C}\caT)/2\rfloor$. The number $\mathbf m=\lfloor(1+\dim_{\mathbb C}\caT)/2\rfloor$ is the maximum number of factors for the second product of the asymptotic product structure.  Behrstock-Minsky \cite{BMi} show that the Gromov rank is exactly $\mathbf m$.  In \cite[Section 6]{Wlcomp} we find that a locally Euclidean subspace of $\Tbar$ has dimension at most $\mathbf m$. The above result provides additional information about rank and asymptotic flats. The considerations show that beyond asymptotic flats there is a negative upper bound for sectional curvature, Corollary \ref{beyflat}.

\begin{coro}
There exists a negative constant $c_{g,n}$ such that a subspace $\caS$ of a tangent space of $\caT$ with $\dim_{\mathbb R}\caS > \dim_{\mathbb C}\caT$ contains a section with sectional curvature at most $c_{g,n}$.  
\end{coro}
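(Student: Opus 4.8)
The plan is to argue by contradiction, combining compactness with the asymptotic flats classification of Corollary \ref{flats}. First I would reduce to the minimal offending dimension: if $\dim_{\mathbb R}\caS > \dim_{\mathbb C}\caT$, then any $(\dim_{\mathbb C}\caT+1)$-dimensional subspace $\caS'\subseteq\caS$ inherits all of its sections from $\caS$, so it suffices to produce the bound for subspaces with $\dim_{\mathbb R}\caS=\dim_{\mathbb C}\caT+1$. Suppose now that no negative constant $c_{g,n}$ works. Then for each $c<0$ there is a point of $\caT$ and such a subspace all of whose tangent $2$-planes have curvature exceeding $c$. Letting $c\to 0^-$ produces a sequence of subspaces $\caS_k$ over points $R_k\in\caT$ with $\dim_{\mathbb R}\caS_k=\dim_{\mathbb C}\caT+1$ and with the minimum of sectional curvature over the $2$-planes of $\caS_k$ tending to zero. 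Since WP sectional curvature is strictly negative, every $2$-plane of $\caS_k$ then has curvature trapped between this minimum and zero, so all sectional curvatures of $\caS_k$ tend uniformly to zero; equivalently, the $\caS_k$ are becoming asymptotically flat.

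To extract a limit I would pass to the moduli space, using that sectional curvature and the dimension condition are mapping-class-group invariant, and that the frame $\{\lambda\ssg\}_{\gamma\in\caP}$, the product structure, and the germ $\mathcal V$ are all defined equivariantly. The pairs $(R_k,\caS_k)$ lie in the Grassmannian bundle of $(\dim_{\mathbb C}\caT+1)$-planes over the augmented space, whose base projects to the compact Deligne-Mumford space $\Mbar$. Passing to a subsequence, I would arrange that $R_k$ converges to a point of a stratum $\caT(\sigma)$, allowing $\sigma=\emptyset$, and that $\caS_k$ converges, expressed in the frame coordinates that extend across $\caT(\sigma)$, to a subspace $\caS_\infty$ in the fiber of $\mathcal V$ over the limit point. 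Because the Grassmannian fibers are compact and the convergence is read off in the fixed frame coordinates, the limit satisfies $\dim_{\mathbb R}\caS_\infty=\dim_{\mathbb C}\caT+1$.

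By construction $\caS_\infty$ is a limit of tangent multisections over points of $\caT$ with all sectional curvatures tending to zero, which is exactly the defining condition for an asymptotic flat in Corollary \ref{flats}. That classification bounds the dimension of an asymptotic flat by $|\sigma|+|\operatorname{parts}^{\sharp}(\sigma)|\le\dim_{\mathbb C}\caT$, contradicting $\dim_{\mathbb R}\caS_\infty=\dim_{\mathbb C}\caT+1$. This contradiction establishes the existence of the uniform negative constant $c_{g,n}$, and exhibits the result as the expected refinement, beyond flats, of the product-of-negatively-curved-manifolds heuristic recalled in the introduction.

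The step demanding the most care is the passage to the boundary. A purely interior compactness argument on $\caM$ fails, since strict negativity of curvature degenerates to zero as lengths pinch; indeed the diagonal holomorphic sectional curvature $K(\lambda\ssa)=\frac{-3}{\pi\lla}+O(\lla)$ shows curvature can instead tend to $-\infty$. This blow-up works in our favour: any section of $\caS_k$ whose curvature tends to $-\infty$ already realizes the bound, so under the contradiction hypothesis all sectional curvatures stay bounded, and the continuity of curvature evaluations away from the diagonal (Theorem \ref{curvcont}) legitimately identifies the curvatures of $\caS_\infty$ with the limiting values, namely zero. The real obstacle is thus to verify that convergence of subspaces in the germ $\mathcal V$ preserves dimension and is compatible with the product-structure factors, so that Corollary \ref{flats} applies to $\caS_\infty$ verbatim; this is where the equivariant frame coordinates and the continuity theorem must be deployed with precision.
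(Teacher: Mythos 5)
Your proposal is correct and follows essentially the same route as the paper's proof: a contradiction/contrapositive argument taking a sequence of subspaces whose minimal sectional curvature tends to zero (hence, by negativity, all curvatures tend to zero), using the isometric $Mod$-action, compactness of $\Tbar/Mod$, and the frame-defined extension $\mathcal V$ to extract a limiting subspace in a fiber over a stratum, then invoking Corollary \ref{flats} to bound its dimension by $\dim_{\mathbb C}\caT$. Your added reduction to $\dim_{\mathbb R}\caS=\dim_{\mathbb C}\caT+1$ and the remark that curvatures blowing up to $-\infty$ only help are harmless refinements of the same argument.
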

 
Central to our considerations is the space of holomorphic quadratic differentials $Q(R)$ and the elements representing the differentials of geodesic-lengths.  In Section \ref{secgrad} we discuss that for small geodesic-length, the differential for a finite area hyperbolic surface is closely approximated by the differential for the cyclic cover corresponding to the geodesic.  The approximation is basic to our analysis.  Comparing the WP and Teichm\"{u}ller metrics and developing expansions for the WP metric involves a comparison of norms for $Q(R)$.  For $ds^2$ the complete hyperbolic metric on $R$ and $\phi\in Q(R)$, let $\|\varphi\|_{\infty}=\sup |\varphi(ds^2)^{-1}|$ and $\|\varphi\|_2$ be the WP norm.  From the analysis for differentials of geodesic-length, we establish an asymptotic norm comparison in Corollary \ref{pantsbd}. 
\begin{coro}\textup{Comparison of norms.}
Given $\epsilon>0$, there is a positive value $\Lambda_0$, such that for the surface systole $\Lambda(R)\le\Lambda_0$, the maximal ratio of $L^{\infty}$ and $L^2$ norms for $Q(R)$ satisfies 
\[
(1-\epsilon)\Big(\frac{2}{\pi\Lambda(R)}\Big)^{1/2}\,\le\,\max_{\varphi\in Q(R)}\frac{\|\varphi\|_{\infty}}{\|\varphi\|_2}\,\le\,(1+\epsilon)\Big(\frac{2}{\pi\Lambda(R)}\Big)^{1/2}.
\]
\end{coro}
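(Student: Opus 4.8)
The plan is to reduce the extremal problem to a uniform pointwise estimate and then to localize it to the thin collars, where the hyperbolic cylinder supplies an exact model. Writing $\rho^2$ for the area density of the hyperbolic metric $ds^2$, the functional $\varphi\mapsto\varphi(p)/\rho^2(p)$ is bounded on the Hilbert space $(Q(R),\|\ \|_2)$, so with $k_p$ its representing kernel one has, by Cauchy-Schwarz in both directions,
\[
\max_{\varphi\in Q(R)}\frac{\|\varphi\|_{\infty}}{\|\varphi\|_2}\,=\,\sup_{p\in R}\|k_p\|_2\,=\,\sup_{p\in R}\frac{\sqrt{B(p,p)}}{\rho^2(p)},
\]
where $B$ is the reproducing (Bergman) kernel of $Q(R)$; the maximum over $\varphi$ and the supremum over $p$ are attained together by $k_p$. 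Thus both inequalities reduce to estimating $|\varphi(p)|/\rho^2(p)$ against $\|\varphi\|_2$ uniformly in $p$ and $\varphi$. The model is the collar of a short geodesic $\beta$ of length $\llb$: in the coordinate $\zeta=u+i\vartheta$ on $\mathH/\langle z\mapsto e^{\llb}z\rangle$ with $z=e^{\zeta}$, the metric is $ds^2=\sin^{-2}\vartheta\,|d\zeta|^2$ and the differential $(dz/z)^2=d\zeta^2$ satisfies $|d\zeta^2|/\rho^2=\sin^2\vartheta\le 1$ with $\|d\zeta^2\|_2^2=\int_0^{\llb}\!\!\int_0^{\pi}\sin^2\vartheta\,d\vartheta\,du=\pi\llb/2$, so its ratio is exactly $(2/\pi\llb)^{1/2}$.

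For the lower bound I take $\beta=\alpha$ a systole, $\llb=\Lambda(R)$, and form the geodesic-length differential $\Theta\ssa$, the Poincar\'e series of $d\zeta^2$ over the cyclic cover corresponding to $\alpha$. By the cyclic-cover approximation of Section \ref{secgrad}, $\Theta\ssa$ agrees with the model $d\zeta^2$ on the collar up to a relative error tending to zero with $\Lambda$, while its mass outside the collar is of lower order. Hence $\|\Theta\ssa\|_{\infty}=1-o(1)$, essentially attained near the core, and $\|\Theta\ssa\|_2^2=(1+o(1))\pi\Lambda/2$, so the ratio for $\Theta\ssa$ is at least $(1-\epsilon)(2/\pi\Lambda)^{1/2}$ once $\Lambda\le\Lambda_0(\epsilon)$. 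Equivalently, evaluating at a core point $p_0$ gives $B(p_0,p_0)\ge|\Theta\ssa(p_0)|^2/\|\Theta\ssa\|_2^2$, yielding the same bound.

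For the upper bound I fix $\varphi$ and bound $|\varphi(p)|/\rho^2(p)$, splitting $R$ at injectivity radius $\epsilon_1$. On the thick part $\inj\ge\epsilon_1$ the density $\rho^2$ is bounded above and below and fixed-size Euclidean disks embed, so the sub-mean-value inequality for the subharmonic function $|\varphi|^2$, together with $\int_D|\varphi|^2\le(\sup_D\rho^2)\|\varphi\|_2^2$, gives $|\varphi(p)|/\rho^2(p)\le C_{\epsilon_1}\|\varphi\|_2$, a bound of size $O(1)$ that is dominated by $(2/\pi\Lambda)^{1/2}$ as $\Lambda\to0$. On each collar of a geodesic $\beta$ with $\llb<2\epsilon_1$ I expand $\varphi$ in the $L^2$-orthogonal Fourier modes $b_n\,e^{2\pi i n\zeta/\llb}\,d\zeta^2$ of the collar coordinate. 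The constant mode obeys $|b_0|\sin^2\vartheta\le|b_0|$ while carrying collar mass $(1-O(\llb))\,\pi\llb|b_0|^2/2$ after truncation at $\vartheta_0\asymp\llb$; the nonconstant modes carry a factor $e^{-2\pi|n|\vartheta/\llb}$, so their density is pushed toward the collar boundary where the weight $\sin^2\vartheta$ is $O(\llb^2)$, and they cannot produce a ratio of order $\Lambda^{-1/2}$. Combining with $\llb\ge\Lambda$ gives $|\varphi(p)|/\rho^2(p)\le(1+O(\epsilon_1))(2/\pi\Lambda)^{1/2}\|\varphi\|_2$ on the thin part; coordinating $\epsilon_1$, and then $\Lambda_0$, with $\epsilon$ makes every remainder uniform in the surface and independent of topological type.

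The delicate point, and the main obstacle, is the thin-collar analysis: one must show that the constant Fourier mode is the only one capable of realizing a ratio of order $\Lambda^{-1/2}$, with the exact normalization $\int\sin^2\vartheta=\pi/2$ surviving truncation at $\vartheta_0\asymp\llb$, and must control the transition annuli where the collar meets the thick part so that neither the mean-value constant nor the mode truncation erodes the sharp factor. In the reproducing-kernel formulation this is the statement that $\rho^{-2}\sqrt{B(\cdot,\cdot)}$ is maximized to leading order at the core of the systole, where it matches the cylinder value $(2/\pi\Lambda)^{1/2}$; the thick-part mean-value bound then guarantees the maximum cannot migrate away from the thin collars as $\Lambda\to0$. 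The remaining steps — the explicit cylinder integral and the threshold bookkeeping — are routine once these localization estimates, which are exactly the output of the cyclic-cover analysis of the main text, are in place.
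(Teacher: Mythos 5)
Your outline runs, at its core, on the same estimates as the paper's own proof: the collar Laurent/Fourier decomposition in which the constant mode reproduces the exact cylinder ratio $(2/\pi\llb)^{1/2}$ while the nonconstant modes are dominated by the supremum on the collar boundary --- this is precisely Proposition \ref{CSest}, so your deferral of the ``delicate point'' to the cyclic-cover analysis is a citation of a stated result, not a hidden gap --- together with a mean-value estimate on the thick part; and your lower bound via the Poincar\'{e} series of the systole is also the paper's (Proposition \ref{thetprop}, Theorem \ref{gradpair} and Corollary \ref{gradexp} give $|\grad\ell_{\alpha'}|=2/\pi+O(\ell_{\alpha'}^3)$ at the core and $\|\grad\ell_{\alpha'}\|_2^2=\tfrac{2}{\pi}\ell_{\alpha'}+O(\ell_{\alpha'}^4)$). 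Where you genuinely differ is the organization: the paper decomposes $\caH(R)$ globally and WP-orthogonally as $\mu=\sum_{\alpha\in\sigma}a^{\alpha}\lambda\ssa+\mu_0$, shows $\|\mu_0\|_{\infty}\le c_1\|\mu_0\|_{WP}$ (the vanishing of the main collar coefficients of $\mu_0$, via Theorem \ref{gradpair}, is what makes Proposition \ref{CSest} applicable to it), and then maximizes the ratio over the finite-dimensional gradient span using matrix expansions; you instead work pointwise through the reproducing kernel and Fourier-decompose $\varphi$ on each collar, bounding $|b_0|$ by the collar $L^2$ mass via orthogonality of the modes. That variant is coherent, more local, and buys you a cleaner reduction (no matrix bookkeeping over the set of short geodesics), at the cost of losing the paper's byproduct that the extremal is approximately the systole gradient --- which you recover anyway in your lower bound.

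The genuine gap is the cusp regions. The statement concerns genus $g$, $n$-punctured surfaces, and for $n>0$ your dichotomy ``thick part where $\inj\ge\epsilon_1$'' versus ``collars of geodesics with $\llb<2\epsilon_1$'' does not cover the surface: a point deep in a cusp region has small injectivity radius but lies in no collar. There, neither half of your argument applies --- the mean-value constant degenerates as $\inj\to 0$, and there is no closed geodesic, no cylinder model, and no constant mode normalized by a core length. The paper closes exactly this case with Lemma \ref{cuspbd}: writing $\varphi=f(w)(dw/w)^2$ in the cusp coordinate, with $f$ holomorphic and vanishing at $w=0$ (this uses that elements of $Q(R)$ have at most simple poles at punctures), the Schwarz Lemma bounds the intrinsic ratio $|\varphi|(ds^2)^{-1}$ on the cusp region by its maximum on the boundary circle, which by Lemma \ref{enhcollar} has definite injectivity radius and is therefore covered by your thick-part bound. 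Splicing this in repairs your upper bound with no other changes. A smaller inaccuracy: your closing claim that every remainder is ``independent of topological type'' overreaches --- the paper notes, in the remarks following Theorem \ref{wppastbd}, that the determination of $\Lambda_0$ here does depend on topological type --- so either drop that claim or prove it separately; the statement does not require it.
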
     

\section{Preliminaries}\label{prelim}
We follow the exposition of \cite{Wlext}.  Points of the Teichm\"{u}ller space $\caT$ are equivalence classes $\{(R,ds^2,f)\}$ of marked genus $g$, $n$-punctured Riemann surfaces with complete hyperbolic metrics and reference homeomorphisms $f:F\rightarrow R$ from a base surface $F$.  Triples are equivalent provided there is a conformal isomorphism of Riemann surfaces homotopic to the composition of reference homeomorphisms.  Basic invariants of a hyperbolic metric are the lengths of the unique closed geodesic representatives of the non peripheral free homotopy classes.  For a non peripheral free homotopy class $[\alpha]$ on $F$, the length of the unique geodesic representative for $f(\alpha)$ is the value of the geodesic-length $\ell_{\alpha}$ at the marked surface.  For $R$ with uniformization representation $f_*:\pi_1(F)\rightarrow\Gamma \subset PSL(2;\mathbb R)$ and $\alpha$ corresponding to the conjugacy class of an element $A$ then $\cosh \ell_{\alpha}/2= tr A/2$.  Collections of geodesic-lengths provide local $\mathbb R$-coordinates for $\caT$, \cite{Busbook,ImTan,WlFN}. 

From Kodaira-Spencer deformation theory the infinitesimal deformations of a surface $R$ are represented by the Beltrami differentials $\caH (R)$ harmonic with respect to the hyperbolic metric, \cite{Ahsome}. Also the cotangent space of $\caT$ at $R$ is $Q(R)$ the space of holomorphic quadratic differentials with at most simple poles at the punctures of $R$.  The holomorphic tangent-cotangent pairing is
\[
(\mu,\varphi)=\int_R \mu \varphi
\]
for $\mu\in\caH(R)$ and $\varphi\in Q(R)$.  Elements of $\caH (R)$ are symmetric tensors given as $\overline\varphi(ds^2)^{-1}$ for $\varphi\in Q(R)$ and $ds^2$ the hyperbolic metric.  The Weil-Petersson (WP) Hermitian metric and cometric pairings are given as
\[
\langle\mu,\nu\rangle =\int_R\mu\overline\nu\, dA\quad\mbox{and}
\quad\langle\varphi,\psi\rangle =\int_R\varphi\overline\psi (ds^2)^{-1}
\]
for $\mu,\nu\in\caH (R)$ and $\varphi, \psi\in Q(R)$ and $dA$ the hyperbolic area element.  The WP Riemannian metric is $\Re\langle\ ,\ \rangle$.  The metric is K\"{a}hler, non complete, with non pinched negative sectional curvature and determines a $CAT(0)$ geometry for Teichm\"{u}ller space, see \cite{Ahsome, Zh2, Trmbook, Wlcomp, Wlcbms} for references and background. 
     
A Riemann surface with hyperbolic metric can be considered as the union of a $thick$ region where the injectivity radius is bounded below by a positive constant and a complementary $thin$ region.  The totality of all $thick$ regions of Riemann surfaces of a given topological type forms a compact set of metric spaces in the Gromov-Hausdorff topology.  A $thin$ region is a disjoint union of collar and cusp regions.  We describe basic properties of collar and cusp regions including bounds for the injectivity radius and separation of simple geodesics. 

We follow Buser's presentation \cite[Chap. 4]{Busbook}.   For a geodesic $\alpha$ of length $\ell_{\alpha}$ on a Riemann surface, the extended collar about the geodesic is $\hat c(\alpha)=\{d(p,\alpha)\le w(\alpha)\}$ for the width $w(\alpha)$, $\sinh w(\alpha)\sinh \ell_{\alpha}/2=1$.  The width is given as $w(\alpha)=\log 4/\ell_{\alpha}+O(\ell_{\alpha}^2)$ for $\ell_{\alpha}$ small.  For $\mathbb H$ the upper half plane with hyperbolic distance $d(\ ,\ )$, an extended collar is covered by the region  $\{d(z,i\mathbb R^+)\le w(\alpha)\}\subset \mathbb H$ with deck transformations generated by $z\rightarrow e^{\ell_{\alpha}}z$.  The quotient $\{d(z,i\mathbb R^+)\le w(\alpha)\}\slash \langle z\rightarrow e^{\ell_{\alpha}}z\bigr >$ embeds into the Riemann surface. For $z$ in $\mathbb H$, the  region is approximately $\{\ell_{\alpha}/2\le \arg z\le \pi-\ell_{\alpha}/2\}$.   
An extended cusp region is covered by the region $\{\Im z\ge 1/2\}\subset\mathbb H$ with deck transformations generated by $z\rightarrow z+1$.  The quotient $\{\Im z\ge 1/2\}\slash \langle z\rightarrow z+1\rangle $ embeds into the Riemann surface.  To ensure that uniform bands around boundaries embed into the Riemann surface, we will use {\em collars} $c(\alpha)$ defined by covering regions $\{\ell_{\alpha}\le \arg z\le \pi-\ell_{\alpha}\}$ and {\em cusp regions} defined by covering regions $\{\Im z\ge 1\}$.  Collars are contained in extended collars and cusp regions are contained in extended cusp regions.  The boundary of a collar $c(\alpha)$ for $\ell_{\alpha}$ bounded and boundary of a cusp region have length approximately $1$.

\begin{theorem}
\label{collars}
For a Riemann surface of genus $g$ with $n$ punctures, given pairwise disjoint simple closed geodesics $\alpha_1,\dots,\alpha_m$, there exist simple closed geodesics $\alpha_{m+1},\dots,\alpha_{3g-3+n}$ such that $\alpha_1,\dots,\alpha_{3g-3+n}$ are pairwise disjoint.  The extended collars $\hat c(\alpha_j)$ about $\alpha_j$, $1\le j\le 3g-3+n$, and the extended cusp regions are mutually pairwise disjoint.
\end{theorem}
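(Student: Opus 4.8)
\emph{Proof plan.} The statement has two parts: a topological completion to a pants decomposition, and the metric collar separation, which is the classical collar lemma. The topological part is routine, and the collar estimate is the substance. First I would complete $\alpha_1,\dots,\alpha_m$ to a maximal system. A maximal collection of pairwise disjoint, pairwise non-homotopic, non-peripheral simple closed geodesics cuts $R$ into thrice-punctured spheres, and an Euler characteristic count gives that any such maximal collection has exactly $3g-3+n$ elements; any partial system extends to a maximal one by decomposing the complementary subsurfaces that are not yet pairs of pants. Since disjoint free homotopy classes on a hyperbolic surface have disjoint geodesic representatives---lifts of crossing geodesics would force the classes themselves to cross---the geodesics $\alpha_{m+1},\dots,\alpha_{3g-3+n}$ produced this way are pairwise disjoint and disjoint from $\alpha_1,\dots,\alpha_m$.

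Next I would pass to the universal cover. Writing $R=\mathH/\Gamma$, each $\alpha_j$ is the conjugacy class of a primitive hyperbolic $A_j\in\Gamma$ with axis $\tilde\alpha_j$ and translation length $\ell_{\alpha_j}$, while each cusp is the conjugacy class of a primitive parabolic normalized to $z\mapsto z+1$. The extended collar $\hat c(\alpha_j)$ is the image of the neighborhood $N_j=\{d(z,\tilde\alpha_j)\le w(\alpha_j)\}$ and the extended cusp region is the image of a horoball $\{\Im z\ge 1/2\}$. Embeddedness of each region and pairwise disjointness of the whole family are equivalent to the single statement that, for distinct members of the $\Gamma$-orbits of the axes $\tilde\alpha_j$ and of the cusp horoballs, the corresponding neighborhoods in $\mathH$ have disjoint interiors; for a fixed axis the cyclic stabilizer generated by $A_j$ produces exactly the quotient cylinder.

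The core, and the main obstacle, is the hyperbolic trigonometric estimate that calibrates the width. Two distinct lifts of simple disjoint geodesics are ultraparallel geodesics in $\mathH$; along their common perpendicular the equidistant tubes $N_i,N_j$ have closest approach equal to the perpendicular distance minus $w(\alpha_i)+w(\alpha_j)$. The relation $\sinh w(\alpha)\sinh\ell_\alpha/2=1$, equivalently $\cosh w(\alpha)=\coth\ell_\alpha/2$, is exactly the threshold at which the half-collar of a single lift first meets a neighboring translate; were two collars (or a collar and a horoball, or two horoballs) to overlap, one could splice the perpendicular segment together with subarcs of the geodesics to produce a geodesic arc realizing a crossing of $\alpha_i$ with $\alpha_j$ or a self-crossing, contradicting simplicity and disjointness. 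Carrying out this case analysis---crossing versus ultraparallel versus asymptotic lifts, together with the horoball-versus-axis and horoball-versus-horoball separations---is precisely the content of the collar theorem of Buser \cite{Busbook}, and I would either invoke it directly or reproduce the right-angled hexagon and horocycle computations. The trigonometric calibration, rather than the topology, is where the real work sits.
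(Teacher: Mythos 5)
Your proposal is correct and matches the paper's treatment: the paper offers no proof of Theorem \ref{collars} at all, stating it as a known result within Buser's presentation \cite[Chap.\ 4]{Busbook}, whose collar theorem contains exactly the two ingredients you describe (completion of a partial system of disjoint simple closed geodesics to a full pants decomposition, plus the trigonometric calibration $\sinh w(\alpha)\sinh\ell_{\alpha}/2=1$ guaranteeing disjointness of the collars and cusp regions). Your identification of the trigonometric separation estimate as the real content, with the topological completion being routine, is exactly where the cited source places the work.
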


On $thin$ the injectivity radius is bounded above and below in terms of the distance into a collar or cusp region.  For a point $p$ of a collar or cusp region, write $\inj(p)$ for the injectivity radius and $\delta(p)$ for the distance to the boundary of the collar or cusp region.  The injectivity radius is bounded as follows, \cite[II, Lemma 2.1]{Wlspeclim}, \cite{Wlcbms}.

\begin{lemma}\textup{Quantitative Collar and Cusp Lemma}
\label{enhcollar}
The product $inj(p)\,e^{\delta(p)}$ of injectivity radius and exponential distance to the boundary is bounded above and below by uniform positive constants.  
\end{lemma}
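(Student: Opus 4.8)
The plan is to reduce the statement to the two explicit model geometries recorded above and then to match the growth of $e^{\delta(p)}$ against the decay of $\inj(p)$ by an elementary computation. First I would recall the uniformizations: a collar $c(\alpha)$ is the quotient of $\{\ell_\alpha\le\arg z\le\pi-\ell_\alpha\}\subset\mathH$ by the core generator $\gamma:z\mapsto e^{\ell_\alpha}z$, and a cusp region is the quotient of $\{\Im z\ge 1\}\subset\mathH$ by $z\mapsto z+1$. By Theorem \ref{collars} these quotients embed and the collars and cusp regions are mutually disjoint; consequently, for an interior point $p$ the shortest geodesic loop is realized by the displayed core generator rather than by any other element of the uniformization group, since other elements carry a lift of $p$ out of the collar or cusp, and the powers $\gamma^{\pm n}$, $n\ge 2$, have strictly larger displacement. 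This is the step that converts $\inj(p)$, which is half the minimal displacement of $\Gamma$ at a lift of $p$, into a closed-form expression.

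Next I would compute the two quantities explicitly. In the cusp, for a point at height $y\ge 1$ half the displacement $d(z,z+1)$ gives $\inj(p)=\operatorname{arcsinh}(1/2y)$, while the vertical geodesic to the horocycle $\{\Im z=1\}$ gives $\delta(p)=\log y$, so $e^{\delta(p)}=y$. In the collar I would pass to Fermi coordinates $(\rho,t)$ about the core geodesic, with $\rho$ the distance to $\alpha$, so that the metric is $d\rho^2+\cosh^2\!\rho\,dt^2$ and $\sin\vartheta=1/\cosh\rho$. The half-displacement of a hyperbolic isometry of translation length $\ell_\alpha$ at distance $\rho$ from its axis gives $\inj(p)=\operatorname{arcsinh}(\cosh\rho\,\sinh(\ell_\alpha/2))=\operatorname{arcsinh}(\sinh(\ell_\alpha/2)/\sin\vartheta)$, and the boundary sits at $\cosh\rho_{\max}=1/\sin\ell_\alpha$, so $\delta(p)=\rho_{\max}-\rho$. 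The clean identities $e^{\rho}=\cot(\vartheta/2)$ and $e^{\rho_{\max}}=\cot(\ell_\alpha/2)$ then give the transparent formula $e^{\delta(p)}=\tan(\vartheta/2)/\tan(\ell_\alpha/2)$.

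Finally I would bound the products using only the concavity of $\operatorname{arcsinh}$. In the cusp, writing $x=1/2y\in(0,1/2]$, the product equals $\operatorname{arcsinh}(x)/(2x)$, which is decreasing and therefore lies in $[\operatorname{arcsinh}(1/2),\,1/2]$. In the collar, the inequalities $\operatorname{arcsinh}(1)\,x\le\operatorname{arcsinh}(x)\le x$ on $[0,1]$, together with the identity $\tan(\vartheta/2)/\sin\vartheta=1/(1+\cos\vartheta)\in(\tfrac12,1]$, reduce the product to a bounded multiple of $\sinh(\ell_\alpha/2)/\tan(\ell_\alpha/2)$, which tends to $1$ as $\ell_\alpha\to 0$; here one first checks $\sinh(\ell_\alpha/2)/\sin\vartheta\le 1$ for $\ell_\alpha$ below a fixed constant, so that $\operatorname{arcsinh}$ is evaluated on $[0,1]$. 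This yields two-sided bounds uniform in the surface and independent of the topological type.

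I expect the main obstacle to be the first step rather than the estimation: one must be certain that $\inj(p)$ is genuinely computed by the core generator uniformly over the collar and cusp, including near their boundaries, and it is exactly the disjointness furnished by Theorem \ref{collars} that guarantees no shorter loop intrudes. Once the closed forms are secured, the boundedness is structural—it reflects the single identity $e^{\rho_{\max}}=\cot(\ell_\alpha/2)\sim 2/\ell_\alpha\sim 1/\inj$ at the core, so that $e^{\delta(p)}$ and $\inj(p)^{-1}$ grow at the same rate throughout the thin region.
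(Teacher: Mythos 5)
Your model computations are correct as formulas, and a self-contained computation of this kind is a reasonable route: the paper itself gives no proof of this lemma, quoting it from \cite[II, Lemma 2.1]{Wlspeclim} and \cite{Wlcbms}. The genuine gap is exactly at the step you yourself flagged as the crux: the claim that the core (resp.\ parabolic) generator computes $\inj(p)$ exactly throughout $c(\alpha)$ (resp.\ the cusp region), justified by ``other elements carry a lift of $p$ out of the collar or cusp.'' That observation only yields, for $A\in\Gamma$ not a power of the core generator $\gamma$, the lower bound $d(\tilde p,A\tilde p)\ge(\mbox{depth of }\tilde p\mbox{ inside the region})$, and this is too weak near the boundary: the depth of $p\in\partial c(\alpha)$ inside $c(\alpha)$ is zero, and even its depth inside the embedded extended collar $\hat c(\alpha)$ is only about $\log 2\approx 0.69$, while the core displacement at $\partial c(\alpha)$ is about $2\operatorname{arcsinh}(1/2)\approx 0.96$. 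The same numerical failure occurs at the cusp boundary, where the parabolic displacement is $2\operatorname{arcsinh}(1/2)$ but the depth inside $\{\Im z\ge 1/2\}$ is $\log 2$. So disjointness, as you invoke it, does not exclude a shorter intruding loop in precisely the regime that matters.

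The repair uses Theorem \ref{collars} in a sharper, doubled form. Since $\hat c(\alpha)$ embeds, the translates $A\hat N$, $A\in\Gamma\slash\langle\gamma\rangle$, of its lift $\hat N=\{d(\cdot,i\mathbb{R}^+)\le w(\alpha)\}$ are pairwise disjoint; for $A\notin\langle\gamma\rangle$ the points $\tilde p$ and $A\tilde p$ lie at the same depth $w(\alpha)-\rho$ inside the disjoint regions $\hat N$ and $A\hat N$, so a path joining them must first exit $\hat N$ and then penetrate $A\hat N$ to that depth, giving
\[
d(\tilde p,A\tilde p)\,\ge\,2\bigl(w(\alpha)-\rho\bigr).
\]
On $c(\alpha)$ one has $\rho\le w_c$ with $\cosh w_c=1/\sin\ell_{\alpha}$; since $2(w(\alpha)-\rho)$ is decreasing in $\rho$ and the core displacement $2\operatorname{arcsinh}(\cosh\rho\sinh(\ell_{\alpha}/2))$ is increasing, it suffices to compare at $\rho=w_c$, where $2(w(\alpha)-w_c)\rightarrow 2\log 2>2\operatorname{arcsinh}(1/2)$ for $\ell_{\alpha}$ below a fixed constant. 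The cusp case is identical with $\{\Im z\ge 1\}\subset\{\Im z\ge 1/2\}$, giving $d(\tilde p,A\tilde p)\ge 2\log(2y)\ge 2\log 2$. This validates your closed forms; alternatively you could settle for two-sided comparability of $\inj(p)$ with the core half-displacement (non-core displacements are bounded below by the uniform constant $2\log 2$, while near the region boundaries both $\inj$ and $e^{\delta}$ are themselves bounded above and below), which also suffices for the lemma. With this step secured, the rest of your estimation goes through as written.
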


\noindent As a general point, we note that the standard consideration for simple closed geodesics and cusp regions generalizes as follows, \cite[Lemma 2.3]{Wlbhv}.

\begin{lemma}
\label{separ}
A simple closed geodesic is either disjoint from the thin region or is the core of an included collar or crosses an included collar.
\end{lemma}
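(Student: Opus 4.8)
The plan is to rule out, one region type at a time, every way a simple closed geodesic $\beta$ could meet the thin region other than appearing as an included collar core or as a transversal crossing. Since the thin region is a disjoint union of collars $c(\alpha)$ and cusp regions, it suffices to treat these two model pieces separately, lift $\beta$ to $\mathbb{H}$, and exploit that a second lift---the image of the first under a generator of the relevant deck group---must be disjoint from it whenever $\beta$ is simple.

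First I would dispose of the cusp regions. A cusp region is covered by $\{\Im z \ge 1\}$ with deck group generated by the parabolic $z \mapsto z+1$. A lift $\tilde\beta$ of $\beta$ is a semicircle of some radius $r$ orthogonal to $\mathbb{R}$, and it reaches $\{\Im z \ge 1\}$ only if $r \ge 1$. Comparing the boundary-endpoint intervals of $\tilde\beta$ and of its translate $\tilde\beta + 1$ shows the two semicircles have linked endpoints precisely when $r > 1/2$, so they cross in $\mathbb{H}$. Since both project to $\beta$, this crossing is a self-intersection of $\beta$, contradicting simplicity. Thus $\beta$ is disjoint from every cusp region, and the only remaining way to meet the thin region is to meet a collar.

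For a collar $c(\alpha)$, covered by the sector $\{\ell_\alpha \le \arg z \le \pi - \ell_\alpha\}$ with core lifting to $i\mathbb{R}^+$ and deck generator $A\colon z \mapsto e^{\ell_\alpha} z$, I would split on whether $\beta$ meets the core. If $\beta$ crosses $\alpha$, a lift $\tilde\beta$ crosses $i\mathbb{R}^+$, so its endpoints straddle $0$; along such a semicircle $\arg z$ increases monotonically from $0$ to $\pi$, whence $\tilde\beta$ sweeps through every ray with $\arg z \in [\ell_\alpha, \pi - \ell_\alpha]$, entering one boundary and leaving by the other, so that $\beta$ crosses the collar. If instead $\beta$ is disjoint from $\alpha$ yet enters $c(\alpha)$, the lift $\tilde\beta$ stays on one side of $i\mathbb{R}^+$, say with endpoints $x_0 \pm r$ and $0 < r < x_0$; reaching $\arg z = \ell_\alpha$ forces $r/x_0 \ge \sin \ell_\alpha$, while linking of the endpoint intervals of $\tilde\beta$ and $A\tilde\beta$---and hence a self-intersection of $\beta$---occurs as soon as $r/x_0 > \tanh(\ell_\alpha/2)$.

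The crux, and the step I expect to require the most care, is closing this last elementary inequality: I must verify $\sin \ell_\alpha > \tanh(\ell_\alpha/2)$ throughout the range of lengths for which $c(\alpha)$ is an included collar, so that entering the collar genuinely forces the self-intersection. The two sides agree to first order with a favorable next term, so the inequality holds for small $\ell_\alpha$, and since the thin region involves only geodesics well below the collar threshold it is comfortable there; I would record an explicit length bound guaranteeing it. Assembling the cases---core, the transversal crossing forced when $\beta$ meets $\alpha$, and the exclusion of both the disjoint-but-entering collar scenario and the cusp-entering scenario---yields the stated trichotomy.
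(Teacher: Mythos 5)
Your proof is correct, and since the paper gives no argument for this lemma itself --- it is quoted from \cite[Lemma 2.3]{Wlbhv} --- the comparison is with the standard argument, which is exactly what you have reconstructed: lift to $\mathbb{H}$, translate by the relevant deck generator, and detect a self-intersection from linked endpoint pairs. Your thresholds are all right: entering $\{\Im z\ge 1\}$ forces $r\ge 1$ while linking with the translate under $z\mapsto z+1$ needs only $r>1/2$; entering the sector covering $c(\alpha)$ forces $r/x_0\ge\sin\lla$, while linking with the image under $z\mapsto e^{\lla}z$ occurs exactly when $r/x_0>\tanh(\lla/2)$; and monotonicity of $\arg z$ along a semicircle whose feet straddle $0$ does hold, so the crossing case is fine. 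The one misstatement is at the step you yourself call the crux: $\sin\lla$ and $\tanh(\lla/2)$ do \emph{not} agree to first order, since $\sin\lla=\lla+O(\lla^3)$ while $\tanh(\lla/2)=\lla/2+O(\lla^3)$; the inequality $\sin\lla>\tanh(\lla/2)$ therefore holds for small $\lla$ by a factor-of-two margin, not by a favorable third-order term (your description fits the sharper comparison of $\sin(\lla/2)$ with $\tanh(\lla/2)$, which is what appears if one works with the extended collar). In fact the cleanest way to close this step avoids Taylor expansion altogether: $\sinh w(\alpha)\sinh(\lla/2)=1$ gives $\operatorname{sech} w(\alpha)=\tanh(\lla/2)$, so the extended collar $\hat c(\alpha)$ is exactly the region $\{\sin \arg z\ge \tanh(\lla/2)\}$, and your linking condition says precisely that the lift penetrates the open extended collar; the crux inequality is then just the strict form of the paper's remark that collars are contained in extended collars, valid for all $\lla\le 2$, say (it first fails near $\lla\approx 2.2$), which comfortably covers any thin-region threshold. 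Finally, two facts you use tacitly deserve a recorded line: lifts of $\beta$ are genuinely semicircles (a lift with an endpoint at $0$ or $\infty$ would share a fixed point with the deck generator, contradicting discreteness unless $\beta=\alpha$), and a transversal crossing of two distinct lifts projects to a genuine self-intersection of $\beta$.
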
 

As a final point, we note for elements of $\mathcal H(R)$ that on a cusp region, magnitude is uniformly bounded in terms of the maximum on the boundary.  A variable for a cusp region is $w=e^{2\pi iz},\, |w|\le e^{-\pi},$ for $z$ in $\mathbb H$ and the cusp represented as above.  
\begin{lemma}\label{cuspbd}
For $w$ the given cusp region variable, a harmonic Beltrami differential is bounded as 
$|\mu|\le|w|((\log|w|)/\pi)^2\max_{|w|=e^{-\pi}}|\mu|$.
\end{lemma}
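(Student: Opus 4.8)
The plan is to reduce the estimate to the maximum modulus principle for a single holomorphic function on the disk, once the hyperbolic metric and the Beltrami differential are written in the cusp coordinate $w$. Recall that an element of $\caH(R)$ has the form $\mu=\overline\varphi(ds^2)^{-1}$ for a quadratic differential $\varphi\in Q(R)$ with at most a simple pole at the puncture. With $w=e^{2\pi i z}$ one has $z=(2\pi i)^{-1}\log w$, so that $\Im z=-(\log|w|)/2\pi$, and the hyperbolic metric $ds^2=(\Im z)^{-2}|dz|^2$ of $\mathH$ descends to
\[
ds^2=\frac{|dw|^2}{|w|^2(\log|w|)^2}.
\]
Writing $\varphi=\varphi(w)\,dw^2$, the pointwise norm of the associated harmonic Beltrami differential is therefore
\[
|\mu|=|\varphi(w)|\,|w|^2(\log|w|)^2.
\]
This is the computation I would carry out first, since it exhibits exactly the two weights appearing in the claimed bound: the metric contributes the factor $(\log|w|)^2$, while the remaining $|w|^2$ is what must be paired against the growth of $\varphi$.

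Next I would package the quadratic differential into a holomorphic function to which the maximum principle applies. Because $\varphi$ has at most a simple pole at $w=0$, the function $h(w)=w^2\varphi(w)$ extends holomorphically across the puncture and vanishes there, $h(0)=0$; moreover $g(w)=h(w)/w=w\varphi(w)$ is holomorphic on the closed disk $\{|w|\le e^{-\pi}\}$. In terms of $h$ the norm becomes $|\mu|=|h(w)|(\log|w|)^2$, and on the reference circle $|w|=e^{-\pi}$, where $(\log|w|)^2=\pi^2$, this gives $\max_{|w|=e^{-\pi}}|\mu|=\pi^2\max_{|w|=e^{-\pi}}|h|$. Verifying that the simple-pole hypothesis on $Q(R)$ is precisely what makes $h$ holomorphic across the cusp, and that it vanishes there, is the structural point that must be recorded before invoking any complex-analytic bound.

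With this in place the estimate is an application of the maximum modulus principle. Bounding $|h|$ by its boundary maximum on $\{|w|\le e^{-\pi}\}$ and substituting into $|\mu|=|h|(\log|w|)^2$ already yields the weight $((\log|w|)/\pi)^2$ multiplying $\max_{|w|=e^{-\pi}}|\mu|$; the further factor $|w|$ in the statement is then supplied by the vanishing $h(0)=0$, via Schwarz's lemma for $g=h/w$ on the disk of radius $e^{-\pi}$, which converts the vanishing at the puncture into linear-in-$|w|$ decay. I expect the only genuine point of care to be the bookkeeping of the logarithmic weight: since $(\log|w|)^2$ is not harmonic, it cannot be estimated together with $|h|$ inside the maximum principle but must be carried outside and matched against its normalization $(\log|w|)^2=\pi^2$ on the reference circle, so that the weight transfers cleanly from $|\mu|$ to $\max_{|w|=e^{-\pi}}|\mu|$. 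Keeping the metric weight separate from the holomorphic factor, and tracking the boundary-radius constant produced by Schwarz's lemma, are the two steps I would write out in full.
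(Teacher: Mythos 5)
Your proposal is correct and takes essentially the same route as the paper: the paper's proof writes $\overline{\mu}=f(w)(dw/w)^2(|w|\log|w|/|dw|)^2$ with $f=w^2\varphi$ holomorphic and vanishing at the origin (your $h$), applies the Schwarz Lemma on the disc $|w|\le e^{-\pi}$, and converts the boundary maximum of $|f|$ into that of $|\mu|$ via $|f|=|\mu|/\pi^2$ on the reference circle --- exactly your three steps. The boundary-radius constant $e^{\pi}$ you flag is real: the paper's own proof likewise produces $|\mu|\le e^{\pi}|w|((\log|w|)/\pi)^2\max_{|w|=e^{-\pi}}|\mu|$, and the stated lemma silently absorbs this factor.
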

\begin{proof}
The hyperbolic metric for a cusp region is $ds^2=(|dw|/|w|\log |w|)^2$.  In a cusp region, a harmonic Beltrami differential $\mu$ is given as  $\overline{\mu}=f(w)(dw/w)^2(|w|\log |w|/|dw|)^2$ for $f(w)$ holomorphic and vanishing at the origin.  Apply the Schwarz Lemma for the disc $|w|\le e^{-\pi}$, to find the inequality $|f|\le e^{\pi}|w|\max_{|w|=e^{-\pi}}|f|$ or equivalently the inequality $|\mu|\le e^{\pi}|w|(\log|w|)^2\max_{|w|=e^{-\pi}}|f|$.  Finally  note the equality $|f|=|\mu|/\pi^2$ on the boundary.  
\end{proof}

\section{Gradients of geodesic-length functions}\label{secgrad}
Geodesic-length functions are a fundamental tool of Teichm\"{u}ller theory.  A point of the Teichm\"{u}ller space $\caT(F)$ represents a Riemann surface $R$ and an isomorphism of $\pi_1(F)$ to the deck transformation group $\pi_1(R)$.  For a free homotopy class $[\alpha]$ of a non trivial, non peripheral closed curve on the reference surface $F$, the geodesic-length $\lla(R)$ is the length of the unique closed geodesic in the corresponding free homotopy class on $R$.   For the uniformization group $\Gamma$ conjugated in $PSL(2;\mathbb R)$ and a representative $\alpha$ having the imaginary axis $\mathcal I$ as a component of its lift with cyclic stabilizer $\Gamma\ssa\subset\Gamma$, we consider a coset sum.
\begin{definition}
Associated to the geodesic $\alpha$ is the coset sum
\[
\Theta\ssa\,=\,\frac{2}{\pi}\sum_{A\in\Gamma\ssa\backslash\Gamma}A^*\big(\frac{dz}{z}\big)^2\,\in\,Q(R)
\]
and the harmonic Beltrami differential $\mua\,=\,\overline{\Theta\ssa}(ds^2)^{-1}\in\mathcal H(R)$.
\end{definition}
The differential $(dz/z)^2$ is invariant under the subgroup of $PSL(2;\mathbb R)$ stabilizing $\mathcal I$ and the summands are independent of choices of coset representatives.  We consider bounds for $\mua$.  For $z=re^{i\theta}\in\mathbb H$, the distance to the imaginary axis is $d(\mathcal I,z)=\log(\csc \theta+|\cot\theta|)$ and consequently the inverse square exponential-distance $e^{-2d(\mathcal I,z)}$ is comparable to the function $\sin^2\theta$.   Our considerations involve the elementary Beltrami differential 
\[
\omega\,=\,\overline{\big(\frac{dz}{z}\big)^2}(ds^2)^{-1}.  
\]
The elementary differential satisfies $|\omega|=\sin^2\theta\le4e^{-2d(\tilde\alpha,z)}$ for $\tilde\alpha$ the imaginary axis.  We recall the basic estimates for $\Theta_{\alpha}$ and $\mu_{\alpha}$, given in \cite[II, Lemmas 2.1 and 2.2]{Wlspeclim}, \cite[Lemma 4.3]{Wlext}. 
      
\begin{proposition}\label{thetprop}
The harmonic Beltrami differential $\mua(p)$ is bounded as $O(\inj(p)^{-1}\lla e^{-d(\alpha,p)})$ for $\inj(p)$ the injectivity radius at $p$ and $d(\alpha,p)$ the distance of $p$ to the geodesic.   On the $c(\alpha)$ collar complement, $\mua$ is bounded as $O(\lla^2)$.   For $c_0$ positive, the remainder term constant is uniform in the surface $R$ and independent of the topological type for $\lla\le c_0$.
\end{proposition}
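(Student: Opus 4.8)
The plan is to reduce the pointwise bound on $\mua$ to a single inverse-square exponential-distance sum over the lifts of $\alpha$, and then to control that sum by the collar geometry. Since the hyperbolic metric $ds^2$ is $\Gamma$-invariant and each $A\in\Gamma$ is holomorphic, conjugation commutes with pullback and $A^*(ds^2)^{-1}=(ds^2)^{-1}$, so that $\mua=\frac{2}{\pi}\sum_{A\in\Gamma\ssa\backslash\Gamma}A^*\omega$ for the elementary differential $\omega=\overline{(dz/z)^2}(ds^2)^{-1}$. A Beltrami differential has $\Gamma$-equivariant modulus, so $|A^*\omega(z)|=|\omega(Az)|$, and the recorded bound $|\omega|=\sin^2\theta\le 4e^{-2d(\tilde\alpha,z)}$ gives
\[
|\mua(p)|\ \le\ \frac{8}{\pi}\sum_{A\in\Gamma\ssa\backslash\Gamma}e^{-2d(\mathcal{I},Az)}\ =\ \frac{8}{\pi}\sum_{\tilde\alpha}e^{-2d(\tilde\alpha,z)},
\]
where $z$ is a lift of $p$ and the final sum runs over the lifts $\tilde\alpha=A^{-1}\mathcal{I}$ of $\alpha$ (a well defined bijection with $\Gamma\ssa\backslash\Gamma$ since $\Gamma\ssa$ is the $\mathcal{I}$-stabilizer). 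Everything now concerns the nonnegative $\Gamma$-automorphic sum $\Sigma(z)=\sum_{\tilde\alpha}e^{-2d(\tilde\alpha,z)}$.

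Next I would isolate the nearest lift $\tilde\alpha_0$, which realizes $d(\tilde\alpha_0,z)=d(\alpha,p)$ and contributes the main term $e^{-2d(\alpha,p)}$, and show the remaining lifts add at most a uniform multiple of it. Because $\alpha$ is simple it carries an embedded collar, so its lifts are pairwise disjoint geodesics in $\mathbb{H}$, separated by bands of definite hyperbolic width, and the translates of the collar about the distinct lifts are disjoint. Writing $N(t)=\#\{\tilde\alpha:d(\tilde\alpha,z)\le t\}$, this separation together with the $e^{t}$ growth of hyperbolic area bounds $N(t)$, and $\Sigma(z)=2\int_0^\infty e^{-2t}N(t)\,dt$ is then dominated by a geometric tail, uniformly in the surface and the topological type. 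Equivalently, and this is where potential theory enters, one may bound $\Sigma$ by averaging: $|\Theta_\alpha|(ds^2)^{-1}$ is the modulus of a holomorphic quadratic differential and hence obeys a sub-mean-value inequality over hyperbolic balls of fixed radius; unfolding the ball average against the coset sum converts it into an integral of $|\omega|$ over a neighborhood of the core in the collar, which evaluates to the asserted size. Establishing this uniform control of $\Sigma(z)$ — in particular that the accumulation of far lifts never overwhelms the nearest-lift term, independently of how the thin part sits in the surface — is the main obstacle; once it is in hand one has $\Sigma(z)=O(e^{-2d(\alpha,p)})$.

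Finally I would convert $\Sigma(z)=O(e^{-2d(\alpha,p)})$ into the stated form using the Quantitative Collar and Cusp Lemma \ref{enhcollar}. Inside the collar $c(\alpha)$ the distance to the boundary is $\delta(p)=w(\alpha)-d(\alpha,p)$ with $e^{w(\alpha)}\asymp\lla^{-1}$, so $\inj(p)\,e^{\delta(p)}\asymp 1$ yields $\inj(p)\asymp\lla\,e^{d(\alpha,p)}$ and therefore $e^{-2d(\alpha,p)}\asymp\inj(p)^{-1}\lla\,e^{-d(\alpha,p)}$, giving the first bound. Off the collar the first bound persists because $\inj$ is $1$-Lipschitz and of size $\asymp 1$ on the collar boundary, so $\inj(p)\lesssim\lla\,e^{d(\alpha,p)}$ everywhere and hence $e^{-2d(\alpha,p)}\lesssim\inj(p)^{-1}\lla\,e^{-d(\alpha,p)}$ throughout. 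For the collar complement I would use $d(\alpha,p)\ge w(\alpha)$, so $e^{-d(\alpha,p)}\le e^{-w(\alpha)}\asymp\lla$, whence $|\mua(p)|=O(e^{-2d(\alpha,p)})=O(\lla^2)$; that the far thin parts do not spoil this uses Theorem \ref{collars} to keep the collar of $\alpha$ disjoint from the remaining collars and cusp regions, so that the sum estimate remains uniform. The constants produced this way depend only on the fixed radius and the curvature normalization, which gives uniformity in $R$ and independence of topological type for $\lla\le c_0$.
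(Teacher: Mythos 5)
Your first step (reducing to the coset sum $\Sigma(z)=\sum_{A\in\Gamma\ssa\backslash\Gamma}e^{-2d(A^{-1}\mathcal I,z)}$) and your final conversion via Lemma \ref{enhcollar} are sound, but the pivotal middle claim is false: there is no uniform constant with $\Sigma(z)\le Ce^{-2d(\alpha,p)}$, and the pointwise bound $|\mua(p)|=O(e^{-2d(\alpha,p)})$ you derive from it does not hold uniformly in $R$. The failure occurs exactly in the thin components other than $c(\alpha)$, which is precisely why the factor $\inj(p)^{-1}$ appears in the Proposition and cannot be removed. Concretely, let $\beta$ be a second short simple geodesic disjoint from $\alpha$ and let $p\in\beta$ be the point nearest to $\alpha$, so $d(\alpha,p)=\ell_{\widehat{\alpha\beta}}$. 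By the analogue of Corollary \ref{gradexp} on $c(\beta)$, at the core $\theta\ssb=\pi/2$ one has $\mua=\grad\lla=a\ssa(\beta)+O((\lla/\llb)^2e^{-\pi^2/\llb})$ with $a\ssa(\beta)=\langle\grad\lla,\grad\llb\rangle/\llb$, and Theorem \ref{gradpair} gives $\langle\grad\lla,\grad\llb\rangle\ge Ce^{-2\ell_{\widehat{\alpha\beta}}}$ with $C$ uniform. Hence $|\mua(p)|\gtrsim e^{-2d(\alpha,p)}/\llb$, which exceeds any fixed multiple of $e^{-2d(\alpha,p)}$ as $\llb\to0$: the $\asymp\inj(p)^{-1}$ lifts of $\alpha$ at essentially the minimal distance (translates by the short hyperbolic element, or by the parabolic in a cusp region) contribute coherently, so nearest-lift domination fails. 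Your own counting argument already signals this: disjointness of the bands around the lifts only gives $N(t)\lesssim e^{t+w(\alpha)}\approx e^{t}/\lla$, so $2\int_0^\infty e^{-2t}N(t)\,dt$ yields $\Sigma(z)=O(\lla^{-1}e^{-d(\alpha,p)})$, which on the collar complement gives only $O(1)$, far from $O(\lla^2)$.

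The paper's proof keeps exactly the quantity you discard. The mean value inequality is applied to each term $|A^*\omega|$ separately, giving $\sum_A|A^*\omega(p)|\le c_\epsilon\sum_A\int_{B(Ap;\epsilon)}|\omega|\,dA$; replacing the sum of ball integrals by the integral over their union costs the overlap multiplicity, bounded by $\inj(p)^{-1}$ --- this is the source of the $\inj(p)^{-1}$ in the statement, not an artifact. The union lies in the half-annulus $\mathcal A\ssa=\{1\le|z|<e^{\lla}\}$ (a fundamental domain for $\Gamma\ssa$) intersected with the exterior sector $\{d(z,\mathcal I)\ge d(p,\alpha)\}$, and since $|\omega|\,dA=dr\,d\theta/r$ that integral is $O(\lla e^{-d(\alpha,p)})$; so your ``unfolding'' fallback, done correctly, produces the paper's bound $\inj(p)^{-1}\lla e^{-d(\alpha,p)}$ rather than $e^{-2d(\alpha,p)}$. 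The second conclusion is then not the one-line consequence of $d(\alpha,p)\ge w(\alpha)$ that you propose: for $p$ in a cusp region or in a collar $c(\beta)\ne c(\alpha)$, where $\inj(p)^{-1}$ is large, the paper uses that the geodesic from $p$ to $\alpha$ must first exit that region and then cross the $\alpha$-collar, so that by Lemma \ref{enhcollar} the product $\inj(p)^{-1}e^{-d(\alpha,p)}$ is $O(\lla)$, whence $\inj(p)^{-1}\lla e^{-d(\alpha,p)}=O(\lla^2)$. Your appeal to Theorem \ref{collars} (disjointness of collars) does not address this accumulation of lifts, so the proposal as written has a genuine gap and in fact asserts intermediate bounds that are false.
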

\begin{proof}We use the distant sum estimate \cite[Chapter 8]{Wlcbms}.  By a mean value estimate the individual terms of the sum $\Theta_{\alpha}$ are bounded by integrals over corresponding balls. The sum of integrals over balls is bounded by the integral over a region containing the balls.  We elaborate.  Harmonic Beltrami differentials and the elementary differential satisfy a mean value estimate on $\mathbb H$: given $\epsilon >0$, $|\omega(p)|\le c_{\epsilon}\int_{B(p;\epsilon)}|\omega|dA$ for the hyperbolic area element.  The orbit $Ap\in \mathbb H,\ A\in\Gamma\ssa\backslash\Gamma$, is necessarily contained in the exterior sector $\{z\mid d(z,\mathcal I)\ge d(p,\alpha)\}\subset\mathbb H$.  The sum is consequently bounded as follows
\[
\sum_{A\in\Gamma\ssa\backslash\Gamma}|A^*\omega|\,\le\,\sum_{A\in\Gamma\ssa\backslash\Gamma}c_{\epsilon}\int_{B(p;\epsilon)}|\omega|dA\,\le\,c_{\epsilon}\inj(p)^{-1}\int_{\cup_{A\in\Gamma\ssa\backslash\Gamma}B(Ap;\epsilon)}|\omega|dA
\]
where the reciprocal injectivity radius bounds the count of overlapping balls in $\mathbb H$.   From the equality $|\omega|=\sin^2\theta$, the final integrand in polar coordinates is $drd\theta/r$.   The region of integration is represented in the half annulus $\mathcal A\ssa=\{1\le|z|<e^{\lla}\}$, since the union is for $\Gamma\ssa$-cosets.  The region of integration is  contained in the exterior sector $\{z\mid d(z,\mathcal I)\ge d(p,\alpha)\}$.  The distance inequality is equivalent to the inequality $\theta,\pi-\theta\le\theta(p)$, where $e^{d(p,\alpha)}=\csc\theta(p)\,+\,\cot\theta(p)$.  The integral of $drd\theta/r$ over $\theta,\pi-\theta\le \theta(p), 1\le r<e^{\ell_{\alpha}}$ is bounded by a constant multiple of $\lla e^{-d(\alpha,p)}$, giving the first conclusion.  The injectivity radius is small on cusp regions and  collars of a hyperbolic surface.  For $p$ in a cusp region or a collar other than $c(\alpha)$, the initial segment of a geodesic from $p$ to $\alpha$ exits the cusp region or second collar and the final segment of the geodesic crosses the $\alpha$ collar to connect to $\alpha$.  By the Quantitative Collar and Cusp Lemma, the product $\inj(p)^{-1} e^{-d(\alpha,p)}$ is bounded by $\lla$, providing the second conclusion.      
\end{proof}  

Our considerations involve carefully analyzing holomorphic quadratic differentials on collars.   The Laurent series expansion provides an approach.  For the closed geodesic $\alpha$ having the imaginary axis as a component of its lift, the extended collar $\hat c(\alpha)$ is covered 
by $\{\lla/2\le\arg z\le\pi-\lla/2\}$.  The following compares to results on decompositions of differentials on annuli developed in \cite[Section 4]{HSS}.     
\begin{proposition}\label{CSest}  Given the holomorphic quadratic differential $\phi\in Q(R)$,  the coordinate $z=re^{i\theta}\in\mathbb H$, and the elementary Beltrami differential $\omega$, then on the sector $\lla\le\theta\le\pi-\lla$,
\[
\overline{\phi}(ds^2)^{-1}=a(\alpha)\omega +O\big((e^{-2\pi\theta/\lla}+e^{2\pi(\theta-\pi)/\lla})\ell^{-2}_{\alpha}\sin^2\theta\,M\big)
\]
for a coefficient $a(\alpha)$ and $M=\max_{|z|=\lla/2,\pi-\lla/2}|\overline{\phi}(ds^2)^{-1}|$.  For $c_0$ positive, the remainder term is uniform in the surface  $R$ for $\lla\le c_0$.
\end{proposition}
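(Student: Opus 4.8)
The plan is to exploit the cyclic symmetry of the collar to reduce $\phi$ to a single periodic holomorphic function and then read off the claimed expansion from its Laurent series. Since the quotient $\{\lla/2\le\arg z\le\pi-\lla/2\}/\langle z\mapsto e^{\lla}z\rangle$ embeds in $R$, the pullback of $\phi$ to the covering sector is invariant under $z\mapsto e^{\lla}z$. Writing $\phi=\psi(z)(dz)^2$ with $\psi$ holomorphic and passing to the logarithmic coordinate $u=\log z=\log r+i\theta$, invariance of $\phi$ becomes ordinary $\lla$-periodicity (in the $\log r$ direction) of the holomorphic function $g(u)=\psi(z)z^2$ determined by $\phi=g(u)(du)^2$. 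Because $(dz/z)^2=(du)^2$, the elementary differential is $\omega=\overline{(du)^2}(ds^2)^{-1}$, whence $\overline{\phi}(ds^2)^{-1}=\overline{g(u)}\,\omega$; the entire statement thus reduces to an expansion of the scalar periodic function $g$.

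First I would set $\tau=e^{2\pi iu/\lla}$, so that $g(u)=\sum_{n\in\mathZ}c_n\tau^n$ is a genuine Laurent series on the annulus swept out as $\theta$ varies across the collar, with $|\tau|=e^{-2\pi\theta/\lla}$. The zero-frequency term contributes $\overline{c_0}\,\omega$, so I would take $a(\alpha)=\overline{c_0}$; the error is then $(\overline{g}-\overline{c_0})\,\omega$, of magnitude $\big|\sum_{n\ne0}c_n\tau^n\big|\sin^2\theta$. The two boundary rays $\arg z=\lla/2$ and $\arg z=\pi-\lla/2$ correspond to the circles $|\tau|=e^{-\pi}$ (outer) and $|\tau|=e^{-\pi(2\pi-\lla)/\lla}$ (inner). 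Cauchy's estimate on the outer circle bounds the positive-index coefficients by the outer boundary maximum of $|g|$, and the inner circle bounds the negative-index coefficients by the inner maximum, each with the geometric decay in $|n|$ needed for the two one-sided sums.

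Next I would convert these maxima of $|g|$ into the stated quantity $M$. On each boundary ray $|\omega|=\sin^2\theta=\sin^2(\lla/2)\ge c\,\lla^2$ for $\lla\le c_0$, so $\max|g|\le M/\sin^2(\lla/2)=O(\lla^{-2}M)$ there; this is precisely the source of the $\lld^{-2}$, i.e.\ the $\ell^{-2}_{\alpha}$, factor. Summing the two geometric series at a point with $|\tau|=e^{-2\pi\theta/\lla}$, the positive frequencies contribute a multiple of $e^{-2\pi\theta/\lla}$ and the negative frequencies a multiple of $e^{2\pi(\theta-\pi)/\lla}$; multiplying by $\sin^2\theta$ then yields exactly the asserted remainder. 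The ratios governing the two series are $e^{\pi}e^{-2\pi\theta/\lla}$ and $e^{\pi}e^{2\pi(\theta-\pi)/\lla}$, and the passage from the extended collar $\lla/2\le\theta\le\pi-\lla/2$ to the sector $\lla\le\theta\le\pi-\lla$ is what forces both ratios below $e^{-\pi}<1$, so the series converge with uniformly bounded sums.

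The main obstacle is organizing the two-sided estimate cleanly: one must use the outer circle for $n\ge1$ and the inner circle for $n\le-1$ to get decay at both ends, and then check that the convergence margin persists uniformly as $\lla\to0$. This is exactly where the half-collar shrinkage of the domain (from $\arg z=\lla/2$ to $\arg z=\lla$) is indispensable, and where I would be careful that all constants depend only on $c_0$ and not on the surface $R$ or its topological type, matching the uniformity claim. The comparison $\sin^2(\lla/2)\ge c\,\lla^2$ and the summation of the geometric series are otherwise routine.
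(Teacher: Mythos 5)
Your proof is correct and follows essentially the same route as the paper: your coordinate $\tau=e^{2\pi iu/\lla}$ is exactly the paper's uniformizing variable $w=\exp(2\pi i\log z/\lla)$, and both arguments expand $\phi$ in a Laurent series on the resulting annulus, take the zero mode as $a(\alpha)\omega$, and bound the tail by boundary maxima (converted to $M$ via the $\lla^{-2}$ metric factor) with the exponential decay coming from the gap between the extended collar and the sector $\lla\le\theta\le\pi-\lla$. The only cosmetic differences are that you sum coefficient-wise Cauchy estimates into geometric series where the paper invokes the Cauchy integral plus the Schwarz Lemma, and you treat the negative frequencies directly on the inner circle where the paper uses an inversion symmetry.
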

\begin{proof}
The uniformizing map $w=exp(2\pi i\log z/\lla)$ represents $\mathbb H\slash \Gamma\ssa$ as the annulus $\{e^{-2\pi^2/\lla}<|w|<1\}$.  The quadratic differential $\phi$ has a Laurent series expansion
\[
\phi\,=\,\big(f_+(w)+a_0+f_-(w)\big)\big(\frac{1}{w^2}\big)=\sum_{n=-\infty}^{\infty}a_nw^{n-2}.
\] 
The function $f_+(w)+a_0$ is holomorphic in $|w|<1$ and is given by the Cauchy Integral of $w^2\phi$ on a circle $|w|=e^{-\pi}$ ($\arg z=-\lla/2$).  The main coefficient $a_0$ is the average of $w^2\phi$ on the circle.   The function $f_-(w)$ is holomorphic in $|w|>e^{-2\pi^2/\lla}$ and is given by the Cauchy Integral of $w^2\phi$ on the circle $|w|=e^{\pi-2\pi^2/\lla}$    

By the elementary estimate for the Cauchy Integral and the Schwarz Lemma for $|w|\le e^{-2\pi}$, it follows that $|f_+(w)|\le C|w|\, \max_{|w|=e^{-\pi}}|w^2\phi|$ for $|w|\le e^{-2\pi}$ and a universal constant.   The factor $|w|^2$ can be absorbed into the constant.  Next we consider the hyperbolic metric on the annulus.  The metric for the annulus $\mathbb H\slash\Gamma\ssa$ is
$ds^2\,=\,((\lla/2\pi)\csc (\lla/2\pi\log |w|)\,|dw/w|)^2$ which is approximately $(e^{\pi}|dw|/\pi)^2$ for $|w|=e^{-\pi}$. We combine the considerations for $f_+$ and the hyperbolic metric to write the final estimate $|f_+(w)|\le C|w|\,\max_{|w|=e^{-\pi}}|\overline{\phi}(ds^2)^{-1}|$ for $|w|\le e^{-2\pi}$.  

The estimate for $f_-$ follows by a symmetry consideration. The inversion $w\rightarrow 1/we^{2\pi^2/\lla}$ interchanges the boundaries of the annulus, interchanges the functions $f_+,f_-$ and replaces the angle $\theta=\arg z$ with the angle $\pi-\theta$.  The final estimate is $|f_-(w)|\le C |we^{2\pi^2/\lla}|\,\max_{|w|=e^{\pi-2\pi^2/\lla}}|\overline{\phi}(ds^2)^{-1}|$ for $|w|\ge e^{2\pi-2\pi^2/\lla}$ follows.  Each inequality is multiplied by the absolute value of the elementary differential $(\overline{dw/w})^2(ds^2)^{-1}=-(2\pi/\ell_{\alpha})^2\omega$, where $|\omega|=\sin^2\theta$, to obtain the desired intrinsic statement.    
\end{proof}

We collect prior results to provide a description of pairings with the geodesic-length differentials $d\lla$ and gradients $\grad\lla$ \cite{Gardtheta,Rier,Wlbhv}.  An exposition of prior results is provided in Chapters 3 and 8 of \cite{Wlcbms}.  
  
\begin{theorem}\label{gradpair}\textup{Geodesic-length gradients and their pairings.}  For a closed geodesic $\alpha$ and harmonic Beltrami differential $\mu\in\mathcal H(R)$, then
\[
\Re (\mu,d\lla)\,=\,\Re\int_R\mu\Theta\ssa\,=\,\Re\, \langle\mu,\mua\rangle.
\]
For the geodesic $\alpha$ with the standard representation in $\mathbb H$ and a holomorphic quadratic differential $\phi\in Q(R)$ with Laurent type expansion
$\phi=\sum a_n exp(2\pi in\log z/\lla)\,(dz/z)^2$ on $\mathbb H$, the main coefficient $a_0$ is given by $(\mua,\phi)=\langle\mua,\overline\phi(ds^2)^{-1}\rangle = a_0(\phi)\lla$.  The WP Hermitian pairing of geodesic-length gradients for simple geodesics $\alpha,\beta$, coinciding or disjoint, is positive real-valued and satisfies 
\[
Ce^{-2\ell_{\widehat{\alpha\beta}}} \le\langle \grad\lla,\grad\llb\rangle-\frac{2}{\pi}\lla\delta_{\alpha\beta}\le C'\lla^2\llb^2
\] 
for the Dirac delta $\delta_*$, and $\ell_{\widehat{\alpha\beta}}$ the length of the shortest non trivial geodesic segment connecting $\alpha$ to $\beta$.  For $c_0$ positive, the positive constants $C,C'$ are uniform in the surface  $R$ and independent of the topological type for $\lla,\llb\le c_0$.
\end{theorem}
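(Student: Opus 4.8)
The plan is to take the three assertions in order, the first two being respectively a bookkeeping identity and a residue computation, while the third carries the analytic content. For the first identity I would invoke Gardiner's first-variation formula \cite{Gardtheta}, which represents the differential of geodesic-length by the relative Poincar\'e series: for a harmonic Beltrami differential $\mu$, the pairing $\Re(\mu,d\lla)$ equals $\Re\int_R\mu\Theta\ssa$, the normalization $2/\pi$ in the definition of $\Theta\ssa$ being chosen precisely so that no further constant intervenes. The remaining equality is pure unwinding: since $\mua=\overline{\Theta\ssa}(ds^2)^{-1}$ and $ds^2$ is real, one has $\overline{\mua}=\Theta\ssa(ds^2)^{-1}$, whence $\langle\mu,\mua\rangle=\int_R\mu\,\overline{\mua}\,dA=\int_R\mu\Theta\ssa=(\mu,\Theta\ssa)$, using that $(ds^2)^{-1}dA$ reduces the Hermitian pairing to the holomorphic tangent-cotangent pairing. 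Taking real parts gives the statement.

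For the reproducing property I would unfold. Writing $\mua=\frac{2}{\pi}\sum_{A\in\Gamma\ssa\backslash\Gamma}A^*\omega$ and using that $\phi$ and $ds^2$ descend to $R$, the coset sum unfolds the integral over $R$ to one over the annulus $\Gamma\ssa\backslash\mathH$, with fundamental domain $\{1\le|z|<e^{\lla}\}$:
\[
(\mua,\phi)=\frac{2}{\pi}\int_{\Gamma\ssa\backslash\mathH}\omega\,\phi.
\]
Substituting the Laurent--Fourier expansion $\phi=\sum_n a_n\exp(2\pi in\log z/\lla)(dz/z)^2$ and passing to polar coordinates $z=re^{i\theta}$, the integral factors into a radial integral $\int_0^{\lla}\exp(2\pi inu/\lla)\,du=\lla\,\delta_{n,0}$ (with $u=\log r$), which annihilates every mode but $n=0$, and an angular integral $\int_0^{\pi}\sin^2\theta\,d\theta=\pi/2$, which cancels the prefactor $2/\pi$; the outcome is exactly $a_0\lla$. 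The second equality is the same unwinding as in the first part.

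For the third statement I would first combine the preceding two. The first identity identifies the WP gradient of $\lla$ with $\mua$, so that $\langle\grad\lla,\grad\llb\rangle=\langle\mua,\mub\rangle=(\mua,\Theta\ssb)$, and the reproducing property with $\phi=\Theta\ssb$ yields the exact value $\langle\mua,\mub\rangle=a_0(\Theta\ssb)\lla$, where $a_0(\Theta\ssb)$ is the main coefficient of $\Theta\ssb$ in the collar coordinate of $\alpha$. Isolating the identity double coset, whose contribution to $a_0(\Theta\ssa)$ is $2/\pi$, exhibits the leading term $\frac{2}{\pi}\lla\delta_{\alpha\beta}$. To control the remainder two-sidedly I would pass to Riera's expansion \cite{Rier} of the pairing as a sum over the double cosets $\Gamma\ssa\backslash\Gamma/\Gamma\ssb$ of the explicit terms $u\log\frac{u+1}{u-1}-2$, where $u=u(A)$ is the separation invariant of the axis of $\alpha$ and the $A$-translate of the axis of $\beta$. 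Since $u\log\frac{u+1}{u-1}-2>0$ for $u>1$, every term is positive: the pairing is positive real-valued and already dominates $\frac{2}{\pi}\lla\delta_{\alpha\beta}$.

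The quantitative bounds then come from analyzing this double-coset sum with the exponential-distance (distant-sum) estimate used in the proof of Proposition \ref{thetprop}. Each summand behaves like $\frac{2}{3}u^{-2}+O(u^{-4})$ with $u^{-1}$ comparable to $e^{-d}$, $d$ the distance between the two axes; retaining only the double coset that realizes the shortest connecting segment $\ell_{\widehat{\alpha\beta}}$ gives the lower bound $Ce^{-2\ell_{\widehat{\alpha\beta}}}$. For the upper bound I would use the Quantitative Collar and Cusp Lemma and Theorem \ref{collars} to observe that any arc joining $\alpha$ to $\beta$ crosses both collars, so every contributing distance exceeds $w(\alpha)+w(\beta)=\log(4/\lla)+\log(4/\llb)+O(\lla^2+\llb^2)$; hence $e^{-2d}\lesssim\lla^2\llb^2$ for each term, and summing the geometric tail over the remaining cosets by the distant-sum estimate keeps the total $O(\lla^2\llb^2)$, uniformly in $R$ and the topological type. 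I expect the main obstacle to be exactly this last step: the clean two-sided control of the double-coset sum, namely translating the collar geometry into a uniform exponential-distance bound that simultaneously isolates the dominant term for the lower bound and sums the full tail for the upper bound.
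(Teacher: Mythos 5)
Your proposal is correct and follows essentially the same route as the paper's (sketched) proof: Gardiner's formulas for the differential and the reproducing property of the main coefficient, Riera's double-coset formula to obtain a positive real-valued sum, the first term of that sum for the lower bound $Ce^{-2\ell_{\widehat{\alpha\beta}}}$, and the distant-sum estimate combined with collar separation for the $O(\lla^2\llb^2)$ upper bound. The only cosmetic difference is that the paper cites the twist-length cosine formula for reality of the pairing, whereas you read reality and positivity directly off Riera's expansion; your explicit unfolding computation of $(\mua,\phi)=a_0(\phi)\lla$ and the collar-width bookkeeping simply fill in details the paper delegates to its references.
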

\begin{proof}We only sketch the considerations.  The statements about the differential and gradient are Gardiner's formulas \cite{Gardtheta}.  The remaining statements regard gradient pairings.  The Fenchel-Nielsen twist about $\alpha$ is represented by the harmonic Beltrami differential $i/2\grad\lla$.  The twist-length cosine formula provides that the pairing is real \cite{Wlcbms}.  
The Riera formula expresses the pairing of gradients as a positive infinite sum $\sum_{\Gamma\ssa\backslash\Gamma\slash\Gamma\ssb}e^{-2d}$, for the square inverse exponential-distance between components of the lifts of $\alpha$ and $\beta$ modulo the action of $\Gamma$ \cite{Rier}.  The expansion for the pairing is obtained by applying a form of the distant sum estimate \cite{Wlbhv}.  The lower bound follows by considering the first term of the sum. 
\end{proof}

\begin{corollary}\textup{Expansions of geodesic-length gradients on collars.}\label{gradexp}  Given simple geodesics $\alpha,\beta$, on a fundamental domain containing the collar $c(\alpha)$ with the representation $\{\lla\le \arg z=\theta\le \pi-\lla\}$ in $\mathbb H$, then
\begin{equation}\label{grada}
\grad\lla\,=\,a\ssa(\alpha)\omega\,+\,O((e^{-2\pi\theta/\lla}+e^{2\pi(\pi-\theta)/\lla})\sin^2\theta)
\end{equation} for the main coefficient $a\ssa(\alpha)=\frac{2}{\pi}+O(\lla^3)$, and for $\beta$ 
disjoint from $\alpha$
\begin{equation}\label{gradab}
\grad\llb\,=\,a\ssb(\alpha)\omega\,+\,O((\llb/\lla)^2(e^{-2\pi\theta/\lla}+e^{2\pi(\pi-\theta)/\lla})\sin^2\theta)
\end{equation}
for the main coefficient $a\ssb(\alpha)=O(\lla\llb^2)$.  The main coefficients $a\ssb(\alpha)=\langle\grad\llb,\grad\lla\rangle/\lla$ are positive real-valued.   For $c_0$ positive, the remainder term is uniform in the surface $R$ and independent of the topological type for $\lla,\llb\le c_0$.
\end{corollary}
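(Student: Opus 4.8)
The plan is to read both expansions directly off Proposition~\ref{CSest}, applied to the quadratic differentials $\Theta_\alpha$ and $\Theta_\beta$, and then to evaluate the constant coefficients through the pairing formulas of Theorem~\ref{gradpair}. The key starting observation is that the gradient, regarded as a tangent vector, is exactly the harmonic Beltrami differential $\grad\ell_\alpha=\mu_\alpha=\overline{\Theta_\alpha}(ds^2)^{-1}$, which is precisely of the form $\overline{\phi}(ds^2)^{-1}$ to which Proposition~\ref{CSest} applies. Writing that proposition with $\phi=\Theta_\alpha$ on the collar $c(\alpha)$ yields $\grad\ell_\alpha=a(\alpha)\omega+O\big((\dots)\ell_\alpha^{-2}\sin^2\theta\,M_\alpha\big)$, and with $\phi=\Theta_\beta$ it yields the corresponding expansion for $\grad\ell_\beta$. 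The proof then reduces to two tasks: estimating the boundary maxima $M_\alpha,M_\beta$, and identifying the main coefficients.

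For the boundary maxima I would use Proposition~\ref{thetprop}. The remainder constant $M$ in Proposition~\ref{CSest} is the maximum of $|\overline{\phi}(ds^2)^{-1}|$ over the two arcs $\arg z=\ell_\alpha/2,\ \pi-\ell_\alpha/2$ bounding the extended collar $\hat c(\alpha)$; these arcs lie strictly outside $c(\alpha)$, hence in its collar complement, so Proposition~\ref{thetprop} gives $M_\alpha=O(\ell_\alpha^2)$. Feeding this into the remainder turns $\ell_\alpha^{-2}\sin^2\theta\,M_\alpha$ into $\sin^2\theta$, which is exactly the remainder of~(\ref{grada}). For $\beta$ disjoint from $\alpha$, Theorem~\ref{collars} provides that $\hat c(\alpha)$ and $\hat c(\beta)$ are disjoint, so the bounding arcs of $\hat c(\alpha)$ again lie in the collar complement of $c(\beta)$, whence $M_\beta=O(\ell_\beta^2)$ and the remainder factor becomes $\ell_\alpha^{-2}\ell_\beta^2\sin^2\theta=(\ell_\beta/\ell_\alpha)^2\sin^2\theta$, matching~(\ref{gradab}).

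For the coefficients I would invoke the identity $(\mu_\alpha,\phi)=\langle\mu_\alpha,\overline{\phi}(ds^2)^{-1}\rangle=a_0(\phi)\ell_\alpha$ of Theorem~\ref{gradpair}, which says precisely that the constant Laurent coefficient of $\phi$ about $\alpha$ is $\langle\mu_\alpha,\overline{\phi}(ds^2)^{-1}\rangle/\ell_\alpha$. Taking $\phi=\Theta_\alpha$ gives $a_\alpha(\alpha)=\langle\grad\ell_\alpha,\grad\ell_\alpha\rangle/\ell_\alpha$, and $\phi=\Theta_\beta$ gives $a_\beta(\alpha)=\langle\grad\ell_\beta,\grad\ell_\alpha\rangle/\ell_\alpha$; reality and positivity of both coefficients are immediate from the positivity of the gradient pairing in Theorem~\ref{gradpair}. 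The asymptotic values are then read from the same theorem's pairing bounds. For $\alpha=\beta$ both the lower bound $Ce^{-2\ell_{\widehat{\alpha\alpha}}}$ and the upper bound $C'\ell_\alpha^4$ are $O(\ell_\alpha^4)$, since a shortest self-returning segment must cross the collar and so has length comparable to $2w(\alpha)$; hence $\langle\grad\ell_\alpha,\grad\ell_\alpha\rangle=\tfrac{2}{\pi}\ell_\alpha+O(\ell_\alpha^4)$ and $a_\alpha(\alpha)=\tfrac{2}{\pi}+O(\ell_\alpha^3)$. For $\alpha\neq\beta$ the bound $\langle\grad\ell_\beta,\grad\ell_\alpha\rangle\le C'\ell_\alpha^2\ell_\beta^2$ gives $a_\beta(\alpha)=O(\ell_\alpha\ell_\beta^2)$.

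The main obstacle I anticipate is bookkeeping rather than conceptual: carefully transcribing the remainder exponents of Proposition~\ref{CSest} into the present coordinates, and in particular verifying that the bounding arcs of $\hat c(\alpha)$ genuinely sit in the collar complement where the $O(\ell_\alpha^2)$ and $O(\ell_\beta^2)$ bounds of Proposition~\ref{thetprop} hold. The disjointness of extended collars from Theorem~\ref{collars} is what makes the estimate $M_\beta=O(\ell_\beta^2)$ available, and it is the single place where the hypothesis that $\beta$ is disjoint from $\alpha$ is used. Finally, uniformity in the surface and independence of topological type are inherited term-by-term, since each of the three quoted results carries the same uniformity for $\ell_\alpha,\ell_\beta\le c_0$.
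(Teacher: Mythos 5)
Your proposal is correct and takes essentially the same route as the paper: the paper's proof is precisely the combination of Proposition~\ref{CSest} and Theorem~\ref{gradpair}, with the boundary maximum $M$ controlled by the collar-complement bound $O(\ell_*^2)$ of Proposition~\ref{thetprop} (equivalently, the observation that $\sin\theta/\ell_\alpha$ has unit magnitude on the collar boundary). Your explicit appeal to Theorem~\ref{collars} to place the arcs bounding $\hat c(\alpha)$ in the complement of $c(\beta)$, and your identification of the coefficients via $(\mu_\alpha,\phi)=a_0(\phi)\ell_\alpha$, simply fill in the details the paper leaves implicit.
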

\begin{proof}  
The expansions are obtained by combining Proposition \ref{CSest}, Theorem \ref{gradpair} and noting that $\sin\theta/\lla$ has approximately unit magnitude on the collar boundary.  
\end{proof}

We introduce the square roots of geodesic-lengths.
\begin{definition}
The geodesic root-length gradient is $\lambda\ssa=\grad\lla^{1/2}$.
\end{definition}
\noindent The Theorem \ref{gradpair}  expansion for the WP Riemannian inner product becomes
\begin{equation*}\label{rgradprod}
\langle\lambda\ssa,\lambda\ssb\rangle\,=\,\frac{\delta_{\alpha\beta}}{2\pi}\,+\,O(\lla^{3/2}\llb^{3/2}).
\end{equation*} 
There is an interpretation of $\grad\lla^{1/2}=\lla^{-1/2}\mua/2$ as an indicator function (differential) for the collar $c(\alpha)$.  The root-length gradient $(2\pi)^{1/2}\lambda\ssa$ has approximately unit norm and as observed in Proposition \ref{CSest}, the gradient is $O(\lla^{3/2})$ on the collar complement. From formula (\ref{grada}) on the collar $c(\alpha)$, the gradient $\grad\lla^{1/2}=\lla^{-1/2}a\ssa(\alpha)\omega/2\,+\,O(\lla^{3/2})$ is closely approximated by its main term; by Corollary \ref{gradexp} the remainder is exponentially small on the core of the collar.  Further for a collection of disjoint simple closed geodesics, since collars are disjoint, the root-length gradients approximately have disjoint supports.  These observations are used in the proof of Corollary \ref{pantsbd} below.  

A pants decomposition $\caP$ is a maximal collection $\{\alpha_1,\dots,\alpha_{3g-3+n}\}$ of disjoint simple closed geodesics.  In \cite[Theorem 3.7]{WlFN}, it is shown for a pants decomposition $\caP$ that the gradients $\{\grad\ell_{\alpha}\}_{\alpha\in\caP}$ provide a global $\mathbb C$-frame for the vector bundle of harmonic Beltrami differentials over $\caT$.  Equivalently, the gradients $\{\grad\ell_{\alpha}^{1/2}\}_{\alpha\in\caP}$ and the complex differentials $\{\partial\ell_{\alpha}^{1/2}\}_{\alpha\in\caP}$ provide global $\mathbb C$-frames for their  respective vector bundles over $\caT$.  Hatcher and Thurston observed that there are only a finite number of pants decompositions modulo the action of the mapping class group $Mod$.  To ensure uniform remainder terms in expansions, we consider bounded pants decompositions.  Bers found that there is a positive constant $b_{g,n}$ such that Teichm\"{u}ller space is covered by the bounded pants decomposition regions $\mathcal B(\mathcal P)=\{\ell_{\alpha_j}<b_{g,n},\,\alpha_j\in\mathcal P\}$, called Bers regions \cite{Busbook}.  We will give expansions in terms of Bers regions. 

A hyperbolic metric is described by its set of geodesic-lengths.  Basic behavior of the WP metric can be understood by having a model for the metric in terms of variations geodesic-lengths.   Explicit models are presented in Theorem 4.3 and Corollaries 4.4 and 4.5 of \cite{Wlbhv}.  We recall the basic comparison below.   Comparing the WP and Teichm\"{u}ller metrics, as well as developing expansions for the WP metric involves comparison of the $L^{\infty}$, $L^1$ and $L^2$ norms for harmonic Beltrami differentials or equivalently for holomorphic quadratic differentials.   For $\mu\in\mathcal H(R)$ then $\mu=\overline\varphi(ds^2)^{-1}$ and the mapping $\mu\mapsto\varphi$ of $\mathcal H(R)$ to $Q(R)$ is an isometry for each norm.  
An asymptotic decomposition and analysis of concentration of holomorphic quadratic differentials in terms of the $L^1$ norm is given in \cite[Sections 4 and 5]{HSS}.  The authors use the analysis to study iteration limits for exponential type maps of the complex plane.  
We consider the $L^{\infty}$ to $L^2$ comparison for $\mathcal H(R)$.  A comparison is an ingredient in the work of Liu-Sun-Yau \cite[Lemma 4.3]{LSY1}.  The norm comparison is an ingredient in the work of Burns-Masur-Wilkinson \cite[Section 5.1]{BMW} on the geodesic flow.  The comparison is basic for the Teo approach for lower bounds for curvature \cite[Proposition 3.1]{LPT}; also see Theorem \ref{wppastbd} below and the remarks on the Teo bounds.  The comparison is used to study the covariant derivatives of the gradient of geodesic-length in \cite{Wlext}.  The Axelsson-Schumacher bound for the Hessian of geodesic-length is presented in terms of the $L^{\infty}$ norm \cite{AxSchu}; relating the bound to WP length involves the ratio of norms.  We give the comparison of norms in terms of the surface systole $\Lambda(R)$, the shortest length for a closed geodesic on $R$.  

\begin{corollary}\textup{Comparison of norms.}\label{pantsbd}
On a Bers region $\caB(\caP)$, the WP Hermitian pairing is uniformly comparable to $\sum_{\alpha\in\caP}|\partial\lla^{1/2}|^2$.  Equivalently on a Bers region, the norms $\|\sum_{\alpha\in\caP}a^{\alpha}\lambda\ssa\|^2_{WP}$ and $\sum_{\alpha\in\caP}|a^{\alpha}|^2$ are uniformly comparable.  Given $\epsilon>0$, there is a positive value $\Lambda_0$, such that for the surface systole $\Lambda(R)\le\Lambda_0$, the maximal ratio of $L^{\infty}$ and $L^2$ norms for $\caH(R)$ satisfies 
\[
(1-\epsilon)\Big(\frac{2}{\pi\Lambda(R)}\Big)^{1/2}\,\le\,\max_{\mu\in\caH(R)}\frac{\|\mu\|_{\infty}}{\|\mu\|_{WP}}\,\le\,(1+\epsilon)\Big(\frac{2}{\pi\Lambda(R)}\Big)^{1/2}.
\]
The maximal ratio is approximately realized for the gradient of the shortest geodesic-length.
\end{corollary}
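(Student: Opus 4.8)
The plan is to treat the three assertions in turn: deduce the norm equivalence on a Bers region from the gradient-pairing expansions together with compactness, and then establish the sharp systole comparison by a matching pair of bounds --- the lower bound by exhibiting a single test differential, the upper bound by localizing the supremum to a collar. Throughout I pass freely between $\caH(R)$ and $Q(R)$ along the isometry $\mu\mapsto\varphi$, so it suffices to argue for harmonic Beltrami differentials.

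First, the norm equivalence. Since $\{\lambda\ssa\}_{\alpha\in\caP}$ is a global frame, the Gram matrix $G_{\alpha\beta}=\langle\langle\lambda\ssa,\lambda\ssb\rangle\rangle$ is Hermitian positive definite, and the claim is that its eigenvalues are bounded above and below by positive constants, uniformly over $\caB(\caP)$. On the sublocus where every $\lla\ge\epsilon_0$ this follows from continuity and compactness, since such a locus lies in a compact (Mumford thick) part of moduli space and there are finitely many pants decompositions modulo $Mod$. On the complementary sublocus, where some lengths are small, I invoke Theorem \ref{gradpair}: the diagonal entries satisfy $G_{\alpha\alpha}=\tfrac{1}{4\pi}+O(\lla^3)$ and the off-diagonal entries are $O(\lla^{3/2}\llb^{3/2})$, so for the small-length indices $G$ is a small perturbation of $\tfrac{1}{4\pi}I$, the block coupling small-length to bounded-length indices is again controlled by the off-diagonal estimate, and positivity of the bounded-length block is the compactness statement applied to the lower-dimensional factor. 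Combining the two regimes gives uniform two-sided bounds, which is the asserted comparability of $\|\sum a^\alpha\lambda\ssa\|_{WP}^2$ with $\sum|a^\alpha|^2$.

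For the systole comparison the lower bound is the easy half. I take as test differential the gradient $\mug=\grad\llg$ of the shortest geodesic $\gamma$, $\llg=\Lambda(R)$. By Theorem \ref{gradpair}, $\|\mug\|_{WP}^2=\tfrac{2}{\pi}\Lambda+O(\Lambda^4)$. By Corollary \ref{gradexp}, on the collar $c(\gamma)$ one has $\mug=\tfrac{2}{\pi}\omega+O(\Lambda^3)$ with remainder exponentially small at the core $\vartheta=\pi/2$, where $|\omega|=1$, while by Proposition \ref{thetprop} the differential is $O(\Lambda^2)$ off the collar. Hence $\|\mug\|_\infty=\tfrac{2}{\pi}+O(\Lambda^3)$ is attained at the core, and the ratio equals $(\tfrac{2}{\pi\Lambda})^{1/2}(1+O(\Lambda^3))$, exceeding $(1-\epsilon)(\tfrac{2}{\pi\Lambda})^{1/2}$ for $\Lambda$ small. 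This simultaneously proves the final sentence, that the maximum is approximately realized by the gradient of the shortest geodesic-length.

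The upper bound is the substantive half and the main obstacle. Let $|\mu(p)|=\|\mu\|_\infty$. A mean-value estimate on $\mathH$ followed by Cauchy--Schwarz gives $|\mu(p)|\le c\,\inj(p)^{-1/2}\|\mu\|_{WP}$, the reciprocal injectivity radius counting the overlap of the lifted ball; together with Lemma \ref{enhcollar} this shows that for $\Lambda$ small the point $p$ cannot lie in the thick part, where the ratio is $O(1)$, and by Lemma \ref{cuspbd} the magnitude of $\mu$ in a cusp region is dominated by its value on the cusp boundary, so $p$ may be taken in a collar $c(\alpha)$ with $\lla\ge\Lambda$. On that collar I expand $\mu$ by Proposition \ref{CSest} as $a(\alpha)\omega$ plus a remainder controlled by the boundary maximum $M=\max_{\partial c(\alpha)}|\mu|$ and decaying toward the core. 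The injectivity radius is bounded below on $\partial c(\alpha)$ by Lemma \ref{enhcollar}, so the crude estimate gives $M=O(\|\mu\|_{WP})$; hence if the supremum is attained near the boundary the ratio is $O(1)$, while if it is attained near the core then $\|\mu\|_\infty=|a(\alpha)|(1+o(1))$. For the latter case the decisive point is the $L^2$ lower bound: the annular Fourier modes are mutually $L^2$-orthogonal, the zeroth mode is exactly the $\omega$-term, and a direct computation gives $\int_{c(\alpha)}|\omega|^2\,dA=\tfrac{\pi\lla}{2}(1+o(1))$, whence $\|\mu\|_{WP}^2\ge\int_{c(\alpha)}|\mu|^2\,dA\ge|a(\alpha)|^2\tfrac{\pi\lla}{2}(1+o(1))$. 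Combining, $\|\mu\|_\infty^2/\|\mu\|_{WP}^2\le\tfrac{2}{\pi\lla}(1+o(1))\le\tfrac{2}{\pi\Lambda}(1+o(1))$, at most $(1+\epsilon)^2\tfrac{2}{\pi\Lambda}$ once $\Lambda\le\Lambda_0$. The delicate part throughout is making the $o(1)$ errors uniform in the surface and independent of topological type, and in particular keeping the coupling between the boundary-concentrated remainder and the core-concentrated leading term from degrading either the supremum or the $L^2$ mass enough to spoil the sharp constant $(2/\pi)^{1/2}$; the $L^2$ orthogonality of the annular modes is precisely what renders this coupling harmless.
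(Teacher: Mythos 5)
Your argument is sound, but it reaches the sharp upper bound by a genuinely different route than the paper. The paper argues globally: it fixes a small $c_0$, lets $\sigma$ be the geodesics with $\lla\le c_0$, and decomposes $\mu=\sum_{\alpha\in\sigma}a^{\alpha}\lambda\ssa+\mu_0$ with $\mu_0\perp\grad\lla$ for all $\alpha\in\sigma$. By Theorem \ref{gradpair} the main collar coefficients of $\mu_0$ vanish, so Proposition \ref{CSest}, Lemma \ref{cuspbd} and a mean value estimate give $\|\mu_0\|_{\infty}\le c_1\|\mu_0\|_{WP}$; the ratio problem is thereby reduced to a finite-dimensional maximization over the coefficient vector $(a^{\alpha})$, settled because the matrices $(a\ssa(\beta)/2\lla^{1/2})$ and $(\langle\lambda\ssa,\lambda\ssb\rangle)$ are near-diagonal, with maximizer $\lambda_{\alpha'}$ for the systole. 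You instead localize at the point where $\|\mu\|_{\infty}$ is attained and, on that single collar, produce the $L^2$ lower bound $\|\mu\|_{WP}^2\ge\int_{c(\alpha)}|\mu|^2\,dA\ge|a(\alpha)|^2\,\tfrac{\pi\lla}{2}(1+o(1))$ from orthogonality of the rotational Fourier modes; no global decomposition or matrix analysis is needed, and the bound $(2/\pi\lla)^{1/2}\le(2/\pi\Lambda(R))^{1/2}$ follows for whichever collar carries the supremum. Both proofs extract the constant from the same two facts --- $|\omega|=1$ at the collar core and $\int_{c(\alpha)}|\omega|^2\,dA\sim\pi\lla/2$ --- but yours is the more elementary and localized, while the paper's buys structural information it reuses later (near-extremal differentials are close to the span of the short root-length gradients, and the coefficient matrices range in a compact subset of $GL(\mathbb R)$; see the proof of Corollary \ref{wpseccurv}). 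For the first assertion the paper simply cites \cite[Theorem 4.3]{Wlbhv}; your compactness-plus-perturbation reconstruction is consistent with it, with the remark that uniform positivity of the bounded-length block as the short lengths degenerate is precisely the continuity of pairings at the augmented boundary (Lemma \ref{inprcts}).

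Two small repairs. Your boundary/core dichotomy on the collar leaves the intermediate region unaddressed, and the multiplicative form $\|\mu\|_{\infty}=|a(\alpha)|(1+o(1))$ fails when $|a(\alpha)|$ is small compared to the boundary maximum $M$; the clean statement is additive and uniform: since $e^{-2\pi\theta/\lla}\lla^{-2}\sin^2\theta$ is maximized at $\theta=\lla$ with value about $e^{-2\pi}$, Proposition \ref{CSest} gives $|\mu|\le|a(\alpha)|\sin^2\theta+O(M)$ on all of $c(\alpha)$, hence $\|\mu\|_{\infty}\le|a(\alpha)|+O(\|\mu\|_{WP})$ wherever the supremum sits, and your $L^2$ bound then closes the estimate with the additive error absorbed into $\epsilon$ once $\Lambda(R)\le\Lambda_0$. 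Also, the mean value claim $|\mu(p)|\le c\,\inj(p)^{-1/2}\|\mu\|_{WP}$ requires, beyond the overlap count $\inj(p)^{-1}$, the bound $O(\inj(p))$ for the area of the injected ball; since you invoke the estimate only where the injectivity radius is bounded below (thick part, collar and cusp boundaries), the cruder $\inj(p)^{-1}$ version suffices there, so this imprecision is harmless.
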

\begin{proof}
The first comparison is established in \cite[Theorem 4.3]{Wlbhv}.  The argument combines the  linear independence of geodesic-lengths for a pants decomposition and the limiting of geodesic root-lengths to a $(2\pi)^{-1/2}$-multiple of an orthonormal frame for lengths small. 
It is also established in the proof that on a Bers region the matrix of main coefficients
\[
\mathbf A\,=\,\Big(a\ssa(\beta)\lla^{-1/2}\llb^{1/2}\Big)_{\alpha,\beta\in\caP}\,=\,\Big(4\langle\lambda\ssa,\lambda\ssb\rangle\Big)_{\alpha,\beta\in\caP}
\]
varies in a compact set in $GL(\mathbb R)$.   The pairing formula
\[
2\partial\llb^{1/2}(\sum_{\alpha\in\caP}a^{\alpha}\lambda\ssa)\,=\,\sum_{\alpha\in\caP}a^{\alpha}a\ssa(\beta)\lla^{-1/2}\llb^{1/2}
\]
provides that the Hermitian forms of the second stated comparison differ by conjugation by $\mathbf A$; the second pair of norms are comparable. 

To consider the $L^{\infty}$ norm, we choose a sufficiently small positive constant $c_0$ and consider the set $\sigma$ of disjoint geodesics $\alpha$ with $\lla\le c_0$.  A harmonic Beltrami differential is given as a unique linear combination
\[
\mu\,=\,\sum_{\alpha\in\sigma}a^{\alpha}\lambda\ssa\,+\,\mu_0,\quad\mbox{for }\mu_0\perp\grad\lla, \alpha\in\sigma.
\]
By Theorem \ref{gradpair}, for the collars $c(\alpha),\alpha\in\sigma$, the main coefficients of $\mu_0$ vanish and from Proposition \ref{CSest}, the maximum of $|\mu_0|$ on the collars is bounded in terms of the maximum on the boundary.  By Lemma \ref{cuspbd}, the maximum on a cusp region is bounded in terms of the maximum on the boundary.   The maximum of $|\mu_0|$ is bounded in terms of the maximum on the complement of the collars and cusp regions. The complement $R'$ of the collars and cusp regions has injectivity radius bounded below by $c_0/2$.  Harmonic Beltrami differentials satisfy a mean value estimate: given $\epsilon>0,\,|\nu(p)|\le c_{\epsilon}\int_{B(p;\epsilon)}|\nu|dA$ for the hyperbolic area element.  On the region $R'$, the maximum is bounded in terms of the $L^1$ norm for a ball of diameter $c_0$, and the $L^1$ norm is bounded in terms of the $L^2$ norm for the ball.  In brief, $\|\mu_0\|_{\infty}$ is bounded by $c_1\|\mu_0\|_{WP}$ for a suitable constant depending on the choice of $c_0$. 

By Proposition \ref{CSest} and Corollary \ref{gradexp}, on a $\beta\in\sigma$ collar, the harmonic Beltrami differential $\mu$ satisfies for coefficients $a=(a^{\alpha})_{\alpha\in\sigma}$  
\[
\mu\,=\,\sum_{\alpha\in\sigma}a^{\alpha}a\ssa(\beta)\sin^2\theta/2\lla^{1/2}\,+\,
O((e^{-\theta/\llb}\,+\,e^{(\theta-\pi)/\llb})\|a\|\|\mu\|_{WP}),
\]
and the WP norm for the surface satisfies
\[
\|\mu\|_{WP}^2\,=\,\|\sum_{\alpha\in\sigma}a^{\alpha}\lambda\ssa\|_{WP}^2+\|\mu_0\|_{WP}^2.
\]
Modulo an overall $O(e^{-1/c_0})$ approximation, from the expansion, we can consider for the surface systole sufficiently small, that the ratio of norms is maximized for $\mu_0=0$, and that the $|\mu|$ maximum occurs on a geodesic $\beta\in\sigma$.   From Theorem \ref{gradpair}, the matrix $(a\ssa(\beta)/2\lla^{1/2})$ is given as diagonal with entries $1/\pi\lla^{1/2}$ and a remainder of $O(c_0^{5/2})$, while the matrix $(\langle\lambda\ssa,\lambda\ssb\rangle)$ is given as the $1/2\pi$ multiple of the identity and a remainder of $O(c_0^3)$.  Modulo an $O(c_0^{5/2})$ approximation, we consider only the matrix leading terms.  The ratio of contributions of leading terms is maximized for $\mu=\lambda_{\alpha'}$, for $\ell_{\alpha'}$ the surface systole, the shortest geodesic-length.  The value is $(2/\pi\ell_{\alpha'})^{1/2}$.  
\end{proof} 

Gradients of bounded geodesic-length functions are approximately determined by point evaluation on the geodesics of a bounded pants decomposition as follows. In the direction of the imaginary axis, the elementary differential is given as $\omega=\overline{(idy/iy)^2}(ds^2)^{-1}=1$.   By Corollary \ref{gradexp}, for geodesics $\alpha,\beta$ simple, disjoint, for the standard collar representation, the gradient of $\lla$ is given as $\grad\lla=2/\pi+O(\lla^3)$ along $\alpha$ and as $\grad\lla=O(\lla^2\llb)$ along $\beta$.  For a bounded pants decomposition, the geodesic-length gradients approximately diagonalize point evaluation on the geodesics.

By Proposition \ref{CSest} and Theorem \ref{gradpair}, holomorphic quadratic differentials in $\grad\lla^{\perp}\subset Q(R)$ also have their $L^p$, $1\le p\le\infty$, norms on the collar $c(\alpha)$ bounded by the supremum norms on the collar boundary.  This property can be combined with convergence of hyperbolic metrics to bound holomorphic quadratic differentials and their pairings on degenerating families of Riemann surfaces.  In particular for $\lla$ tending to zero on a family, the spaces $\grad\lla^{\perp}$ limit to the holomorphic quadratic differentials with finite $L^p$ norm in a neighborhood of the resulting cusp pair.         

\section{Green's functions for the operator $\Delta=-2(D-2)^{-1}$}

The deformation equation for a hyperbolic metric involves the Laplace-Beltrami operator $D$ acting on $L^2(R)$.  In particular the linearization of the constant curvature $-1$ equation involves the operator $-(D-2)$.  Starting from a harmonic Beltrami differential, solving for the deformed hyperbolic metric involves the operator $\Delta=-2(D-2)^{-1}$.  
The WP curvature tensor is given in terms of harmonic Beltrami differentials and the operator $\Delta$.  Accordingly the operator plays an important role in the works 
 \cite{Jost2,Jost1,Schucurv,Siuwp,LPT,Trap,Wfharm,Wfhess,Wlchern,Wlhyp}.  Wolf organizes the deformation calculation in a different manner, in effect using the Jacobians of harmonic maps for evaluation of the operator \cite{Wfharm}.  Huang uses the approach of Wolf to evaluate the operator \cite{Zh,Zh2,Zh3}.  Schumacher organizes the deformation calculation in a different manner \cite{AxSchu,Schu}.  He considers on the total space of a deformation family, the Chern form of the fiberwise K\"{a}hler-Einstein (hyperbolic) metric.  He finds that the negative Chern form is positive definite and observes that the horizontal lift of a deformation field is the corresponding harmonic Beltrami differential; the description does not involve potential theory.   In his approach the operator $\Delta$ plays a secondary role.   Liu-Sun-Yau follow the Schumacher approach; their work involves only limited consideration of the potential theory of $-(D-2)$ \cite{LSY1,LSY2,LSY3}.  We consider the basic potential theory for the operator.    

The Laplace-Beltrami operator is essentially self-adjoint acting on $L^2(R)$.   The integration by parts formula 
\[
\int_RfDg\,dA\,=\,-\int_R\nabla f\nabla g\,dA
\]
provides that the spectrum of $D$ is non positive and that $\Delta$ is a bounded operator acting on $L^2(R)$ with unit norm.  The maximum principle for the equation $(D-2)f=g$ provides that $2\max_R|f|\le\max_R|g|$, for $g$ continuous, vanishing at any cusps.  By a general argument, $f$ also vanishes at any cusps.   At a maximum $p$ of $f$, then $Df(p)\le0$ and consequently $2f(p)\le -g(p)$; at a minimum $q$ of $f$ then $Df(q)\ge 0$ and $2f(q)\ge -g(q)$ (if $f(q)$ is negative the inequality for the absolute value follows).   A consequence is that $\Delta$ is a bounded operator on $C_0(R)$ with at most unit norm; the equation $\Delta1=1$ and an approximation provide that the norm is unity.  The inequalities also provide that $f$ is non negative if $g$ is non positive.

We present the standard properties of the operator \cite{GT, Wells}.   

\begin{theorem}\textup{Properties of $\Delta$.}\label{delprop}  The operator is self-adjoint, positive and bounded on $L^2(R)$.  The operator is bounded on $C_0(R)$ with unit norm.   The operator has a positive symmetric integral kernel Green's function $G$.
\end{theorem}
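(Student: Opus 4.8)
The plan is to assemble the facts already derived in the preceding discussion and to supplement them with the standard elliptic theory of \cite{GT,Wells} for the existence of the kernel. First I would record the $L^2$ structure spectrally. Since $D$ is essentially self-adjoint with non-positive spectrum, the shift $D-2$ is self-adjoint and invertible with spectrum contained in $(-\infty,-2]$; hence $(D-2)^{-1}$ is self-adjoint with spectrum in $[-\tfrac12,0)$, and by the spectral mapping theorem $\Delta=-2(D-2)^{-1}$ is self-adjoint with spectrum in $(0,1]$. Positivity of $\Delta$ and the operator-norm identity $\|\Delta\|_{L^2}=\sup\sigma(\Delta)=1$ then follow at once; the value $1$ is attained on the constant eigenfunction, since the finite area of $R$ places the constants in $L^2(R)$ with $D\cdot 1=0$.

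Next I would treat the $C_0(R)$ bound using the maximum principle for $(D-2)f=g$ recorded above. At an interior maximum $p$ of $f$ one has $Df(p)\le 0$, so $-2f(p)=g(p)-Df(p)\ge g(p)$, hence $2\max f\le\max(-g)$, and symmetrically at a minimum, giving $2\max_R|f|\le\max_R|g|$ and $\|\Delta\|_{C_0}\le 1$. Here one needs that $f$ vanishes at the cusps whenever $g$ does, so that the extrema are attained in the interior and the argument applies on the non-compact surface. To see the norm is exactly $1$, I would use the identity $\Delta 1=1$ — which holds because $(D-2)(-\tfrac12)=1$ — together with an approximation of the constant by functions in $C_0(R)$ of sup-norm one, supported on exhausting compact sets.

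For the Green's function I would invoke elliptic theory. The operator $D-2$ is second-order elliptic, and the bounded resolvent $(D-2)^{-1}$ admits a Schwartz kernel; interior elliptic estimates upgrade this to a genuine Green's function $G_{D-2}(x,y)$, continuous off the diagonal and carrying the integrable logarithmic diagonal singularity of dimension two, so that $G=-2G_{D-2}$ is a locally integrable kernel representing $\Delta$. Symmetry $G(x,y)=G(y,x)$ is precisely the self-adjointness established above. Positivity $G\ge 0$ follows from the sign form of the maximum principle: if $g\ge 0$ then the solution of $(D-2)f=g$ satisfies $f\le 0$ at its maximum, since there $2f(p)\le -g(p)\le 0$, so $f\le 0$ throughout and $\Delta g=-2f\ge 0$; as $\Delta$ preserves non-negativity, its kernel is non-negative.

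The main obstacle is the construction and control of the kernel on the finite-area but non-compact surface, where the injectivity radius degenerates into the cusps. The essential inputs are the spectral gap — the spectrum of $D-2$ is bounded away from zero, so the resolvent is bounded and the kernel decays — and the decay of solutions toward the cusps already extracted from the maximum principle; the cited references \cite{GT,Wells} then supply the standard machinery producing a symmetric kernel with an integrable diagonal singularity. The remaining claims are a direct reading off of the spectral and maximum-principle facts assembled above, so the proof is essentially one of organization rather than of a new estimate.
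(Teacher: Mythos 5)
Your proposal is correct, and the $L^2$ and $C_0(R)$ parts track the paper's own reasoning almost verbatim: the non-positive spectrum of $D$ from integration by parts giving self-adjointness, positivity and unit $L^2$ norm (with the constant function, in $L^2$ by finite area, realizing the norm), the maximum principle giving $2\max_R|f|\le\max_R|g|$, and the identity $\Delta 1=1$ plus approximation by compactly supported functions giving exactly unit norm on $C_0(R)$. Where you genuinely diverge is the Green's function. The paper does not argue its existence abstractly; immediately after the theorem it \emph{constructs} $G$ as the uniformization group sum $G(z,z_0)=\sum_{A\in\Gamma}-2Q_2(d(z,Az_0))$, where $-2Q_2$ is the elementary Green's function of $\Delta$ on $\mathbb H$ (an associated Legendre function, citing Fay). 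That explicit form buys three things at once: strict positivity is immediate term by term; symmetry is inherited from $Q_2$ depending only on distance; and, most importantly, the decay $-Q_2\approx e^{-2d}$ of the summand is exactly what powers the distant sum estimate of Proposition \ref{grest} and the degeneration analysis in Theorem \ref{curvcont}, so the construction is not a convenience but a working tool for the rest of the paper. Your route through the Schwartz kernel, interior elliptic regularity, and the spectral gap is legitimate and more general (it needs no uniformization), but it is non-constructive and delivers less: the maximum-principle argument as you state it yields only $G\ge 0$, and upgrading to the strict positivity asserted in the theorem requires an additional strong maximum principle step (if $f\le 0$ vanishes at an interior point while $(D-2)f=g\ge 0$, then $f\equiv 0$ and hence $g\equiv 0$); moreover none of the quantitative decay needed downstream comes out of the abstract existence argument, so you would have to re-derive the group-sum representation anyway before Proposition \ref{grest}.
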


The Green's function is given by a uniformization group sum
\[
G(z,z_0)\,=\,\sum_{A\in\Gamma}-2Q_2(d(z,Az_0))
\]
for $Q_2$ an associated Legendre function, and $d(\ ,\ )$ hyperbolic distance on $\mathbb H$ \cite{Fay}.  The summand $-2Q_2$ is the Green's function for the operator $\Delta$ acting on functions small at infinity on $\mathbb H$.  The summand has a logarithmic singularity at the origin and satisfies $-Q_2\approx e^{-2d(\ ,\ )}$ at large distance.

\begin{proposition}\label{grest}\textup{The distant sum estimate.}  On the Riemann surface $R$ the Green's function is bounded as
\[
G(z,z_0)\,\le\,C\,\inj(z_0)^{-1}e^{-d_R(z,z_0)}
\]
where for $c_0$ positive, the constant $C$ is uniform in the surface  $R$ for $d_R(z,z_0)\ge c_0$.
\end{proposition}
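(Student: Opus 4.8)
The plan is to follow the distant sum estimate template of \cite[Chapter 8]{Wlcbms}, already used for Proposition \ref{thetprop}, bounding the orbit sum term-by-term through a subharmonic mean value estimate and then reassembling the terms into a single integral over the exterior of a hyperbolic ball. First I would observe that, since $d_R(z,z_0)$ is the minimum of $d(z,Az_0)$ over the orbit $\Gamma z_0$, every summand has argument $d(z,Az_0)\ge d_R(z,z_0)\ge c_0$; thus the entire sum stays away from the logarithmic singularity of $-2Q_2$ at the origin, and the large-distance asymptotic $-Q_2\approx e^{-2d}$ gives $-2Q_2(d)\le C_{c_0}e^{-2d}$ on the relevant range. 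The problem is thereby reduced to estimating $\sum_{A\in\Gamma}e^{-2d(z,Az_0)}$.

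Next I would apply a mean value estimate to the individual summands. The function $w\mapsto -2Q_2(d(z,w))$ is, away from $z$, annihilated by $(D-2)$, hence is a positive solution of $Du=2u$ and so subharmonic; it therefore satisfies a submean value inequality $-2Q_2(d(z,Az_0))\le c_\epsilon\int_{B(Az_0;\epsilon)}-2Q_2(d(z,w))\,dA$ for a fixed small $\epsilon<c_0/2$, so that each ball avoids the singular point $z$. Summing and inserting the bound $-2Q_2\le C_{c_0}e^{-2d}$ inside the balls, the sum of integrals is controlled by the integral over the union $\bigcup_A B(Az_0;\epsilon)$. As in Proposition \ref{thetprop}, the reciprocal injectivity radius $\inj(z_0)^{-1}$ bounds the number of overlapping balls, so the sum is at most $C\inj(z_0)^{-1}\int_{\bigcup_A B(Az_0;\epsilon)}e^{-2d(z,w)}\,dA$. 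The union is contained in the exterior region $\{w\mid d(z,w)\ge d_R(z,z_0)-\epsilon\}$.

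Finally I would evaluate the exterior integral in geodesic polar coordinates centered at $z$, where the area element is $\sinh r\,dr\,d\phi$. The essential cancellation is that the $e^{-2d}$ decay of the summand beats the $e^{d}$ exponential growth of hyperbolic area, leaving a net single exponential: $\int_{\{d(z,w)\ge\rho\}}e^{-2d(z,w)}\,dA=2\pi\int_\rho^\infty e^{-2r}\sinh r\,dr=\pi(e^{-\rho}-\tfrac13 e^{-3\rho})\le\pi e^{-\rho}$. With $\rho=d_R(z,z_0)-\epsilon$ this is $O(e^{-d_R(z,z_0)})$, the factor $e^{\epsilon}$ being absorbed into the constant. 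Combining the three steps yields $G(z,z_0)\le C\inj(z_0)^{-1}e^{-d_R(z,z_0)}$, with $C$ depending only on $c_0$ (through the asymptotic bound on $Q_2$ and the exterior range) and on $\epsilon$; since every estimate is carried out intrinsically on $\mathbb H$, the constant is uniform in the surface $R$.

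The main obstacle I anticipate is the passage from the term-by-term sum to the single exterior integral, specifically the overlap-multiplicity bound by $\inj(z_0)^{-1}$: orbit points can cluster along the core of a short collar when the injectivity radius is small, and it is precisely this clustering that produces the reciprocal injectivity factor in the statement. Controlling it uniformly in $R$, rather than merely for a fixed surface, is the crux, and is exactly the content of the distant sum estimate that I would invoke.
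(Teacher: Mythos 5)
Your proposal is correct and follows essentially the same route as the paper: a mean value estimate for the elementary kernel $-2Q_2$, reassembly of the orbit sum into a single integral over $\bigcup_A B(Az_0;\epsilon)$ with overlap multiplicity bounded by $\inj(z_0)^{-1}$, containment of that union in the complement of $B(z;d_R(z,z_0)-\epsilon)$, and the observation that the $e^{-2d}$ decay beats the $e^{d}$ growth of hyperbolic area to yield $Ce^{-d_R(z,z_0)}$. Your additions---the subharmonicity justification of the submean value inequality and the explicit $\int_\rho^\infty e^{-2r}\sinh r\,dr$ computation---are details the paper leaves implicit (it simply notes the inequalities are formally the same as for the series $\Theta_\alpha$), but the argument is the same.
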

\begin{proof}  The elementary Green's function satisfies a mean value estimate
\[ -Q_2(z,z_0)\le c_{\epsilon}\int_{B(z_0;\epsilon)}-Q_2(z,w)dA
\] 
for $d(z,z_0)>\epsilon$.  The  inequalities are formally the same as for estimating the series $\Theta\ssa$,
\[
\sum_{A\in\Gamma}-2Q_2\le \sum_{A\in\Gamma}c_{\epsilon}\int_{B(Az_0;\epsilon)}-Q_2dA \le c_{\epsilon}\inj(z_0)^{-1}\int_{\cup_{A\in\Gamma}B(Az_0;\epsilon)}-Q_2dA.
\]
For the basepoint $z\in\mathbb H$, let $\delta=d(z,\ )$ be distance from the basepoint.  The integrand satisfies $-Q_2\approx e^{-2\delta}$; the area element satisfies $dA\approx e^{\delta}d\theta d\delta$ for $\theta$ the angle about the basepoint.   The union of balls is necessarily contained in the ball complement $\mathbb H - B(z;d_R(z,z_0)-\epsilon)$.   The integral of $-Q_2dA$ over the ball complement is bounded by $Ce^{-d_R(z,z_0)}$, as desired.  
\end{proof}

Analyzing the WP curvature tensor involves understanding the contribution of $\Delta\mua\overline\mub$ on a collar $c(\eta)$.  Following the Collar Principle \cite[Chapter 8, Section 2]{Wlcbms}, the main contribution is expected from the rotationally invariant on a collar main term of $\mua\overline\mub$; given the exponential factor in the remainder term of Corollary \ref{gradexp}, the main term is approximately the product of main terms of the individual geodesic-length gradients. We consider the main term. 

Consider the geodesic $\eta$ having the imaginary axis as a component of its lift to $\mathbb H$ and the polar coordinate $z=re^{i\theta}$ on $\mathbb H$.  A rotationally invariant function on the collar, lifts to a function of the single variable $\theta$.   The rotationally invariant component of $(D - 2)$ is the one-dimensional operator $(D_{\theta}-2)=\sin^2\theta \frac{d^2}{d\theta^2}-2$.  The operator is essentially self-adjoint on $L^2(0,\pi)$ for the measure $\csc^2\theta d\theta$.  The function $u(\theta)=1-\theta\cot\theta$ is positive on $(0,\pi)$, vanishes to second order at zero, and satisfies $(D_{\theta}-2)u=0$.  The Green's function for $(D_{\theta}-2)$ is determined by the conditions: $(D_{\theta}-2)\mathbf G(\theta,\theta_0)=0,$ $\theta\ne\theta_0$;  $\mathbf G(\theta,\theta_0)$ vanishes at the interval endpoints;  $\mathbf G(\theta,\theta_0)$ is continuous with a unit jump discontinuity in $\frac{d}{d\theta}\mathbf G(\theta,\theta_0)$ at $\theta=\theta_0$.  The one-dimensional Green's function is given as
\begin{equation*}
\mathbf G(\theta,\theta_0)\,=\,\frac{-1}{\pi}
\begin{cases}
\,u(\theta)u(\pi-\theta_0), & \theta\le\theta_0 \\
\,u(\pi-\theta)u(\theta_0), &  \theta_0\le \theta.\\ 
\end{cases}
\end{equation*}
For rotationally invariant functions $g$ on a collar, the analysis of $(D_{\theta}-2)^{-1}g$ can be effected in terms of integrals of $\mathbf G$.  In Section \ref{curvexp} we use a different approach based on the general properties of Theorem \ref{delprop}, the estimate of Proposition \ref{thetprop}, and the simple equation
\begin{equation}\label{dsin4}
(D_{\theta}-2)\sin^2\theta\,=\,-4\sin^4\theta.
\end{equation}
In particular for the elementary Beltrami differential $\omega$, we formally have the equation
\[
 2\Delta\omega\overline\omega\,=\,2\Delta\sin^4\theta\,=\,\sin^2\theta.
\]

\section{The curvature tensor}\label{tensorsec}

Bochner discovered general symmetries of the Riemann curvature tensor with respect to the complex structure $J$ for a K\"{a}hler metric \cite{Boch}.  The symmetries are revealed by complexifying the tensor and decomposing by tangent type.   The curvature operator is the commutator of covariant differentiation
\[
R(U,V)W\,=D_UD_VW\,-\,D_VD_UW\,-D_{[U,V]}W.
\]
The curvature tensor $\langle R(U,V)W,X\rangle$ is defined on tangent spaces $T$. Tangent spaces are complexified by tensoring with the complex numbers $\mathbb C$.   The complexification is decomposed into tangents of {\em holomorphic} and {\em anti holomorphic type} $\mathbb C\otimes T=T^{1,0}\oplus T^{0,1}$, by considering the $\pm i$-eigenspaces of $J$.  Complex conjugation provides a natural complex anti linear isomorphism $\overline{T^{0,1}}=T^{1,0}$.  Real tangent directions are given as sums $Z\oplus\overline Z$ for $Z$ in $T^{1,0}$.   A general tensor
is extended to the complexification by complex linearity.   Bochner found for K\"{a}hler metrics that the complexified curvature tensor has a block form relative to the tangent type decomposition.  The only non zero curvature evaluations are for 
$T^{1,0}\times\overline{T^{1,0}}\times T^{1,0}\times\overline{T^{1,0}}$ and the conjugate space, with the latter evaluation simply the conjugate of the former evaluation.  In brief for a 
K\"{a}hler metric, the Riemann curvature tensor is fully determined by the evaluations $R_{\alpha\overline\beta\gamma\overline\delta}$.

We follow Bochner's exposition \cite{Boch}.  Riemannian geometry formulas are presented in terms of the complexification.   In particular, for the local holomorphic coordinate $(z_1,\dots,z_n)$, formal variables $t_i$, $1\le i\le 2n$, range over $\{z_1,\dots,z_n;\overline{z_1},\dots,\overline{z_n}\}$.  The Riemannian metric is given as
\[
ds^2\,=\,\sum_{i,j\in\{1,\dots,n;\overline 1,\dots,\overline n\}}g_{ij}dt_idt_j\,=\,2\sum_{\alpha,\beta\in\{1,\dots,n\}}g_{\alpha\overline\beta}dz\ssa d\overline{z\ssb},
\]
where $\sum_{\alpha,\beta}g_{\alpha\overline\beta}dz\ssa d\overline{z\ssb}$ is the Hermitian form for holomorphic type tangents.  By convention, Roman indices $h,i,j,k$ vary over $\{1,\dots,n;\overline 1,\dots,\overline n\}$, while Greek indices $\alpha,\beta,\gamma,\delta$ vary over $\{1,\dots,n\}$.  
A two-dimensional surface element is given in the form
\[
t_i\,=\,a^ix\,+\,b^iy
\]
or equivalently 
\[
z_{\alpha}\,=\,a^{\alpha}x\,+\,b^{\alpha}y,\ 
\overline{z_{\alpha}}\,=\,\overline{a^{\alpha}}x\,+\,\overline{b^{\alpha}}y,
\]
for $x,y$ real parameters and complex arrays $(a^1,\dots,a^n),\ (b^1,\dots,b^n)$, linearly independent over $\mathbb R$.  The Riemannian sectional curvature of the surface element is
\begin{equation}\label{seccurv}
\frac{\sum_{h,i,j,k}R_{h,i,j,k}a^hb^ia^jb^k}{\sum_{h,i,j,k}(g_{hj}g_{ik}\,-\,g_{hk}g_{ij})a^hb^ia^jb^k}
\end{equation}for the complexified tensor.  A holomorphic surface element is given in the simple form
\[
z\ssa\,=\,a^{\alpha}z
\]
for $z$ a complex parameter and $(a^1,\dots,a^n)$ a non zero complex array. 
The Riemannian sectional curvature of the holomorphic surface element is
\begin{equation}\label{holcurv}
\frac{-2\sum_{\alpha,\beta,\gamma,\delta}R_{\alpha\overline\beta\gamma\overline\delta}a^{\alpha}\overline{a^{\beta}}a^{\gamma}\overline{a^{\delta}}}
{\sum_{\alpha,\beta,\gamma,\delta}(g_{\alpha\overline\beta}g_{\gamma\overline\delta}\,+\,
g_{\gamma\overline{\beta}}g_{\alpha\overline\delta})a^{\alpha}\overline{a^{\beta}}a^{\gamma}\overline{a^{\delta}}}
\end{equation}
for the complexified tensor.   An example of the setup is provided by the upper half plane $\mathbb H$ with hyperbolic metric.   For the complex variable $z$, the Hermitian form for $T^{1,0}\mathbb H$ is $|dz|^2/2(\Im z)^2$ with Riemannian sectional curvature $-1$.

The holomorphic cotangent space at the point $R$ of Teichm\"{u}ller space is represented by the holomorphic quadratic differentials $Q(R)$.   In the investigation of WP distance and geodesic-length functions \cite{BMW,DW2,Msext,Rier,Wfhess,Wlcomp,Wlbhv,Wlext,Wlcbms} the WP Riemannian cometric is given by the real part of the Petersson product for $Q(R)$.   Following the Bochner setup, the Hermitian form corresponding to the WP Riemannian pairing is {\em one-half} the Petersson Hermitian form.  The original calculation of the curvature tensor is for the Petersson Hermitian form \cite[see Definition 2.6 and Theorem 4.2]{Wlchern}.  Consistent with the investigation of WP distance and geodesic-length functions, the WP curvature tensor corresponding to one-half the Petersson Hermitian form is
\begin{equation}\label{WPcurv}
R(\mu,\nu,\rho,\sigma)\,=\,\frac12\int_R\mu\overline\nu\Delta\rho\overline\sigma\,dA\,+\,\frac12\int_R\mu\overline\sigma\Delta\rho\overline\nu\,dA,
\end{equation}
for harmonic Beltrami differentials $\mu,\nu,\rho,\sigma\in\mathcal H(R)$ \cite{Wlchern}.

We first review some results about the sectional curvatures.  A collection of authors have studied the curvature \cite{Ahcurv,Zh4,Zh,Zh2,Zh3,Jost2,Jost1,LSY1,LSY2,LSY3,Royicm,Schu,Schucurv,Schucurv2,Siuwp, LPT,Trap,Trcurv,Trmbook,Wlchern,Wlhyp}. 
 Representative curvature bounds are presented below for the above normalization.  We write $\chi(R)$ for the Euler characteristic of the surface, and $\dim\caT$ for the complex dimension of the Teichm\"{u}ller space.  We write $\Lambda(R)$ for the systole of the surface, the shortest length for a closed geodesic.   The minimal injectivity radius for the complement of standard cusp regions is realized as either one half the surface systole or as unity on the boundary of a cusp region.  

\begin{theorem}\textup{WP curvature bounds.}\label{wppastbd}  The sectional curvature is negative, \cite{Trcurv,Wlchern}.  The holomorphic and Ricci curvatures are bounded above 
by $2(\pi\chi(R))^{-1}$ \cite{Royicm,Wlchern}. 
The scalar curvature is bounded above by $2(\dim\caT)^2(\pi\chi(R))^{-1}$, \cite{Wlchern}. 
Given $\Lambda_0>0,$ there is a positive constant $C$, independent of topological type, such that at $R\in\caT$ with $\Lambda(R)\ge\Lambda_0$, sectional curvatures are bounded below by $-C$ \cite{Zh3}.   
For $\dim\caT>1$, there are positive constants $C,\,C'$, depending on the topological type, such that at $R\in\caT$, sectional curvatures are as large as $-C\Lambda(R)$\footnote[1]{Huang has withdrawn his statement of a general upper bound for sectional curvatures \cite{Zh5}.} and sectional curvatures are bounded below by $-C'(\Lambda(R))^{-1}$, \cite{Zh2}.   
There is a universal function $\mathbf c(\Lambda)$ of the surface systole, such that the sectional and Ricci curvatures are bounded below by $-\mathbf c(\Lambda(R))^2$, and the scalar curvature is bounded below by $-\dim\caT\mathbf c(\Lambda(R))^2$ \cite[Proposition 3.4]{LPT}.  The universal function satisfies for $\Lambda$ small, 
$\mathbf c(\Lambda)\approx 8^{1/2}(\pi^{1/2}\Lambda)^{-1}$, \cite[Proposition 3.4 and formula (3.7)]{LPT}.       
\end{theorem}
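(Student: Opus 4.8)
These are established results of the cited authors; the plan is to present a single derivation whose engine is the curvature formula (\ref{WPcurv}) together with the properties of the operator recorded in Theorem \ref{delprop}: positivity, self-adjointness, unit operator norm on $L^2(R)$ and on $C_0(R)$, the positive symmetric Green's kernel $G$, and the normalization $\Delta 1 = 1$. Each listed bound is then read off (\ref{WPcurv}) by pairing it with the appropriate functional-analytic property of $\Delta$ and, for the fine systole-dependent statements, with the collar estimates of Section \ref{secgrad}.

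First, for negativity I would treat the holomorphic sections directly. Setting all arguments equal in (\ref{WPcurv}) gives $R(\mu,\mu,\mu,\mu)=\int_R |\mu|^2\,\Delta|\mu|^2\,dA$, which is strictly positive since $\Delta$ is a positive operator; the sign in (\ref{holcurv}) then forces the holomorphic sectional curvature to be negative. For a general real two-plane I would insert the kernel representation $\Delta f(z)=\int_R G(z,w)f(w)\,dA(w)$ into the numerator of (\ref{seccurv}) and reorganize the resulting double integral, using the symmetry and positivity of $G$, into a manifestly negative quadratic form by a Cauchy--Schwarz estimate; this is the Tromba--Wolpert argument and is the first genuinely delicate point.

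For the upper bounds I would decompose $f=|\mu|^2$ into its mean $\bar f=\big(\int_R f\,dA\big)/\mathrm{Area}(R)$ and a mean-zero remainder. Since $\Delta 1 = 1$ and $\Delta$ is self-adjoint, the two pieces are $\Delta$-orthogonal, and positivity gives $\int_R f\,\Delta f\,dA \ge \bar f^{\,2}\,\mathrm{Area}(R)$. Substituting $\mathrm{Area}(R)=-2\pi\chi(R)$ into (\ref{holcurv}) yields the holomorphic sectional curvature bound $2(\pi\chi(R))^{-1}$; parallel estimates for the traces defining the Ricci tensor and scalar curvature, again invoking only $\Delta 1=1$ and positivity, produce the Ricci bound and the scalar bound, the latter carrying its $(\dim\caT)^2$ factor from the double trace over a Hermitian-orthonormal basis of $\caH(R)$.

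The systole-dependent statements I would organize in two groups. The universal bound follows from the crude estimate $|R(\mu,\nu,\rho,\sigma)|\le \int_R |\mu||\nu|\,\Delta(|\rho||\sigma|)\,dA$ together with $\|\Delta g\|_\infty\le\|g\|_\infty$ and the $L^\infty$-to-$L^2$ comparison of Corollary \ref{pantsbd}, whose constant scales like a reciprocal power of $\Lambda(R)$; converting the factors to WP norms yields the universal function $\mathbf c(\Lambda)$ with the stated small-$\Lambda$ asymptotics. The sharper Huang bounds $-C'(\Lambda(R))^{-1}$ from below and the near-flat $-C\Lambda(R)$ directions are the \emph{main obstacle}: here I would localize (\ref{WPcurv}) to the collar $c(\alpha)$ of the shortest geodesic, replace each harmonic Beltrami differential by its rotationally invariant collar main term via Proposition \ref{thetprop} and Corollary \ref{gradexp}, evaluate the resulting invariant integral against $\Delta$ using (\ref{dsin4}) and the distant sum estimate Proposition \ref{grest}, and control the off-collar and cross terms. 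Matching the leading $\Lambda$-power on both sides requires showing that the exponentially small remainders of Corollary \ref{gradexp} and the tail of the Green's function sum are genuinely lower order, uniformly in the surface, and that is where the real work lies.
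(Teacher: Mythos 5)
The first thing to note is that the paper offers no proof of Theorem \ref{wppastbd}: it is a survey statement, with each clause attributed to the literature (Tromba--Wolpert for negativity, Royden--Wolpert for the upper bounds, Huang for the topology-dependent systole bounds, Teo for the universal lower bound). So there is no argument in the paper to match yours against; what can be judged is whether your reconstructions are sound. Most of them are. The diagonal evaluation $R(\mu,\mu,\mu,\mu)=\int_R|\mu|^2\,\Delta|\mu|^2\,dA>0$ together with the sign in (\ref{holcurv}) does give negative holomorphic sectional curvature, and your sketch for general two-planes is a fair description of the cited Tromba--Wolpert argument. The decomposition $f=\bar f+g$ with $\int_R g\,dA=0$, the identity $\Delta 1=1$ and self-adjointness do give $\int_R f\,\Delta f\,dA\ge \bar f^{\,2}\operatorname{Area}(R)$, and with $\operatorname{Area}(R)=-2\pi\chi(R)$ this is exactly the classical route to the $2(\pi\chi(R))^{-1}$ bound. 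Your plan for the Huang-type bounds is essentially what the paper itself carries out later, in Theorem \ref{Iexp} and Corollary \ref{wpseccurv}, which in fact sharpen the cited $-C'(\Lambda(R))^{-1}$ to $(-3-\epsilon)/\pi\Lambda(R)-c_{g,n,\epsilon}$.

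The genuine gap is in your derivation of Teo's bound. You propose to obtain ``the universal function $\mathbf c(\Lambda)$ with the stated small-$\Lambda$ asymptotics'' from the norm comparison of Corollary \ref{pantsbd}. That cannot work, for two reasons that the paper itself spells out in the closing paragraph of Section \ref{tensorsec}. First, the threshold $\Lambda_0$ in Corollary \ref{pantsbd} depends on the topological type (it comes from a Bers region and bounded pants decomposition argument), so any curvature bound derived from it is not universal --- and universality, independence of $(g,n)$, is precisely the content of Teo's clause. Second, the asymptotics do not match: Corollary \ref{pantsbd} gives the ratio $\|\mu\|_{\infty}/\|\mu\|_{WP}\approx(2/\pi\Lambda)^{1/2}$, hence via your estimate a lower bound of order $-\Lambda^{-1}$, whereas the stated function $\mathbf c(\Lambda)\approx 8^{1/2}(\pi^{1/2}\Lambda)^{-1}$ has $\mathbf c(\Lambda)^2\approx 8/(\pi\Lambda^2)$, a bound of order $-\Lambda^{-2}$. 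These are genuinely different results: Teo's ratio bound $2/(\pi^{1/2}\Lambda)$ is weaker in its $\Lambda$-dependence but holds with universal constants, while the paper's $(2/\pi\Lambda)^{1/2}$ is optimal in $\Lambda$ but topology-dependent. To prove the clause as stated you must invoke Teo's own norm comparison \cite[Proposition 3.1]{LPT}, whose proof avoids any choice of pants decomposition; it cannot be replaced by Corollary \ref{pantsbd}.
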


To simplify and make explicit the WP metric and curvature tensor, we use the gradients of geodesic-length functions.  For a pants decomposition $\mathcal P$, a maximal collection of disjoint simple closed geodesics, the gradients $\{\grad\ell_{\alpha}\}_{\alpha\in\caP}$ provide a global $\mathbb C$-frame for $T^{1,0}\caT$.  The root-length gradients $\{\lambda_{\alpha}\}_{\alpha\in\caP}$, $\,\lambda\ssa=\grad\ell\ssa^{1/2}$, provide a global $\mathbb C$-frame that limits to an orthogonal frame for vanishing lengths.  To provide uniform remainder terms, we use bounded pants decomposition regions, Bers regions.  In particular the WP metric is described on $\caT$ by considering expansions for bounded pants decompositions.  The expansion for the Hermitian form on $T^{1,0}\caT$ (one-half the Petersson Hermitian form) is
\begin{equation}\label{wpherm}
\langle\langle\lambda\ssa,\lambda\ssb\rangle\rangle\,=\,\frac{\delta_{\alpha\beta}}{4\pi}\,+\,O(\lla^{3/2}\llb^{3/2})
\end{equation}
for $\alpha,\beta$ simple geodesics, coinciding or disjoint, and given $c_0>0$, the remainder term constant is uniform for $\lla,\llb\le c_0$.  We have the following for the curvature tensor. 

\begin{theorem}\label{mainthm}
The WP curvature tensor evaluation for the root-length gradient for a simple closed geodesic satisfies
\[
R(\lambda\ssa,\lambda\ssa,\lambda\ssa,\lambda\ssa)\,=\,\frac{3}{16\pi^3\lla}\,+\,O(\lla),
\]
and
\[
R(\lambda\ssa,\lambda\ssa,\ ,\ )\,=\,\frac{3|\langle\langle\lambda\ssa,\ \rangle\rangle|^2}{\pi\lla}\,+\,O(\lla\|\ \|_{WP}^2).
\]
For simple closed geodesics, disjoint or coinciding, with at most pairs coinciding, the curvature evaluation $R(\lambda\ssa,\lambda\ssb,\lambda\ssg,\lambda\ssd)$ is bounded as $O((\lla\llb\llg\lld)^{1/2})$. Furthermore for $\alpha,\beta$ disjoint with $\lla,\llb\le\epsilon$, the evaluations $R(\lambda\ssa,\lambda\ssa,\lambda\ssb,\lambda\ssb)$ and 
$R(\lambda\ssa,\lambda\ssb,\lambda\ssa,\lambda\ssb)$ are bounded as $O(\epsilon^4)$.   For $c_0$ positive, the remainder term constants are uniform in the surface $R$ and independent of the topological type for $\lla,\llb,\llg,\lld\le c_0$.
\end{theorem}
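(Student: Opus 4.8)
The plan is to evaluate the tensor formula (\ref{WPcurv}) on the root-length gradients by exploiting that each $\lambda\ssa=\frac12\lla^{-1/2}\mua$ concentrates on its collar $c(\alpha)$. By Proposition \ref{thetprop} and Corollary \ref{gradexp}, on $c(\alpha)$ one has $\mua=\frac{2}{\pi}\omega+(\text{rapidly decaying})$ with $|\omega|=\sin^2\theta$, while $\mua=O(\lla^2)$ on the collar complement, so $\lambda\ssa=O(\lla^{3/2})$ there. First I would record the resulting reduction: every evaluation $R(\lambda\ssa,\lambda\ssb,\lambda\ssg,\lambda\ssd)$ becomes a sum of two integrals $\frac12\int\mu\overline\nu\,\Delta(\rho\overline\sigma)\,dA$ over the union of the relevant collars, the complement being absorbed into the remainder by the $O(\lla^2)$ pointwise bound together with the unit-norm bound for $\Delta$ on $C_0(R)$ from Theorem \ref{delprop}.

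For the diagonal evaluation the engine is the identity (\ref{dsin4}), $(D_\theta-2)\sin^2\theta=-4\sin^4\theta$, equivalently $2\Delta\sin^4\theta=\sin^2\theta$. Since $|\lambda\ssa|^2\approx\frac{1}{\pi^2\lla}\sin^4\theta$ on $c(\alpha)$, this gives $\Delta|\lambda\ssa|^2\approx\frac{1}{2\pi^2\lla}\sin^2\theta$, whence
\[
R(\lambda\ssa,\lambda\ssa,\lambda\ssa,\lambda\ssa)=\int_R|\lambda\ssa|^2\,\Delta|\lambda\ssa|^2\,dA\approx\frac{1}{2\pi^4\lla^2}\int_{c(\alpha)}\sin^6\theta\,dA.
\]
With $dA=\csc^2\theta\,\frac{dr}{r}\,d\theta$ and $\int_1^{e^\lla}\frac{dr}{r}=\lla$, the collar integral is $\lla\int_0^\pi\sin^4\theta\,d\theta=\frac{3\pi}{8}\lla$, producing the stated $\frac{3}{16\pi^3\lla}$. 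The bilinear statement follows from the same computation carried with the last two slots filled by a general harmonic Beltrami differential: only the rotationally invariant part of the product on $c(\alpha)$ survives against the rotationally invariant $|\lambda\ssa|^2$, and by the main-coefficient formula in Theorem \ref{gradpair} that part is measured by $\langle\langle\lambda\ssa,\ \rangle\rangle$; specializing recovers the diagonal value.

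The crux, and the step I expect to be hardest, is that $\Delta$ is \emph{non-local}: the identity $2\Delta\sin^4\theta=\sin^2\theta$ is exact only for the one-dimensional Dirichlet operator $D_\theta-2$ of Section \ref{secgrad}, whereas the $\Delta$ in (\ref{WPcurv}) is built from the surface Green's function $G$. I would therefore make the collar principle quantitative: the discrepancy between $\Delta$ and its rotationally invariant one-dimensional model is $(D-2)$-harmonic off the collar, and its feedback onto $c(\alpha)$ is governed by the distant sum estimate $G(z,z_0)\le C\inj(z_0)^{-1}e^{-d_R(z,z_0)}$ of Proposition \ref{grest}. Combined with the exponential weights $e^{-2\pi\theta/\lla}+e^{2\pi(\pi-\theta)/\lla}$ in the remainders of Corollary \ref{gradexp} and Proposition \ref{CSest}, this is what confines the diagonal remainder to $O(\lla)$; I would expect the bulk of the technical work to lie in bounding this leakage uniformly in the surface.

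Finally, for the off-diagonal and mixed bounds the gradients occupy disjoint collars, so each product $\lambda\ssa\overline{\lambda\ssb}$ with $\alpha\ne\beta$ is small — $O(\lla\llb^{3/2})$ on $c(\alpha)$ and $O(\lla^{3/2}\llb)$ on $c(\beta)$ by the off-diagonal main coefficients $a\ssb(\alpha)=O(\lla\llb^2)$ of Corollary \ref{gradexp}. For the general bound $O((\lla\llb\llg\lld)^{1/2})$ I would pair an $L^1$ estimate of one product (using $\int_{c(\alpha)}|\lambda\ssa|\,dA=O(\lla^{1/2})$) against the unit-norm bound of $\Delta$ on $C_0(R)$ applied to the other. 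For $R(\lambda\ssa,\lambda\ssb,\lambda\ssa,\lambda\ssb)$ the integrand $g=\lambda\ssa\overline{\lambda\ssb}$ is uniformly small with $\|g\|_2^2=O(\epsilon^5)$, so boundedness of $\Delta$ on $L^2$ gives $|\int g\,\Delta g|=O(\epsilon^5)$ at once. For $R(\lambda\ssa,\lambda\ssa,\lambda\ssb,\lambda\ssb)$ the analogous second term is controlled the same way, while the remaining term $\frac12\int|\lambda\ssa|^2\Delta|\lambda\ssb|^2$ requires the cross-collar form of the collar principle: one recovers $\Delta|\lambda\ssb|^2$ on $c(\alpha)$ from the far collar $c(\beta)$ through the kernel of Proposition \ref{grest} and tracks the resulting rotationally invariant contribution, with the aim of exhibiting it at the order demanded by the asymptotic product structure. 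Isolating this genuine interaction between two disjoint collars is the second place where I would expect to spend real effort.
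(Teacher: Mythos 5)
Your reduction to collars, the computation $\int_{c(\alpha)}\sin^6\theta\,dA=\tfrac{3\pi}{8}\lla$ yielding the leading term $3/(16\pi^3\lla)$, and the use of Theorem \ref{gradpair} to express the bilinear statement through $\langle\langle\lambda\ssa,\ \rangle\rangle$ all agree with the paper, and your $L^2$ argument $|\int g\,\Delta g\,dA|\le\|g\|_2^2$ for $g=\lambda\ssa\overline{\lambda\ssb}$ is a genuine simplification of the paper's treatment of the case $\alpha\beta\alpha\beta$. But there is a gap at exactly the step you flag as hardest, and it is not a technicality: the paper's whole proof runs through a device your plan lacks. The paper (Theorem \ref{delmuab}) does not control the discrepancy between $\Delta$ and the one-dimensional collar model by Green's function decay. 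It chooses a smooth approximate characteristic function $\chi$ of the collar, forms the globally defined difference $u=2\Delta\mua\overline\mua-\chi\,a\ssa(\alpha)^2\sin^2\theta$, and computes $(D-2)u$ by a purely \emph{local} calculation via (\ref{dsin4}), obtaining $-4\mua\overline\mua+4\chi a\ssa(\alpha)^2\sin^4\theta$ plus cutoff-derivative terms supported where $\sin\theta\asymp\lla$; by Proposition \ref{thetprop} off the collar and Corollary \ref{gradexp} on it, this right-hand side is pointwise $O(\lla^2)$ on all of $R$, and the unit $C_0(R)$-norm of $\Delta$ from Theorem \ref{delprop} (the maximum principle for $D-2$) then converts this into $\|u\|_\infty=O(\lla^2)$. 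That single inversion step is what upgrades the formal identity $2\Delta\sin^4\theta=\sin^2\theta$ to the global sup-norm expansion $2\Delta\mua\overline\mua=a\ssa(\alpha)^2\sin\ssa^2\theta+O(\lla^2)$, with no kernel estimate anywhere; Proposition \ref{grest} enters the paper only to sharpen remainders (closing remark of Section \ref{curvexp}) and for continuity at the augmentation. Your alternative, routing the ``leakage'' through the distant sum estimate, is precisely what you defer as ``the bulk of the technical work,'' and carrying it out is strictly harder: one must track that rotationally invariant $(D-2)$-harmonic functions decay like $e^{-2d}$ into a collar, decompose the source, and keep all constants uniform in the surface, i.e.\ one ends up reproving Theorem \ref{delmuab} by a longer route. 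Without that expansion, the $O(\lla)$ remainder in the diagonal formula and the uniformity claims are unproven.

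The same missing ingredient breaks your off-diagonal claims whenever the pair sitting inside $\Delta$ coincides. For $R(\lambda\ssa,\lambda\ssb,\lambda\ssg,\lambda\ssg)$, which is allowed under ``at most pairs coinciding,'' your $L^1$-against-supremum pairing gives only $\|\Delta|\lambda\ssg|^2\|_\infty\le\||\lambda\ssg|^2\|_\infty=O(\llg^{-1})$, so the product with $\|\lambda\ssa\overline{\lambda\ssb}\|_1=O(\lla^{3/2}\llb^{3/2})$ is of size $O(\lla^{3/2}\llb^{3/2}\llg^{-1})$, which fails to be $O((\lla\llb)^{1/2}\llg)$ when $\llg$ is small compared to $\lla,\llb$; likewise the term $\int|\lambda\ssa|^2\Delta|\lambda\ssb|^2$ in $R(\lambda\ssa,\lambda\ssa,\lambda\ssb,\lambda\ssb)$ is left to an unexecuted cross-collar kernel analysis. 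In the paper both are handled by the same Theorem \ref{delmuab}: the main term of $\Delta\mu_*\overline{\mu_*}$ is supported on its own collar, where the complementary product has small main coefficients by Theorem \ref{gradpair}, and the $O(\ell_*^2)$ remainder is integrated against the collar expansions (\ref{muabsin}) and (\ref{muab1}). So the correct repair is not more Green's function estimates but the cutoff-comparison/maximum-principle mechanism; once Theorem \ref{delmuab} is in hand, the rest of your outline (including your $L^2$ shortcut) goes through.
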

\noindent Theorem \ref{mainthm} is established in the next section in the form of Theorem \ref{Iexp}.
  
\begin{corollary}\label{wpseccurv}
The root-length holomorphic sectional curvature satisfies 
\[
K(\lambda\ssa)\,=\,\frac{-3}{\pi\lla}\,+\,O(\lla).
\]
Given $\epsilon >0$, there is a positive constant $c_{g,n,\epsilon}$  such that for the 
surface systole $\Lambda(R)$, the sectional curvatures at $R\in\caT$ are bounded below by 
\[
\frac{-3-\epsilon}{\pi\Lambda(R)}\,-\, c_{g,n,\epsilon}.
\]
For $\alpha,\beta$ disjoint with $\lla,\llb\le\epsilon$, the sections spanned by 
$(J)\lambda\ssa,(J)\lambda\ssb$ have curvature bounded as $O(\epsilon^4)$.  
For $\alpha,\beta,\gamma,\delta$ simple closed geodesics, disjoint or coinciding, not all the same, the evaluation $R(\lambda\ssa,\lambda\ssb,\lambda\ssg,\lambda\ssd)$ is bounded as $O((\lla\llb\llg\lld)^{1/6})$.  For $c_0$ positive, the remainder term constants are uniform in the surface $R$ and independent of the topological type for $\lla,\llb,\llg,\lld\le c_0$.  
\end{corollary}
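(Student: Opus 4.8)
The plan is to read off each assertion of Corollary~\ref{wpseccurv} from the tensor evaluations of Theorem~\ref{mainthm}, the Hermitian expansion (\ref{wpherm}), and the Bochner formulas (\ref{seccurv}) and (\ref{holcurv}). For the holomorphic sectional curvature I would specialize (\ref{holcurv}) to a single index: for the holomorphic line spanned by $\lambda\ssa$,
\[
K(\lambda\ssa)\,=\,\frac{-R(\lambda\ssa,\lambda\ssa,\lambda\ssa,\lambda\ssa)}{\langle\langle\lambda\ssa,\lambda\ssa\rangle\rangle^2}.
\]
Substituting $R(\lambda\ssa,\lambda\ssa,\lambda\ssa,\lambda\ssa)=\frac{3}{16\pi^3\lla}+O(\lla)$ from Theorem~\ref{mainthm} and $\langle\langle\lambda\ssa,\lambda\ssa\rangle\rangle=\frac{1}{4\pi}+O(\lla^3)$ from (\ref{wpherm}), so that $\langle\langle\lambda\ssa,\lambda\ssa\rangle\rangle^2=\frac{1}{16\pi^2}(1+O(\lla^3))$, yields $K(\lambda\ssa)=-\frac{3}{\pi\lla}+O(\lla)$; the relative error $O(\lla^3)$ in the denominator only produces an $O(\lla^2)$ term, which is absorbed. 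This also shows the constant $3/\pi$ is sharp, since the systole direction attains it.

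The substantive assertion is the sharp lower bound for \emph{all} sectional curvatures. First I would dispose of the thick part by compactness: for $\Lambda(R)\ge\Lambda_0$ the bounds of Theorem~\ref{wppastbd} give sectional curvature $\ge -C$, absorbed into $c_{g,n,\epsilon}$ once $\Lambda_0=\Lambda_0(\epsilon)$ is fixed. On the thin part I would fix a bounded pants decomposition $\caP$ containing every geodesic of length $\le c_0$ and write the holomorphic parts of two real vectors spanning a plane as $Z=\sum_{\alpha\in\caP}c^\alpha\lambda\ssa$ and $W=\sum_{\alpha\in\caP}d^\alpha\lambda\ssa$. Normalizing $Z,W$ so the denominator of (\ref{seccurv}) is $1$, the near-orthonormality $\langle\langle\lambda\ssa,\lambda\ssb\rangle\rangle\approx\frac{\delta_{\alpha\beta}}{4\pi}$ of (\ref{wpherm}) bounds the coefficient sums $\sum|c^\alpha|^2,\sum|d^\alpha|^2$. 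The numerator of (\ref{seccurv}) then splits into a diagonal singular part $\sum_\alpha R(\lambda\ssa,\lambda\ssa,\lambda\ssa,\lambda\ssa)\cdot(\text{quartic in }c^\alpha,d^\alpha)$ and a remainder of off-diagonal evaluations $R(\lambda\ssa,\lambda\ssb,\lambda\ssg,\lambda\ssd)$ with not all indices equal. By Theorem~\ref{mainthm} the latter are uniformly bounded for lengths $\le c_0$, hence contribute at most $c_{g,n,\epsilon}$ (depending on $|\caP|=3g-3+n$). For the diagonal part I would insert $R(\lambda\ssa,\lambda\ssa,\lambda\ssa,\lambda\ssa)=\frac{3}{16\pi^3\lla}(1+O(\lla^2))$, use $\lla\ge\Lambda(R)$, and apply a Cauchy--Schwarz estimate $\sum_\alpha|c^\alpha|^4\le(\sum_\alpha|c^\alpha|^2)^2$ together with its analogue for the mixed quartic; this exactly matches the normalization to produce the constant $\frac{3}{\pi}$, giving diagonal part $\ge -\frac{3}{\pi\Lambda(R)}(1+O(c_0))$. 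Taking $c_0$ small makes $O(c_0)<\epsilon$. The hard part is precisely this: obtaining the \emph{sharp} constant $3/\pi$ (rather than the larger $8/\pi$ that the crude operator-norm route through Corollary~\ref{pantsbd} delivers) for general, non-holomorphic planes, which forces use of the exact leading coefficient of Theorem~\ref{mainthm} and verification that mixing directions only makes the diagonal contribution less negative.

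For the mixed sections spanned by $(J)\lambda\ssa,(J)\lambda\ssb$ with $\alpha,\beta$ disjoint, the holomorphic parts lie in $\spn\{\lambda\ssa\}$ and $\spn\{\lambda\ssb\}$, so every term of the numerator of (\ref{seccurv}) carries two $\alpha$-slots and two $\beta$-slots; these are the evaluations $R(\lambda\ssa,\lambda\ssa,\lambda\ssb,\lambda\ssb)$ and $R(\lambda\ssa,\lambda\ssb,\lambda\ssa,\lambda\ssb)$, bounded by $O(\epsilon^4)$ in Theorem~\ref{mainthm}, and dividing by the bounded-below area gives curvature $O(\epsilon^4)$. For the uniform bound $O((\lla\llb\llg\lld)^{1/6})$ I would note that every configuration with at most pairs coinciding already obeys the stronger $O((\lla\llb\llg\lld)^{1/2})$ of Theorem~\ref{mainthm}, hence the $1/6$ bound for bounded lengths. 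The only new case is a triple coincidence $R(\lambda\ssa,\lambda\ssa,\lambda\ssa,\lambda\ssb)$; by (\ref{WPcurv}) and self-adjointness of $\Delta$ this equals $\int_R|\lambda\ssa|^2\Delta(\lambda\ssa\overline{\lambda\ssb})\,dA$, so $\|\Delta\|_{C_0\to C_0}=1$ (Theorem~\ref{delprop}) bounds it by $\|\lambda\ssa\overline{\lambda\ssb}\|_\infty\|\lambda\ssa\|_{WP}^2$. Since $\lambda\ssa,\lambda\ssb$ concentrate on the disjoint collars $c(\alpha),c(\beta)$ (Proposition~\ref{thetprop}, Corollary~\ref{gradexp}), the product norm is $O((\lla\llb)^{1/2}(\lla+\llb))$, which for bounded lengths is $\le C(\lla^3\llb)^{1/6}$. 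The exponent $1/6$ is dictated by exactly this case: the length product $\lla^3\llb$ carries three factors of the small length while the estimate only gains $\lla^{1/2}$.
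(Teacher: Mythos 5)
Your treatment of three of the four assertions is correct and, for the holomorphic sectional curvature and the $(J)\lambda\ssa,(J)\lambda\ssb$ sections, is exactly the paper's route: formula (\ref{holcurv}) respectively (\ref{seccurv}) combined with expansion (\ref{wpherm}) and Theorem~\ref{mainthm}. For the $O((\lla\llb\llg\lld)^{1/6})$ bound you take a genuinely different path on the only new case, the triple coincidence: the paper divides the expansion $I_{\alpha\alpha\alpha\beta}=\frac{3}{2\pi^2}\langle\grad\lla,\grad\llb\rangle+O(\lla^4\llb^2)$ of Theorem~\ref{Iexp} by $2\ell_*^{1/2}$ per slot and invokes Theorem~\ref{gradpair} to get $O(\lla^{1/2}\llb^{3/2})$, whereas you bound $\int_R|\lambda\ssa|^2\Delta(\lambda\ssa\overline{\lambda\ssb})\,dA$ directly by the unit $C_0$-norm of $\Delta$ and the sup bound $\|\lambda\ssa\overline{\lambda\ssb}\|_\infty=O((\lla\llb)^{1/2}(\lla+\llb))$ coming from collar concentration. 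Your bound is slightly weaker but still suffices, and your observation that this case dictates the exponent $1/6$ is exactly right.

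The gap is in the main assertion, the lower bound $\frac{-3-\epsilon}{\pi\Lambda(R)}-c_{g,n,\epsilon}$ for all sections. Your argument rests on ``near-orthonormality $\langle\langle\lambda\ssa,\lambda\ssb\rangle\rangle\approx\frac{\delta_{\alpha\beta}}{4\pi}$ of (\ref{wpherm})'' for the whole Bers pants decomposition, so that Cauchy--Schwarz ``exactly matches the normalization to produce the constant $\frac{3}{\pi}$.'' But the remainder in (\ref{wpherm}) is $O(\lla^{3/2}\llb^{3/2})$, which is small only when \emph{both} lengths are small. A Bers decomposition contains geodesics of length up to the Bers constant $b_{g,n}$, and for those the off-diagonal pairings are genuinely of constant size (Theorem~\ref{gradpair} gives only positivity and boundedness there); all that survives is the uniform comparability of Corollary~\ref{pantsbd}, with a constant $C_{g,n}$ not close to $1$. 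Hence for a plane whose coefficients have mass on both short and moderate geodesics, your normalization step only yields $\|Z\|^2\ge\frac{C_{g,n}}{4\pi}\sum_\alpha|c^\alpha|^2$, and the bound degrades to $-3/(\pi C_{g,n}^2\Lambda(R))$: the sharp constant is lost precisely in the mixed case, which is the case that matters. The paper repairs this with a two-scale argument that your sketch omits: fix $\delta>0$; absorb into $c_{g,n,\epsilon}$ both all off-diagonal evaluations and the diagonal terms with $\lla\ge\delta$ (these are bounded by $3/(16\pi^3\delta)+O(1)$); and bound the denominator below by $\frac{(1-C'\delta)^2(1-\epsilon)^2}{16\pi^2}|||a|||^2\,|||b|||^2$ with $|||a|||^2=\sum_{\lla<\delta}|a^\alpha|^2+C\sum_{\lla\ge\delta}|a^\alpha|^2$, using (\ref{wpherm}) on the short block, (\ref{wpherm}) again to make the short--long cross terms $O(\delta\|a\|^2)$, and Corollary~\ref{pantsbd} on the long block. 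Only after this splitting do the $1/\lla$-singular diagonal terms with $\lla<\delta$ get matched against weight-one coefficients at constant $3/\pi$ up to $\epsilon$, with the $\lla\ge\delta$ contributions going into the additive constant. Without it, the step ``exactly matches the normalization'' is not justified and the claimed bound does not follow.
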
 
\begin{proof} The holomorphic curvature expansion follows for the single index value $\alpha$ evaluation from formula (\ref{holcurv}), expansion (\ref{wpherm}) and the first expansion of Theorem \ref{mainthm}.  The curvature bound for the span of $(J)\lambda\ssa,(J)\lambda\ssb$ follows immediately from (\ref{seccurv}), the approximate orthogonality of the tangents and the corresponding statement of the theorem.    We next consider the general $O(\ell^{1/6})$ bound.   Theorem \ref{Iexp} is presented for geodesic-length gradients; to obtain bounds for root-length gradients $\lambda_*=\grad\ell_*^{1/2}$, we divide by $2\ell_*^{1/2}$.  For exactly three geodesics coinciding, the leading term for $I(\lambda\ssa,\lambda\ssa,\lambda\ssa,\lambda\ssb)$ has magnitude $O(\lla^{1/2}\llb^{3/2})$ from Theorem \ref{gradpair}, while for at most pairs coinciding, $I(\lambda\ssa,\lambda\ssb,\lambda\ssg,\lambda\ssd)$ is bounded as $O((\lla\llb\llg\lld)^{1/2})$.  The $O(\ell^{1/6})$ bound follows.

To establish the general sectional curvature lower bound, we choose a bounded pants decomposition $\caP$, and consider a general two-dimensional section by writing $\mu(a)=\sum_{\alpha\in\caP}a^{\alpha}\lambda\ssa,\, \mu(b)=\sum_{\alpha\in\caP}b^{\alpha}\lambda\ssa$ for a basis.  We assume the basis is orthogonal 
$\Re\langle\langle\mu(a),\mu(b)\rangle\rangle=0$.  The sectional curvature is given by formula (\ref{seccurv}).  We consider a lower bound.  The denominator for sectional curvature is 
\[
4\langle\langle\mu(a),\mu(a)\rangle\rangle\langle\langle\mu(b),\mu(b)\rangle\rangle\,-\,2\Im \langle\langle\mu(a),\mu(b)\rangle\rangle^2,
\]
and the Cauchy-Schwarz inequality provides a lower bound of 
$2\|\mu(a)\|^2\|\mu(b)\|^2$. We noted in Corollary \ref{pantsbd} that the WP Hermitian norm $\|\mu(a)\|$ and Euclidean Hermitian norm $\|a\|$ are uniformly comparable.  We consider the numerator for sectional curvature.  The bounds for the non diagonal evaluations and remainder bound for the diagonal evaluations provide an expansion of the numerator
\[
2\Re \sum\limits_{\alpha\in\caP}\frac{3}{16\pi^3\lla}(a^{\alpha}\overline{b^{\alpha}})^2\,+\,O_{g,n}(\|a\|^2\|b\|^2),
\]
where the remainder term constant depends only on the Bers constant. The remainder provides a uniformly bounded contribution to the sectional curvature. We proceed and consider the explicit sum.  The lower bound for the sum is negative, realized for $b^{\alpha}=ia^{\alpha}$.  To bound the sectional curvature, we use the lower bound $2\|\mu(a)\|^2\|\mu(b)\|^2$ for the denominator and bound the two norms from below. Given $\delta>0$, use expansion (\ref{wpherm}) to bound the contributions of cross terms, to show that
\[
\big\|\sum_{\alpha\in\caP}a^{\alpha}\lambda\ssa\big\|^2\,=\,\big\|\sum_{\lla<\delta}a^{\alpha}\lambda\ssa\big\|^2\,+\,\big\|\sum_{\lla\ge\delta}a^{\alpha}\lambda\ssa\big\|^2\,+\,O(\delta\|a\|^2),
\]
where the remainder is bounded by $C'\delta\|a\|^2$ for a suitable positive constant.  
Given $\epsilon>0$, for $\delta>0$ sufficiently small, from expansion (\ref{wpherm}) the first term on the right is bounded below by
\[
\frac{(1-\epsilon)}{4\pi}\,\sum_{\lla<\delta}|a^{\alpha}|^2.
\]
From the comparability of Hermitian norms, the second sum on the right is bounded below by
\[
\frac{C}{4\pi}\,\sum_{\lla\ge\delta}|a^{\alpha}|^2
\]
for a suitable positive constant.  We combine observations to find that
\[
\big\|\sum_{\alpha\in\caP}a^{\alpha}\lambda\ssa\big\|^2\,\ge\,\frac{(1-C'\delta)(1-\epsilon)}{4\pi}\,||| a |||^2,
\]
for $|||a|||^2=\sum_{\lla<\delta}|a^{\alpha}|^2\,+\,C\sum_{\lla\ge\delta}|a^{\alpha}|^2$ and positive constants.  The contribution of the explicit sum to sectional curvature is now bounded below by
\begin{multline*}
\frac{\Re\sum_{\alpha\in\caP}\frac{3}{\pi\lla}(a^{\alpha}\overline{b^{\alpha}})^2}{(1-C'\delta)^2(1-\epsilon)^2 |||a|||^2|||b|||^2}
\ge \frac{-3}{(1-C'\delta)^2(1-\epsilon)^2 \pi}\, \min{\{\frac{1}{\Lambda(R)},\frac{1}{C^2\delta}\}},
\end{multline*}
the desired final bound.  
\end{proof}

To study the geometry of the moduli space $\caM(R)$, McMullen introduced a K\"{a}hler hyperbolic metric with K\"{a}hler form of the form $\omega_{WP}\,+\,c\sum_{\alpha\in\caP}\partial\overline\partial\mbox{Log}\lla$ \cite{McM}.  He found that the metric is comparable to the Teichm\"{u}ller metric.  To study canonical metrics on the moduli space, including the Teichm\"{u}ller and complete K\"{a}hler-Einstein metrics, Liu-Sun-Yau used the negative WP Ricci form as a comparison and reference metric \cite{LSY1,LSY2,LSY6,LSY3,LSY7}.  To understand curvature of canonical metrics, Liu-Sun-Yau also used a combination of the WP metric and its negative Ricci form. Expansions for geodesic-length functions and WP curvature enable explicit comparisons.  We showed in \cite{Wlbhv} that $\log\lla$ is strictly pluri subharmonic with
\[
\partial\overline\partial\log\lla\,=
\,\frac{\partial\lla}{\lla}\frac{\overline\partial\lla}{\lla}\,+\,O_+(\lla\|\ \|^2)\,=
\,\frac{4|\langle\langle\lambda\ssa,\ \rangle\rangle|^2}{\lla} \,+\,O_+(\lla\|\ \|^2),
\]
for positive remainders, where for $c_0$ positive, the remainder term constant is uniform in $R$ for $\lla\le c_0$.  Theorem \ref{mainthm} provides an immediate comparison with $R(\lambda\ssa,\lambda\ssa,\ ,\ )$.  Ricci curvature is given as a sum $\sum_jR(\mu_j,\mu_j,\ ,\ )$ over a unitary basis of $\caH(R)$.  From the proof of Corollary \ref{pantsbd}, for a bounded pants decomposition, the transformation of $\{\lambda\ssa\}_{\alpha\in\caP}$ to unitary bases is given by elements from a bounded set in $GL(\mathbb R)$. In particular the Ricci form is uniformly comparable to a bounded pants decomposition sum 
$\sum_{\alpha\in\caP}R(\lambda\ssa,\lambda\ssa,\ ,\ )$. 

The Teo lower bound for sectional and Ricci curvatures is $-2(\max_{\caH(R)}\|\mu\|_{\infty}/\|\mu\|_{WP})^2$ \cite{LPT}.  She provides a universal bound (independent of topological type) for the ratio of norms, with the ratio bounded as $2/\pi^{1/2}\Lambda(R)$ for small surface systole.  The Corollary \ref{pantsbd} bound $(2/\pi\Lambda(R))^{1/2}$ for the ratio depends on topological type in the determination of $\Lambda_0$, but is optimal in $\Lambda$-dependence.

\section{Expansion of the curvature tensor}\label{curvexp}

We consider the WP curvature tensor evaluated on geodesic-length gradients. The contribution to the curvature integral (\ref{WPcurv}) from the thick subset of the surface $R$ is bounded by applying Proposition \ref{thetprop} and the supremum bound for $\Delta$ from Theorem \ref{delprop}.  The contribution of collars is found by evaluating main terms in the meridian Fourier series for functions on a collar and then applying supremum bounds.  

We introduce notation to simplify the statements.   For a simple closed geodesic lifted to the imaginary axis, the polar angle $\theta$ of $\mathbb H$ has an intrinsic definition $\theta\ssa$ on a collar $c(\alpha)$, through the distance formula $d(\alpha,p)=\log(\csc\theta\ssa(p)+|\cot\theta\ssa(p)|)$. On a collar the products $\mua\overline\mub$ and $\Delta\mua\overline\mub$ have expansions with main terms respectively $\sin^4\theta\ssa$ and $\sin^2\theta\ssa$.  We now write $\sin\ssa\theta$ for the restriction of the sine of $\theta\ssa$ to the collar $c(\alpha)$; near the collar boundary $\sin\ssa\theta$ is bounded as $O(\lla)$.  We begin with the expansion for $\Delta\mua\overline\mub$.

\begin{theorem}\label{delmuab}
With the above notation, the operator $\Delta$ acting on pairs of simple geodesic-length gradients has the expansions on $R$
\begin{equation}\label{delmuab1}
2\Delta \mua\overline\mua\,=\,a\ssa(\alpha)^2\sin\ssa^2\theta\,+\,O(\lla^2)
\end{equation}
and for $\beta$ disjoint from $\alpha$,
\begin{equation}\label{delmuab2}
2\Delta \mua\overline\mub\,=\,a\ssa(\alpha)a\ssb(\alpha)\sin\ssa^2\theta\,+\,a\ssa(\beta)a\ssb(\beta)\sin\ssb^2\theta\,+\,O(\lla^2\llb^2).
\end{equation}
For $c_0$ positive, the remainder term constants are uniform in the surface  $R$ for $\lla,\llb\le c_0$.
\end{theorem}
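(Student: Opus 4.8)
The plan is to reduce both expansions to a single sup-norm estimate via the maximum principle for $(D-2)$, evaluating the rotationally invariant collar main term exactly by means of equation (\ref{dsin4}). On functions of the polar angle $\theta$ alone the Laplacian restricts to $D_\theta=\sin^2\theta\,\frac{d^2}{d\theta^2}$, so (\ref{dsin4}) reads $(D-2)\sin\ssa^2\theta=-4\sin\ssa^4\theta$ on the collar $c(\alpha)$. Writing $u$ for the product $\mua\overline\mua$ (resp. $\mua\overline\mub$), I would take as candidate main term the function $v$ equal to $a\ssa(\alpha)^2\sin\ssa^2\theta$ on $c(\alpha)$ in the diagonal case, and equal to $a\ssa(\alpha)a\ssb(\alpha)\sin\ssa^2\theta$ on $c(\alpha)$ together with $a\ssa(\beta)a\ssb(\beta)\sin\ssb^2\theta$ on $c(\beta)$ in the off diagonal case, each multiplied by a smooth cutoff $\chi$ supported in a band of bounded intrinsic width at the collar boundary and extended by zero. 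Setting $w=2\Delta u-v$, the relation $\Delta=-2(D-2)^{-1}$ gives $(D-2)(2\Delta u)=-4u$, whence $(D-2)w=-4u-(D-2)v$. Since $w$ is continuous and vanishes at the cusps, the unit operator norm of $\Delta$ on $C_0(R)$ (Theorem \ref{delprop}), equivalently the inequality $2\max_R|w|\le\max_R|(D-2)w|$, reduces the theorem to the sup bound $\|(D-2)w\|_\infty=O(\lla^2)$, respectively $O(\lla^2\llb^2)$.

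I would then estimate $(D-2)w=-4u-(D-2)v$ region by region. On the interior of $c(\alpha)$, where $\chi=1$, Corollary \ref{gradexp} expands each gradient as $a(\alpha)\omega$ plus an $O((e^{-2\pi\theta/\lla}+e^{2\pi(\pi-\theta)/\lla})\sin^2\theta)$ remainder, so $u$ agrees with $a\ssa(\alpha)^2\sin\ssa^4\theta$ (resp. $a\ssa(\alpha)a\ssb(\alpha)\sin\ssa^4\theta$), while $(D-2)v=-4a\ssa(\alpha)^2\sin\ssa^4\theta$ (resp. with the product coefficient) by (\ref{dsin4}); the leading terms cancel and the interior defect is $O(\lla^4)$ in the diagonal case and $O(\lla^2\llb^2)$ in the off diagonal case, exponentially small away from the boundary. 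On the collar complement $v=0$, while Proposition \ref{thetprop} bounds $\mua$ by $O(\lla^2)$ off $c(\alpha)$ and Lemma \ref{cuspbd} controls the cusps, giving $-4u=O(\lla^4)$ (resp. $O(\lla^2\llb^2)$). Finally on the cutoff band $\sin^2\theta=O(\lla^2)$ and the collar boundary has bounded geometry, so the terms in which $D$ differentiates $\chi$ produce $(D-2)v=O(\lla^2)$ in the diagonal case (resp. $O(\lla^3\llb^2)$, from the coefficient $a\ssa(\alpha)a\ssb(\alpha)=O(\lla\llb^2)$). Collecting the three regions yields the asserted sup bound, and the maximum principle closes the argument; the discrepancy $\sin\ssa^2\theta(1-\chi)=O(\lla^2)$ between $v$ and the stated main terms is absorbed into the remainder.

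The main obstacle is the collar analysis, where two effects must be shown harmless at the stated order: the non rotationally invariant Fourier modes of $u$ (the exponentially decaying remainders of Corollary \ref{gradexp}) and the cutoff at the collar boundary. Both are controlled by the design of the argument. The exact identity (\ref{dsin4}) removes the symmetric leading term before any inversion of $(D-2)$ is attempted, so that after subtracting $v$ only genuinely small quantities survive pointwise, and the unit norm of $\Delta$ on $C_0(R)$ converts this pointwise defect bound directly into the sup bound on $2\Delta u-v$; thus only the sizes $\sin^2\theta=O(\lla^2)$ at the boundary, $\mua=O(\lla^2)$ off the collar, and the exponentially small interior corrections enter. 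For the off diagonal expansion the additional point is that the two collars generate the two separate main terms: $c(\alpha)$ and $c(\beta)$ are disjoint, and by Proposition \ref{thetprop} each gradient is $O(\ell^2)$ on the other's collar, so no cross collar contribution survives at order $O(\lla^2\llb^2)$.
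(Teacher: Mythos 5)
Your proposal is correct and follows essentially the same route as the paper's proof: the same approximate characteristic function of the collar, the identity (\ref{dsin4}) to cancel the rotationally invariant main term before inverting $(D-2)$, the same regional estimates via Proposition \ref{thetprop} and Corollary \ref{gradexp} (with the derivative-of-cutoff terms controlled by $\sin^2\theta=O(\lla^2)$ on the boundary band), and the same conclusion via the sup-norm bound for $\Delta$ on $C_0(R)$. Your explicit remark that the discrepancy $\sin\ssa^2\theta(1-\chi)=O(\lla^2)$ is absorbed into the remainder is a point the paper leaves implicit, but it is the identical argument.
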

\begin{proof}  The collar $c(\alpha)$ is represented with the standard description in $\mathbb H$.  The approach is based on analyzing 
the $0^{th}$ meridian Fourier coefficient in the collar.   To apply equation (\ref{dsin4}) for a collar, we choose a smooth approximate characteristic function $\chi$ of $\mathbb R_+$ with support of the derivative $\chi'$ contained in $(-\log 2,0)$.  The function $\chi(\log(\sin\theta/\lla))$ is an approximate characteristic function of the collar $c(\alpha)$ in the half annulus $\mathcal A\ssa=\{1\le |z|<e^{\lla}\}$ for $z=re^{i\theta}\in\mathbb H$.  Consider the derivative equation 
\begin{multline*}
(D-2)\chi\sin^2\theta \\
	=\,\chi(D_{\theta}-2)\sin^2\theta+\chi'\sin^2\theta\,(3\cos^2\theta-\sin^2\theta)+\chi''\sin^2\theta\cos^2\theta \\
	=-4\chi\sin^4\theta\,+\,O(\lla^2),
\end{multline*}       
using equation (\ref{dsin4}) and that the support of $\chi',\chi''$ are bands about the collar boundary, where $\lla/2\le\sin\theta\le\lla$.  

For the first expansion we subtract the function $\chi a\ssa(\alpha)^2\sin^2\theta$ from $2\Delta\mua\overline\mua$ and consider an equation for the difference on a fundamental domain in $\mathcal A\ssa$, containing the collar $c(\alpha)$.  We find from applying the above expansion
\begin{multline}\label{d2delmua}
(D-2)(2\Delta\mua\overline\mua\,-\,\chi a\ssa(\alpha)^2\sin^2\theta)\\
=\,-4\mua\overline\mua\,+\,4\chi a\ssa(\alpha)^2\sin^4\theta\,+\,O(\lla^2).
\end{multline}
By Proposition \ref{thetprop}, on $R-c(\alpha)$ the first term on the right is bounded as $O(\lla^4)$. The support of the second term is contained in the collar $c(\alpha)$.  We apply Corollary \ref{gradexp} to bound the sum of the first two terms on $c(\alpha)$.  The exponential-sine remainder term function  $e^{-\pi\theta/2\ell}\sin\theta$ is decreasing on the interval $(\ell,\pi-\ell)$ with initial value bounded by $\ell$.  The remainder term function is also symmetric with respect to $\theta\rightarrow\pi-\theta$.  The remainder term of Corollary \ref{gradexp} contribution to $-4\mua\overline\mua+4\chi a\ssa(\alpha)^2\sin^4\theta$ is $O(e^{-2\pi\theta/\lla}\sin^4\theta)$, which we have is bounded as $O(\lla^4)$.   In summary, the right hand side of (\ref{d2delmua}) is pointwise bounded as $O(\lla^2)$.  Expansion (\ref{delmuab1}) follows by applying the operator $\Delta$ to the right hand side and applying the supremum bound of Theorem \ref{delprop}.   

The counterpart of equation (\ref{d2delmua}) for $\mua\overline\mub$ is in straightforward notation
\begin{multline}\label{d2delmuab}
(D-2)\big(2\Delta\mua\overline\mub-\chi a\ssa(\alpha)a\ssb(\alpha)\sin\ssa^2\theta-\chi a\ssa(\beta)a\ssb(\beta)\sin\ssb^2\theta\big)\\
=-4\mua\overline\mub+4\chi a\ssa(\alpha)a\ssb(\alpha)\sin\ssa^4\theta+4\chi a\ssa(\beta)a\ssb(\beta)\sin\ssb^4\theta + O(\lla^2\llb^2),
\end{multline}
where from Theorem \ref{gradpair}: $a\ssa(\alpha), a\ssb(\beta)$ are bounded, $a\ssb(\alpha)$ is bounded as $O(\lla\llb^2)$ and $a\ssa(\beta)$ is bounded as $O(\lla^2\llb)$.   By Proposition \ref{thetprop}, on $R-c(\alpha)-c(\beta)$ the first term on the right is $O(\lla^2\llb^2)$.  The supports of the second and third terms on the right are contained in the respective collars.   We again apply Corollary \ref{gradexp} to bound the sum of the terms on the collars.  On $c(\alpha)$ the remainder term contribution is bounded by $O((\llb^2/\lla^2+a\ssb(\alpha))(e^{-2\pi\theta/\lla}+e^{2\pi(\theta-\pi)/\lla})\sin^4\theta)$ and as above the exponential-sine product is bounded by $\lla^4$, leading to the overall bound of $O(\lla^2\llb^2)$.  The contributions on $c(\beta)$ are similarly bounded.  The desired expansion follows.  
\end{proof}

The following is a pointwise form of the expansion for the pairing $\langle\grad\lla,\grad\llb\rangle$.  

\begin{corollary}\label{muabbd}
With the above notation, the products of geodesic-length gradients have the expansions on $R$
\[
\mua\overline\mua\,=\,a\ssa(\alpha)^2\sin^4\ssa\theta\,+\,O(\lla^2)
\]
and for $\beta$ disjoint from $\alpha$,
\[
\mua\overline\mub\,=\,a\ssa(\alpha)a\ssb(\alpha)\sin^4\ssa\theta\,+\,a\ssa(\beta)a\ssb(\beta)\sin^4\ssb\theta\,+\,O(\lla^2\llb^2).
\]
For $\beta$ disjoint from $\alpha$, the product $\mua\overline\mub$ is bounded as $O(\lla\llb)$.  For $c_0$ positive, the remainder term constants are uniform in the surface  $R$ for $\lla,\llb\le c_0$.
\end{corollary}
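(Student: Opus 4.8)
The plan is to obtain the products directly from the collar expansions of Corollary~\ref{gradexp}, without inverting $(D-2)$, by treating the two collars $c(\alpha)$ and $c(\beta)$ and their complement separately and then assembling. Recalling that as harmonic Beltrami differentials $\mua=\grad\lla$, Corollary~\ref{gradexp} furnishes on $c(\alpha)$ the decompositions $\mua=a\ssa(\alpha)\omega+R_\alpha$ and $\mub=a\ssb(\alpha)\omega+R'_\alpha$, with real main coefficients, where $R_\alpha=O((e^{-2\pi\theta/\lla}+e^{2\pi(\theta-\pi)/\lla})\sin^2\theta)$ and $R'_\alpha=O((\llb/\lla)^2(e^{-2\pi\theta/\lla}+e^{2\pi(\theta-\pi)/\lla})\sin^2\theta)$. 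Since $|\omega|=\sin^2\theta$ one has $\omega\overline\omega=\sin^4\theta=\sin^4\ssa\theta$ on $c(\alpha)$.

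First I would work on $c(\alpha)$ and multiply the two decompositions. Using the reality of the main coefficients, the leading term is $a\ssa(\alpha)a\ssb(\alpha)\sin^4\ssa\theta$, and there remain two cross terms and one remainder product. Each cross term carries a factor $|\omega|=\sin^2\theta$ together with one exponential-sine factor, and the remainder product carries two such factors. The decisive estimate is the identity $e^{-2\pi\theta/\lla}\sin^4\theta=(e^{-\pi\theta/2\lla}\sin\theta)^4$ together with the symmetric estimate under $\theta\mapsto\pi-\theta$; since $e^{-\pi\theta/2\lla}\sin\theta$ is decreasing on the collar with $O(\lla)$ initial value, as already used in the proof of Theorem~\ref{delmuab}, every exponential-sine factor multiplied by $\sin^4\theta$ is $O(\lla^4)$. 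Inserting the coefficient bounds $a\ssa(\alpha)=O(1)$ and $a\ssb(\alpha)=O(\lla\llb^2)$ from Theorem~\ref{gradpair}, and noting that the worst, remainder-times-remainder, term carries the factor $(\llb/\lla)^2$, one finds each off-diagonal term is $O(\lla^2\llb^2)$ or smaller. As $\sin\ssb\theta$ is supported on $c(\beta)$ and vanishes on $c(\alpha)$, the two-term main part of the statement reduces to its single surviving term here, and the expansion holds on $c(\alpha)$.

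The collar $c(\beta)$ is treated by the identical computation with $\alpha$ and $\beta$ interchanged, using $a\ssb(\beta)=O(1)$, $a\ssa(\beta)=O(\lla^2\llb)$ and the expansion of $\mua$ on $c(\beta)$; there only the $\sin^4\ssb\theta$ term survives. On the complement $R-c(\alpha)-c(\beta)$ both $\sin^4\ssa\theta$ and $\sin^4\ssb\theta$ vanish, and Proposition~\ref{thetprop} gives $\mua=O(\lla^2)$ and $\mub=O(\llb^2)$, hence $\mua\overline\mub=O(\lla^2\llb^2)$. Assembling the three regions yields the stated expansion, and the diagonal case $\mua\overline\mua$ is the specialization $\beta=\alpha$ on the single collar $c(\alpha)$. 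The crude bound $\mua\overline\mub=O(\lla\llb)$ then follows from the same expansion by inserting the coefficient bounds: the surviving main term is $O(\lla\llb^2)$ on $c(\alpha)$ and $O(\lla^2\llb)$ on $c(\beta)$, while the remainder is $O(\lla^2\llb^2)$ throughout, all of which are $O(\lla\llb)$ for $\lla,\llb\le c_0$.

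I expect the only real obstacle to be the bookkeeping of the exponential-sine products, in particular verifying that in the remainder-times-remainder term on $c(\alpha)$ the factor $(\llb/\lla)^2$ is exactly compensated by the $\lla^4$ gain from $e^{-2\pi\theta/\lla}\sin^4\theta$, leaving $O(\lla^2\llb^2)$ rather than something larger. This is the same mechanism that drives Theorem~\ref{delmuab}, so no genuinely new estimate is required and the content is organizational.
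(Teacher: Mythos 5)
Your proposal is correct and takes essentially the same approach as the paper: the paper's own proof is precisely the two-line sketch that the expansions follow from Corollary \ref{gradexp} together with the exponential-sine bound $e^{-\pi\theta/2\ell}\sin\theta\le\ell$ on $(\ell,\pi-\ell)$, with the general $O(\lla\llb)$ bound coming from the Theorem \ref{gradpair} bounds on the coefficients $a\ssa$, $a\ssb$. Your write-up simply supplies the collar-by-collar bookkeeping (including the use of Proposition \ref{thetprop} on the collar complement) that the paper leaves implicit, and that bookkeeping is carried out correctly.
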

\begin{proof}  The expansions follow from Corollary \ref{gradexp} and the exponential-sine bound $e^{-\pi\theta/2\ell}\sin\theta\le\ell$ on $(\ell,\pi-\ell)$.  The general bound 
for $\mua\overline\mub$ follows from the Theorem \ref{gradpair} bounds for $a\ssa$ and $a\ssb$.
\end{proof}

A basic calculation is for the variation of the area element by a deformation map.   Ahlfors found that the first variation of the hyperbolic area element vanishes for harmonic Beltrami differentials \cite{Ahsome}.  The second variation of the hyperbolic area element is an ingredient in the calculation of curvature for the WP metric and for the hyperbolic metric on the vertical line bundle of the universal curve \cite{Wlchern, Wlhyp}.   The second variation formula for both quasi conformal and harmonic deformation maps is
\[
\ddot{dA}[\mu,\mu]\,=\,\frac{d^2(f^{\epsilon\mu})^*dA}{d\epsilon^2}\bigg|_{\epsilon=0}\,=\,2(-\mu\overline\mu\,+\,\Delta\mu\overline\mu)\,dA,
\]
for $\mu\in\mathcal H(R)$ \cite{Wfharm, Wlchern, Wlhyp}.  Theorem \ref{delmuab} and Corollary \ref{muabbd} combine to provide an expansion for the second variation for geodesic-length gradients
\[
\ddot{dA}[\mua,\mua]\,=\,a\ssa(\alpha)^2(\sin\ssa^2\theta\,-\,2\sin\ssa^4\theta\,+\,O(\lla^2))\,dA.
\]
We have the following for the curvature integral.

\begin{theorem}\label{Iexp}
For $\alpha,\beta,\gamma,\delta$ simple closed geodesics, the integral
\[
I_{\alpha\beta\gamma\delta}\,=\,\int_R\mua\overline\mub\Delta\mug\overline\mud\,dA
\]
has the symmetries $I_{\alpha\beta\gamma\delta}=I_{\gamma\delta\alpha\beta}$ and 
$\overline{I_{\alpha\beta\gamma\delta}}=I_{\beta\alpha\delta\gamma}$.   The integral for the geodesics coinciding satisfies
\[
I_{\alpha\alpha\alpha\alpha}\,=\,\frac{3}{\pi^3}\lla\,+\,O(\lla^3),
\]
for three geodesics coinciding satisfies
\[
I_{\alpha\alpha\alpha\beta}\,=\,\frac{3}{2\pi^2}\langle\grad\lla,\grad\llb\rangle\,+\,O(\lla^4\llb^2),
\]
and all, at most pairs of geodesics coinciding, evaluations $I_{\alpha\beta\gamma\delta}$ are bounded as $O(\lla\llb\llg\lld)$.  The integrals $I_{\alpha\alpha\beta\beta}$ and $I_{\alpha\beta\alpha\beta}$ for coinciding pairs with $\lla=\llb$ are bounded as $O(\lla^6)$.  For $c_0$ positive, the remainder term constants are uniform in the surface  $R$ for $\lla,\llb,\llg,\lld\le c_0$.   
\end{theorem}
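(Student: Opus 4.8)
The plan is to substitute the collar expansions of Theorem~\ref{delmuab} and Corollary~\ref{muabbd} into the integral $I_{\alpha\beta\gamma\delta}$, reduce the diagonal and three-coinciding evaluations to a single explicit one-dimensional collar integral, and localize every cross-interaction between distinct geodesics to a collar boundary. First I would dispose of the two symmetries, which are formal. The integration by parts formula makes $\Delta$ self-adjoint on $L^2(R,dA)$, so $I_{\alpha\beta\gamma\delta}=\int_R\Delta(\mua\overline\mub)\,\mug\overline\mud\,dA=I_{\gamma\delta\alpha\beta}$; and since the Green's function $G$ of Theorem~\ref{delprop} is real we have $\overline{\Delta h}=\Delta\overline h$, whence conjugating gives $\overline{I_{\alpha\beta\gamma\delta}}=\int_R\mub\overline\mua\,\Delta(\mud\overline\mug)\,dA=I_{\beta\alpha\delta\gamma}$.

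For the main evaluations I would use that on a collar $c(\eta)$ the relevant products are rotationally invariant to leading order: $\mu_\eta\overline{\mu_\eta}=a\sset(\eta)^2\sin\sset^4\theta+O(\llet^2)$ and $2\Delta\mu_\eta\overline{\mu_\eta}=a\sset(\eta)^2\sin\sset^2\theta+O(\llet^2)$, with $a\sset(\eta)=2/\pi+O(\llet^3)$. The single model integral needed is $\int_{c(\eta)}\sin^6\theta\,dA=\llet\int_0^\pi\sin^4\theta\,d\theta+O(\llet^6)=\tfrac{3\pi}{8}\llet+O(\llet^6)$, evaluated in the half-annulus coordinates where $dA=dr\,d\theta/(r\sin^2\theta)$ and $\int_1^{e^{\llet}}dr/r=\llet$. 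Multiplying the two diagonal main terms gives integrand $\tfrac12 a\ssa(\alpha)^4\sin\ssa^6\theta$, and the model integral produces $\tfrac12\cdot\tfrac{16}{\pi^4}\cdot\tfrac{3\pi}{8}\lla=\tfrac{3}{\pi^3}\lla$; the products of a main term against a remainder, and of two remainders, each integrate to $O(\lla^3)$ using $\int_{c(\alpha)}\sin^{2k}\theta\,dA=O(\lla)$, so $I_{\alpha\alpha\alpha\alpha}=\tfrac{3}{\pi^3}\lla+O(\lla^3)$.

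The off-diagonal statements follow once the cross terms are controlled, and this is where the work concentrates. When neither factor has a repeated index, so $\alpha\ne\beta$ and $\gamma\ne\delta$, Corollary~\ref{muabbd} gives $\|\mua\overline\mub\|_\infty=O(\lla\llb)$ and the maximum principle of Theorem~\ref{delprop} gives $\|\Delta(\mug\overline\mud)\|_\infty=O(\llg\lld)$, so the integral is $O(\lla\llb\llg\lld)$ at once. When a factor has a repeated index I would replace its main term on $c(\alpha)$ using the collar identity $(D-2)\sin\ssa^2\theta=-4\sin\ssa^4\theta$ of equation~(\ref{dsin4}), so that, modulo the cutoff $\chi$ of Theorem~\ref{delmuab}, $\mua\overline\mua\approx-\tfrac1{\pi^2}(D-2)\sin\ssa^2\theta$. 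Integrating by parts over $c(\alpha)$ and using $(D-2)\Delta=-2\,\mathrm{id}$, the integral over $c(\alpha)$ becomes a bulk term $\tfrac2{\pi^2}\int_{c(\alpha)}\sin\ssa^2\theta\,\mug\overline\mud\,dA$ plus the boundary integral $\int_{\partial c(\alpha)}(\sin\ssa^2\theta\,\partial_n h-h\,\partial_n\sin\ssa^2\theta)\,ds$ with $h=\Delta(\mug\overline\mud)$. For $I_{\alpha\alpha\alpha\beta}$ the factor $\mua\overline\mub$ has collar main term $a\ssa(\alpha)a\ssb(\alpha)\sin\ssa^4\theta$, so the bulk term reproduces $\tfrac{2}{\pi^2}a\ssa(\alpha)a\ssb(\alpha)\tfrac{3\pi}{8}\lla=\tfrac{3}{2\pi^2}\langle\grad\lla,\grad\llb\rangle$ after $a\ssb(\alpha)\lla=\langle\grad\llb,\grad\lla\rangle$, while the boundary integral is $O(\lla^4\llb^2)$ since $\sin\ssa^2\theta$ and $\partial_n\sin\ssa^2\theta$ are $O(\lla^2)$ on the length-$O(1)$ curve $\partial c(\alpha)$ and $h$ there is $O(\lla^2\llb^2)$. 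For the at-most-pairs evaluations the distinct main terms sit on disjoint collars, the bulk term is negligible by Corollary~\ref{gradexp}, and the same boundary estimate yields $O(\lla\llb\llg\lld)$. The equivalent bookkeeping through the $0^{th}$ meridian Fourier coefficient and the one-dimensional Green's function $\mathbf{G}(\theta,\theta_0)$, whose homogeneous solution $1-\theta\cot\theta$ is forced to carry a coefficient a factor $\lla$ smaller on the thin collar, gives the same gain.

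The hardest step is the sharp bound $I_{\alpha\alpha\beta\beta},I_{\alpha\beta\alpha\beta}=O(\lla^6)$ for balanced coinciding pairs $\lla=\llb$. The boundary localization above only delivers $O(\lla^2\llb^2)=O(\lla^4)$, so two further orders must come from a cancellation of the leading boundary contribution. To capture it I would expand $h=\Delta(\mu_\beta\overline{\mu_\beta})$ near $\partial c(\alpha)$ one order beyond its supremum bound and show that its $O(\llb^2)$ value and normal derivative enter the combination $\sin\ssa^2\theta\,\partial_n h-h\,\partial_n\sin\ssa^2\theta$ only through a difference whose leading part cancels; the divergence-theorem identity $\int_{\partial c(\alpha)}\partial_n h\,ds=\int_{c(\alpha)}Dh\,dA$ together with $(D-2)h=-2\mu_\beta\overline{\mu_\beta}$, negligible on $c(\alpha)$, is the natural tool for extracting this cancellation. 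Isolating it cleanly, rather than the preceding reductions, which are routine given Theorem~\ref{delmuab}, Corollary~\ref{muabbd}, Corollary~\ref{gradexp} and the explicit collar integral, is the principal obstacle.
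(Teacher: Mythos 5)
Your symmetries, the diagonal evaluation $I_{\alpha\alpha\alpha\alpha}$, the three\-/coinciding evaluation, and the generic $O(\lla\llb\llg\lld)$ bound are all essentially right, and the core mechanism is the paper's: localization to collars, the identity $(D_\theta-2)\sin^2\theta=-4\sin^4\theta$, the model integral $\int_{c(\eta)}\sin^6\theta\,dA=\tfrac{3\pi}{8}\llet+O(\llet^6)$, and sup-norm bounds for $\Delta$. Your one organizational departure is to integrate by parts over $c(\alpha)$ and track boundary terms on $\partial c(\alpha)$. The paper never produces boundary terms: it multiplies the model function by the smooth cutoff $\chi$ and applies the sup-norm bound of $\Delta$ (Theorem \ref{delprop}) to the resulting compactly supported $O(\lla^2)$ defect; that is exactly how Theorem \ref{delmuab} is proved, and then $I_{\alpha\alpha\alpha\beta}$ is assembled from expansion (\ref{delmuab1}) together with (\ref{muabsin}) and (\ref{muab1}). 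Your boundary route is workable but costs more: it requires pointwise control of $h=\Delta(\mug\overline\mud)$ \emph{and} $\partial_n h$ on $\partial c(\alpha)$, which needs Proposition \ref{grest} plus interior elliptic gradient estimates, machinery the paper's cutoff argument avoids.

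The genuine gap is the one you flagged: the $O(\lla^6)$ bounds. But the missing ingredient is not a cancellation in $\sin\ssa^2\theta\,\partial_n h-h\,\partial_n\sin\ssa^2\theta$; it is the quadratic smallness of the cross-coefficients from Theorem \ref{gradpair}, namely $a\ssb(\alpha)=\langle\grad\lla,\grad\llb\rangle/\lla=O(\lla\llb^2)$ and $a\ssa(\beta)=O(\lla^2\llb)$, fed into \emph{both} factors of the integrand. For $I_{\alpha\beta\alpha\beta}$ no cancellation is needed at all: integrating the expansion of Corollary \ref{muabbd} (collar main terms with coefficients $a\ssa(\alpha)a\ssb(\alpha)=O(\lla\llb^2)$, $a\ssa(\beta)a\ssb(\beta)=O(\lla^2\llb)$, and $\int_{c}\sin^4\theta\,dA=O(\ell)$) gives the $L^1$ bound $\int_R|\mua\overline\mub|\,dA=O(\lla^2\llb^2)$, whence $|I_{\alpha\beta\alpha\beta}|\le\|\Delta(\mua\overline\mub)\|_\infty\int_R|\mua\overline\mub|\,dA=O(\lla\llb)\cdot O(\lla^2\llb^2)=O(\lla^3\llb^3)$; this $L^\infty\times L^1$ pairing is what the paper's terse ``remaining cases'' sentence encodes, and it is precisely the step where your proposal spends one factor on a crude sup bound and loses two orders. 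For $I_{\alpha\alpha\beta\beta}$ the paper runs the case $\beta^2\alpha^2$: the main term of $\Delta(\mua\overline\mua)$ is supported on $c(\alpha)$ and is paired via (\ref{muabsin}) against $\mub\overline\mub$, whose rotationally invariant part on $c(\alpha)$ carries the coefficient $a\ssb(\alpha)^2=O(\lla^2\llb^4)$, so the leading collar term is $\tfrac{3\pi}{16}a\ssa(\alpha)^2a\ssb(\alpha)^2\lla+O_{\beta\beta\alpha}=O(\lla^2\llb^4)$, with the remaining collar and off-collar pieces estimated from the remainder of (\ref{delmuab1}) (in its strengthened exponential-distance form noted after Theorem \ref{Iexp}), (\ref{muab1}), and Proposition \ref{thetprop}. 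Finally, note that your planned cancellation hunt cannot succeed for this case even in principle: $I_{\alpha\alpha\beta\beta}=\int_R|\mua|^2\,\Delta(|\mub|^2)\,dA$ is an integral of nonnegative quantities, since the Green's kernel is positive, so any admissible bound must come from smallness of the integrand, never from cancellation.
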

\begin{proof}  The first symmetry is a consequence of $\Delta$ being self-adjoint.  The second symmetry is immediate.  We develop a general expansion for the integrand and then consider cases based on the patterns of geodesics.   From Corollary \ref{gradexp} on a collar $c(\eta)$, we have
\[
\mua\overline\mub\,=\,a\ssa(\eta)a\ssb(\eta)\sin_{\sset}^4\theta\,+\,
O\big(\mathbf c_{\alpha\beta\eta}(e^{-2\pi\theta/\llet}+e^{2\pi(\pi-\theta)/\llet})\sin\sset^4\theta\big),
\]
for
\[
\mathbf c_{\alpha\beta\eta}\,=\,a\ssa(\eta)\llb^2/\llet^2+a\ssb(\eta)\lla^2/\llet^2+\lla^2\llb^2/\llet^4.
\]
We consider the collar integrals of the product with the terms in the $\Delta\mug\overline\mud$ expansion of Theorem \ref{delmuab}.    
The leading term of $\Delta\mug\overline\mud$ is $\sin^2\theta$.  The resulting product integral is
\begin{multline*}
\int_{c(\eta)}\mua\overline\mub\sin^2\theta dA\\ =\,a\ssa(\eta)a\ssb(\eta)\int_{c(\eta)}\sin^6\theta dA\,+\, O\big(\mathbf c_{\alpha\beta\eta}\int_{c(\eta)}(e^{-2\pi\theta/\llet}+e^{2\pi(\pi-\theta)/\llet})\sin^6\theta dA\big).
\end{multline*}
The collar is $c(\eta)=\{1\le r\le e^{\llet},\, \llet\le\theta\le\pi-\llet\}\subset\mathbb H$ and the hyperbolic area element is $dA=\csc^2\theta\,d\theta dr/r$.  For the first integral on the right the $\theta$  intervals $(0,\llet)$ and $(\pi-\llet,\pi)$ are included to find that
\[
\int_{c(\eta)}\sin^6\theta dA\,=\,\frac{3\pi}{8}\llet\,+\,O(\llet^6).
\]
The second integral on the right is symmetric with respect to $\theta\rightarrow\pi-\theta$.  The inequality $\sin\theta\le \theta$ is applied to find that
\[
\int_{c(\eta)}e^{-2\pi\theta/\llet}\sin^6\theta dA\,=\,O(\llet^6).
\]
We combine these considerations to find that
\begin{equation}\label{muabsin}
\int_{c(\eta)}\mua\overline\mub \sin^2\theta dA\,=\,\frac{3\pi}{8}a\ssa(\eta)a\ssb(\eta)\llet\,+\,O_{\alpha\beta\eta},
\end{equation}
for $O_{\alpha\beta\eta}=a\ssa(\eta)a\ssb(\eta)\llet^6+\mathbf c_{\alpha\beta\eta}\llet^6$.   
By Theorem \ref{gradpair}, the remainder $O_{\alpha\beta\eta}$ is: $O(\lla^4\llet^2)$ for $\alpha=\beta$, $\eta$ coinciding with $\alpha$ or disjoint, and is $O(\lla^2\llb^2\llet^2)$ for $\alpha$ disjoint from $\beta$, $\eta$ coinciding with one of $\alpha,\beta$ or not.  In all cases 
$O_{\alpha\beta\eta}$ is bounded as $O(\lla^2\llb^2\llet^2)$.  By the same approach the collar integral corresponding to the product with the remainder term of $\Delta\mug\overline\mud$ is
\begin{multline}\label{muab1}
\int_{c(\eta)}\mua\overline\mub dA 
\\=\,a\ssa(\eta)a\ssb(\eta)\int_{c(\eta)}\sin^4\theta dA+
O\big(\mathbf c_{\alpha\beta\eta}\int_{c(\eta)}(e^{-2\pi\theta/\llet}+e^{2\pi(\pi-\theta)/\llet})\sin^4\theta dA\big)
\\ =\,O\big(a\ssa(\eta) a\ssb(\eta)(\llet+\llet^4)+\mathbf c_{\alpha\beta\eta}\llet^4\big)\,=\,O(a\ssa(\eta) a\ssb(\eta)\llet)\,+\,O_{\alpha\beta\eta}/\llet^2,
\end{multline}  
for the same remainder bound $O_{\alpha\beta\eta}$.

We are ready to combine the expansions and estimates to find the contributions for the possible patterns of geodesics.  In general for the evaluation of $I_{\alpha\beta\gamma\delta}$, the expansions of Theorem \ref{delmuab} for $\Delta\mu_*\overline{\mu_*}$ are applied for the indices $\gamma\delta$ and the expansions (\ref{muabsin}) and (\ref{muab1}) 
for $\mu_*\overline{\mu_*}$ are applied for the indices $\alpha\beta$.

\underline{The main cases $\alpha\alpha\alpha\alpha$ and $\alpha\beta\alpha\alpha$.}   Expansion (\ref{delmuab1}) is combined with expansions (\ref{muabsin}) and (\ref{muab1}) to find the contribution of the collar $c(\alpha)$.  The leading term is
\[
\frac12a\ssa(\alpha)^2\int_{c(\alpha)}\mua\overline\mub\sin^2\theta dA\,=\,\frac{3\pi}{16}a\ssa(\alpha)^3a\ssb(\alpha)\lla\,+\,O_{\alpha\beta\alpha}.
\]
The remainder is bounded as $O(\lla^4\llb^2)$.   Using expansion (\ref{muab1}), the contribution of the remainder term from expansion (\ref{delmuab1}) is bounded as $O(\lla^3)$ for $\alpha=\beta$ and as $O(\lla^4\llb^2)$ for $\alpha,\beta$ disjoint.    
The contribution of $R-c(\alpha)$ is bounded by combining expansion (\ref{delmuab1}) with Proposition \ref{thetprop}.  The contribution is bounded as $O(\lla^4\llb^2)$.   Finally Theorem \ref{gradpair} is applied to evaluate the product $a\ssa(\alpha)^3a\ssb(\alpha)$ of coefficients for $\alpha$ and $\beta$ either coinciding or disjoint.

\underline{The cases $\beta^2\alpha^2$ and $\beta\gamma\alpha^2$ for $\alpha$ distinct from 
$\beta,\gamma$.}   Expansion (\ref{delmuab1}) is combined with expansions (\ref{muabsin}) 
and (\ref{muab1}) to find the collar contribution.  The leading term provides an integral 
\[
\frac12a\ssa(\alpha)^2\int_{c(\alpha)}\mub\overline\mug\sin^2\theta dA\,=\,\frac{3\pi}{16}a\ssa(\alpha)^2a\ssb(\alpha)a\ssg(\alpha)\lla\,+\,O_{\beta\gamma\alpha},
\]
which is bounded as $O(\lla^2\llb^2\llg^2)$.  The remainder bound from (\ref{delmuab1}) combines with expansion (\ref{muab1}) to provide the remaining collar bound.  The contribution of $R-c(\alpha)$ is bounded by combining expansion (\ref{delmuab1}) and Proposition \ref{thetprop}.

\underline{The remaining cases $\alpha\beta\alpha\beta$, $\alpha\beta\alpha\gamma$ and $\alpha\beta\gamma\delta$.}   The Corollary \ref{muabbd} bound that $\mua\overline\mub$, $\alpha$ disjoint from $\beta$, is bounded as $O(\lla\llb)$ and boundedness of the operator $\Delta$ in 
$C_0$ provide the desired bounds.  
\end{proof}

We remark that in Theorem \ref{delmuab} the remainder terms can be improved to include a factor of the inverse exponential-distance to the thick subset of $R$.  Specifically the $O(\lla^2)$ term on the right side of (\ref{d2delmua}) and the $O(\lla^2\llb^2)$ term on the right side of (\ref{d2delmuab}) have compact support in a neighborhood of the collar boundary, where the injectivity radius is bounded below by a positive constant.   To estimate the operator $\Delta$ applied to the remainders, Proposition \ref{grest} is applied with $z_0$ chosen in the remainder support.  The resulting estimate includes an exponential-distance factor. The explicit terms on the right hand sides of (\ref{d2delmua}) and (\ref{d2delmuab}) are bounded by the exponential-sine function $e^{-2\pi\theta/\lla}\sin^4\theta$.  To estimate the operator $\Delta$ applied to the explicit terms, Proposition \ref{grest} is applied with $z_0$ corresponding to the variable $\theta$, and the $\theta$ integral is estimated directly.  The resulting estimate includes an exponential-distance factor.  The improved form of (\ref{delmuab1}) leads to the improved remainder $O(\lla^4)$ for the $I_{\alpha\alpha\alpha\alpha}$ expansion.     

\section{Continuity of the pairing and curvature tensor}
\label{conWP}

We follow the general exposition of \cite{Wlext,Wlcbms}.  The WP completion of Teichm\"{u}ller space is the augmented Teichm\"{u}ller space $\Tbar$.  The partial compactification $\Tbar$ is described in terms of Fenchel-Nielsen coordinates and in terms of the Chabauty topology for $PSL(2;\mathbb R)$ representations.  The strata of $\Tbar$ correspond to collections of {\em vanishing lengths} and describe Riemann surfaces with nodes in the sense of Bers, \cite{Bersdeg}.  We use root-length gradients as a local frame for the tangent bundle for a neighborhood of a stratum point.   The behavior of the pairing for a frame is presented in (\ref{wpherm}) and the behavior of the connection for a frame is presented in \cite[Theorem 4.6]{Wlext}.  

The points of the Teichm\"{u}ller space $\mathcal T$ are equivalence classes 
$\{(R,f)\}$ of surfaces with reference homeomorphisms $f:F\rightarrow R$.  
The {\em complex of curves} $C(F)$ is defined as follows.  The vertices of
$C(F)$ are the free homotopy classes of homotopically nontrivial, non peripheral,
simple closed curves on $F$.  An edge of the complex consists of a pair of
homotopy classes of disjoint simple closed curves.  A $k$-simplex consists of
$k+1$ homotopy classes of mutually disjoint simple closed curves.  A maximal
simplex is a pants decomposition. The mapping class group $Mod$ acts on the complex $C(F)$.  

The Fenchel-Nielsen coordinates for $\mathcal T$ are given in terms of geodesic-lengths and
lengths of auxiliary geodesic segments, \cite{Abbook,Busbook,ImTan}.  A pants decomposition
 $\mathcal{P}=\{\alpha_1,\dots,\alpha_{3g-3+n}\}$ decomposes the
reference surface $F$ into $2g-2+n$ components, each homeomorphic to
a sphere with a combination of three discs or points removed.  A marked
Riemann surface $(R,f)$ is likewise decomposed into pants by the geodesics
representing the elements of $\mathcal P$.  Each component pants, relative to its hyperbolic
metric, has a combination of three geodesic boundaries and cusps.  For each
component pants the shortest geodesic segments connecting boundaries determine
 designated points  on each boundary.  For each geodesic $\alpha$ in the pants
decomposition of $R$, a parameter $\tau_{\alpha}$ is defined as the displacement along
the geodesic between designated points, one  for each side of the geodesic.
For marked Riemann surfaces close to an initial reference marked Riemann surface, the
displacement $\tau_{\alpha}$ is the distance between the designated points; in
general the displacement is the analytic continuation (the lifting) of the
distance measurement.  For $\alpha$ in $\mathcal P$ define the {\em
Fenchel-Nielsen  angle} by $\vartheta_{\alpha}=2\pi\tau_{\alpha}/\ell_{\alpha}$.
The Fenchel-Nielsen coordinates for Teichm\"{u}ller space for the
decomposition $\mathcal P$ are
$(\ell_{\alpha_1},\vartheta_{\alpha_1},\dots,\ell_{\alpha_{3g-3+n}},\vartheta_{\alpha_{3g-3+n}})$.
The coordinates provide a real analytic equivalence of $\mathcal T$ to
$(\mathbb{R}_+\times \mathbb{R})^{3g-3+n}$, \cite{Abbook,Busbook,ImTan}.  Each pants decomposition gives rise to a Fenchel-Nielsen coordinate system.  

A partial compactification is introduced by extending the range
of the Fenchel-Nielsen parameters.  The added points correspond to unions of
hyperbolic surfaces with formal pairings of cusps.  The interpretation of {\em length
vanishing} is the key ingredient.  For an $\ell_{\alpha}$ equal to zero, the
angle $\vartheta_{\alpha}$ is not defined and in place of the geodesic for
$\alpha$ there appears a pair of cusps; the reference map $f$ is now a homeomorphism of
$F-\alpha$ to a union  of hyperbolic surfaces (curves parallel to $\alpha$
map to loops encircling the cusps).  The parameter space for a pair
$(\ell_{\alpha},\vartheta_{\alpha})$ will be the identification space $\mathbb{R}_{\ge
0}\times\mathbb{R}/\{(0,y)\sim(0,y')\}$.  More generally for the pants decomposition
$\mathcal P$, a frontier set $\mathcal{F}_{\mathcal P}$ is added to the
Teichm\"{u}ller space by extending the Fenchel-Nielsen parameter ranges: for
each $\alpha\in\mathcal{P}$, extend the range of $\ell_{\alpha}$ to include
the value $0$, with $\vartheta_{\alpha}$ not defined for $\ell_{\alpha}=0$.  The
points of $\mathcal{F}_{\mathcal P}$ parameterize unions of Riemann
surfaces with each $\ell_{\alpha}=0,\alpha\in\mathcal{P},$ specifying a pair
of cusps. The points of $\mathcal F_{\mathcal P}$ are Riemann surfaces with nodes in the sense of Bers.   For a simplex $\sigma\subset\mathcal{P}$, define the
$\sigma$-null stratum, a subset of $ \mathcal F_{\mathcal P}$, as $\mathcal{T}(\sigma)=\{R\mid \ell_{\alpha}(R)=0\mbox{ iff }\alpha\in\sigma\}$. Null strata are given as products of lower dimensional Teichm\"{u}ller spaces.  The frontier set $\mathcal{F}_{\mathcal P}$
is the union of the $\sigma$-null strata for the sub simplices of
$\mathcal{P}$.  Neighborhood bases for points of $\mathcal{F}_{\mathcal P}
\subset\mathcal{T}\cup\mathcal{F}_{\mathcal P}$  are specified by the
condition that for each simplex $\sigma\subset\mathcal P$  the projection
$((\ell_{\beta},\vartheta_{\beta}),\ell_{\alpha}):
\mathcal{T}\cup\mathcal{T}(\sigma)\rightarrow\prod_{\beta\notin\sigma}(\mathbb{R}_+\times\mathbb{R})\times\prod_{\alpha\in\sigma}(\mathbb{R}_{\ge
0})$  is continuous.    For a simplex $\sigma$ contained in pants  decompositions $\mathcal P$ and $\mathcal P'$ the specified neighborhood systems for $\mathcal T \cup\mathcal{T}(\sigma)$ are equivalent.  The {\em augmented Teichm\"{u}ller space} $\Tbar=\mathcal{T}\cup_{\sigma\in
C(F)}\mathcal{T}(\sigma)$ is the resulting stratified topological space,
\cite{Abdegn, Bersdeg}.  $\Tbar$ is not locally compact since points of the
frontier do not have relatively compact neighborhoods; the  neighborhood bases are
unrestricted in the $\vartheta_{\alpha}$ parameters for $\alpha$ a $\sigma$-null.
The action of $Mod$ on $\mathcal T$ extends to an action by homeomorphisms on
$\Tbar$ (the action on $\Tbar$ is not properly discontinuous) and the quotient
$\Tbar/Mod$ is topologically the compactified moduli space of stable curves, \cite[see Math. Rev.56 \#679]{Abdegn}. 

There is an alternate description of the frontier points in terms of representations of groups and the Chabauty topology.  A Riemann surface with punctures and hyperbolic metric is uniformized by a cofinite subgroup $\Gamma\subset PSL(2;\mathbb R)$.  A puncture corresponds to the $\Gamma$-conjugacy class of a maximal parabolic subgroup.  In general a Riemann surface with labeled punctures corresponds to the $PSL(2;\mathbb R)$ conjugacy class of a tuple $(\Gamma,\langle\Gamma_{01}\rangle ,\dots,\langle\Gamma_{0n}\rangle )$ where $\langle\Gamma_{0j}\rangle $ are the maximal parabolic classes and a labeling for punctures is a labeling for conjugacy classes.  A {\em Riemann surface with nodes} $R'$ is a finite collection of $PSL(2;\mathbb R)$ conjugacy classes of tuples $(\Gamma^\ast,\langle\Gamma_{01}^\ast\rangle ,\dots,\langle\Gamma_{0n^\ast}^\ast\rangle )$ with a formal pairing of certain maximal parabolic classes.  The conjugacy class of a tuple is called a {\em part} of $R'$.  The unpaired maximal parabolic classes are the punctures of $R'$ and the genus of $R'$ is defined by the relation $Total\ area=2\pi(2g-2+n)$.  A cofinite $PSL(2;\mathbb R)$ injective representation of the fundamental group of a surface is topologically allowable provided peripheral elements correspond to peripheral elements.  A point of the Teichm\"{u}ller space $\mathcal T$ is given by the $PSL(2;\mathbb R)$ conjugacy class of a topologically allowable injective cofinite representation of the fundamental group $\pi_1(F)\rightarrow\Gamma\subset PSL(2;\mathbb R)$.  For a simplex $\sigma$, a point of $\mathcal T(\sigma)$ is given by a collection $\{(\Gamma^\ast,\langle\Gamma_{01}^\ast\rangle ,\dots,\langle\Gamma_{0n^\ast}^\ast\rangle )\}$ of tuples with: a bijection between $\sigma$ and the paired maximal parabolic classes; a bijection between the components $\{F_j\}$ of $F-\sigma$ and the conjugacy classes of parts $(\Gamma^j,\langle\Gamma_{01}^j\rangle ,\dots,\langle\Gamma_{0n^j}^j\rangle )$ and the $PSL(2;\mathbb R)$ conjugacy classes of topologically allowable isomorphisms $\pi_1(F_j)\rightarrow\Gamma^j$, \cite{Abdegn, Bersdeg}. We are interested in geodesic-lengths for a sequence of points of $\mathcal T$ converging to a point of $\mathcal T(\sigma)$.  The convergence of hyperbolic metrics provides that for closed curves of $F$ disjoint from $\sigma$, geodesic-lengths converge, while closed curves with essential $\sigma$ intersections have geodesic-lengths tending to infinity, \cite{Bersdeg, Wlhyp, Wlcbms}.  

We refer to the Chabauty topology to describe the convergence for the $PSL(2;\mathbb R)$ representations.  Chabauty introduced a topology for the space of discrete subgroups of a locally compact group, \cite{Chb}.   A neighborhood of $\Gamma\subset PSL(2;\mathbb R)$ is specified by a neighborhood $U$ of the identity in $PSL(2;\mathbb R)$ and a compact subset $K\subset PSL(2;\mathbb R)$.  A discrete group $\Gamma'$ is in the neighborhood $\mathcal N(\Gamma,U,K)$ provided $\Gamma'\cap K\subseteq\Gamma U$ and $\Gamma\cap K\subseteq\Gamma'U$.  The sets $\mathcal N(\Gamma,U,K)$ provide a neighborhood basis for the topology. We consider a sequence of points of $\mathcal T$ converging to a point of $\mathcal T(\sigma)$ corresponding to 
$\{(\Gamma^\ast,\langle\Gamma_{01}^\ast\rangle ,\dots,\langle\Gamma_{0n^\ast}^\ast\rangle )\}$.  Important for the present considerations is the following property.  Given a sequence of points of $\mathcal T$ converging to a point of $\mathcal T(\sigma)$ and a component $F_j$ of $F-\sigma$, there exist $PSL(2;\mathbb R)$ conjugations such that restricted to $\pi_1(F_j)$ the corresponding representations converge element wise to $\pi_1(F_j)\rightarrow\Gamma^j$,
 \cite[Theorem 2]{HrCh}.   

We consider the geometry of geodesic-length gradients in a neighborhood of an augmentation point of $\mathcal T(\sigma)\subset\overline{\mathcal T}$ \cite{Wlcbms}.  For the following, we refer to the elements of $\sigma$ as the {\em short geodesics}. In a neighborhood of a point of $\caT(\sigma)$, the WP metric is approximately a product of common metrics for the short geodesic-lengths and lower dimensional WP metrics; in particular
\[
\langle\ ,\ \rangle_{WP}\,=\,2\pi\sum_{\alpha\in\sigma}(d\lla^{1/2})^2+(d\lla^{1/2}\circ J)^2\,+\,\langle\ ,\ \rangle_{WP;\caT(\sigma)}\,+\,O(d_{\caT(\sigma)}^2\langle\ ,\ \rangle_{WP})
\]
for $J$ the complex structure of $\caT$ and $\langle\ ,\ \rangle_{WP;\caT(\sigma)}$ the lower dimensional metric of the stratum.   The strata of $\Tbar$ are geodesically convex and the distance to a stratum $\caT(\sigma)$ is given as
\[
d_{\overline{\caT(\sigma)}}\,=\,(2\pi\sum_{\alpha\in\sigma}\lla)^{1/2}\,+\,O(\sum_{\alpha\in\sigma}\lla^{5/2}).
\]
In \cite{Wlbhv} we studied the behavior of the covariant derivatives of the geodesic root-length gradients $D_U\lambda\ssa$ and showed that at first-order the metric continues to behave as an approximate product \cite[Theorem 4.6]{Wlext}.  In particular for short geodesics $\alpha\ne\alpha'$, and $\beta$ a disjoint geodesic, $D_{\lambda\ssa}\lambda_{\alpha'}$ and  $D_{\lambda\ssb}\lambda\ssa$ vanish on $\caT(\sigma)$.  Also for $\beta,\beta'$ disjoint, non short geodesics, $D_{\lambda\ssb}\lambda_{\beta'}$ converges to its value on the limiting surface.  The formulas involving the complex structure $J$ follow by observing that the complex structure is parallel with respect to $D$, since the metric is K\"{a}hler.  

We now examine the metric second-order behavior by considering the curvature tensor.   We recall the convention that on $\caT(\sigma)$ the pairings $\langle\langle\lambda\ssa,\lambda\ssb\rangle\rangle$ vanish for $\alpha$ in $\sigma$ and $\beta$ disjoint and the pairings $\langle\langle\lambda\ssb,\lambda_{\beta'}\rangle\rangle$ vanish for geodesics on distinct components (parts) of a limiting noded Riemann surface.  We first recall a basic result \cite{Wlbhv}.  

\begin{lemma}\label{inprcts}
For a pants decomposition $\caP$ with subset of short geodesics $\sigma$, the pairing of geodesic root-length gradients is continuous in a neighborhood of a point of $\caT(\sigma)\subset\Tbar$.  The matrix $P$ of pairings for $\{\lambda\ssg\}_{\gamma\in\caP}$ determines a germ of a Lipschitz map from $\Tbar$ into a complex linear group $GL(\mathbb C)$.
\end{lemma}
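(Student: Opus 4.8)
The plan is to establish the two assertions in turn: first the entrywise continuity of the pairing matrix $P_{\gamma\delta}=\langle\langle\lambda_\gamma,\lambda_\delta\rangle\rangle$, $\gamma,\delta\in\caP$, and then a uniform Lipschitz bound that yields the germ of a map into $GL(\mathbb C)$. Since any two elements of $\caP$ are disjoint or coincide, expansion (\ref{wpherm}) applies to every entry, and I would organize the argument by whether the indices lie in the short set $\sigma$ or in $\caP-\sigma$.

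The entries meeting a short geodesic are immediate from (\ref{wpherm}). If $\alpha\in\sigma$ then $\lla\to 0$ on approach to $\caT(\sigma)$, so the diagonal entry is $P_{\alpha\alpha}=\frac{1}{4\pi}+O(\lla^{3})\to\frac{1}{4\pi}$, while for any distinct $\delta\in\caP$ the entry is $P_{\alpha\delta}=O(\lla^{3/2}\lld^{3/2})\to 0$. Thus the rows and columns of $P$ indexed by $\sigma$ are continuous, the $\sigma\times\sigma$ block converging to $\frac{1}{4\pi}I_{|\sigma|}$ and the short--nonshort entries to $0$, in accord with the stated convention that $\langle\langle\lambda\ssa,\lambda\ssb\rangle\rangle$ vanishes on $\caT(\sigma)$ for $\alpha\in\sigma$ and $\beta$ disjoint.

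The substance is the block indexed by non-short $\beta,\beta'\in\caP-\sigma$, whose lengths stay bounded below so that (\ref{wpherm}) gives no smallness. Here I would return to the Riera formula of Theorem \ref{gradpair}, expressing $\langle\grad\llb,\grad\ell_{\beta'}\rangle$ as a positive sum over $\Gamma_\beta\backslash\Gamma/\Gamma_{\beta'}$ of square inverse exponential-distances between lifts. A non-short geodesic is disjoint from $\sigma$, hence lies on a single component $F_j$ of $F-\sigma$, so two cases arise. If $\beta,\beta'$ lie on distinct parts, every connecting segment must cross a pinching collar, the relevant distances tend to infinity, and the sum tends to $0$, matching the convention that pairings across distinct parts vanish on $\caT(\sigma)$. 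If $\beta,\beta'$ lie on a common part, I would use the element-wise Chabauty convergence of the restricted representations \cite[Theorem 2]{HrCh} to conclude that each double-coset distance converges to its value for the limiting group $\Gamma^j$, and then pass to the limit term-by-term; the limit is exactly the WP pairing of the corresponding root-length gradients in the lower-dimensional space $\caT(F_j)$. \emph{This is the main obstacle}: legitimizing the term-by-term passage requires a tail bound for the double-coset sum that is uniform along the degenerating family, which I would supply from the distant sum estimate underlying Theorem \ref{gradpair}.

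Assembling the cases, $P$ is continuous with a block-diagonal limit: the block $\frac{1}{4\pi}I_{|\sigma|}$ for $\sigma$ together with one WP-pairing block for each part in $\operatorname{parts}^{\sharp}(\sigma)$. Each part block is the Gram matrix of the root-length frame $\{\lambda_\gamma\}$ of a pants decomposition of the corresponding part $F_j$, hence invertible, since those gradients frame $T^{1,0}\caT(F_j)$ by \cite[Theorem 3.7]{WlFN} and the WP metric is non-degenerate; the short block is patently invertible. So the limit lies in $GL(\mathbb C)$, and since $GL(\mathbb C)$ is open and $P$ is continuous, $P$ carries a neighborhood of the stratum point into $GL(\mathbb C)$. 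For the Lipschitz claim I would bound the WP-gradient of each entry: differentiating the parallel Hermitian pairing gives $dP_{\gamma\delta}(U)=\langle\langle D_U\lambda_\gamma,\lambda_\delta\rangle\rangle+\langle\langle\lambda_\gamma,D_U\lambda_\delta\rangle\rangle$, whence Cauchy--Schwarz, the uniform bounds on the first covariant derivatives of the frame from \cite[Theorem 4.6]{Wlext}, and the norm comparability of Corollary \ref{pantsbd} yield $|dP_{\gamma\delta}(U)|\le C\|U\|_{WP}$ uniformly near $\caT(\sigma)$. A function of uniformly bounded WP-gradient on $\caT$ extends to a Lipschitz function on the completion $\Tbar$, giving the asserted germ of a Lipschitz map into $GL(\mathbb C)$.
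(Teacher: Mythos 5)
The paper does not actually prove this lemma: it is recalled verbatim from \cite{Wlbhv} ("We first recall a basic result"), so your proposal can only be measured against the method of that reference and against the closely parallel argument the paper does give for Theorem \ref{curvcont}. Your continuity half is sound and is essentially that method: the entries meeting $\sigma$ are handled by expansion (\ref{wpherm}), and the non-short block by the Riera double-coset sum, Chabauty convergence of the restricted representations \cite{HrCh}, and a distant-sum tail estimate to legitimize term-by-term passage --- exactly the scheme the paper uses for the curvature integrals, and you correctly flag the tail bound as the point requiring care. The block-diagonal limit, its invertibility via the frame property \cite{WlFN}, and the openness of $GL(\mathbb C)$ are also fine.

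The genuine gap is in the Lipschitz step. You invoke ``uniform bounds on the first covariant derivatives of the frame from \cite[Theorem 4.6]{Wlext}'' and then apply Cauchy--Schwarz. No such uniform bounds exist: for $\alpha\in\sigma$ the covariant derivative of the short root-length gradient blows up, $D_U\lambda\ssa\,=\,3\lla^{-1/2}\langle U,J\lambda\ssa\rangle J\lambda\ssa\,+\,O(\lla^{3/2}\|U\|)$, so $\|D_U\lambda\ssa\|$ is of order $\lla^{-1/2}$ approaching $\caT(\sigma)$; already for the model metric $2\pi\big((d\lla^{1/2})^2+(d\lla^{1/2}\circ J)^2\big)$ the Hessian of $\lla^{1/2}$ equals $3\lla^{-1/2}(d\lla^{1/2}\circ J)^2$. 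This is why the paper's own summary of \cite[Theorem 4.6]{Wlext}, in the paragraph preceding the lemma, records only the mixed evaluations ($D_{\lambda\ssa}\lambda_{\alpha'}$, $D_{\lambda\ssb}\lambda\ssa$ vanishing, $D_{\lambda\ssb}\lambda_{\beta'}$ converging) and is silent about the same-index derivative. Cauchy--Schwarz with the true bounds only yields $|dP_{\alpha\delta}(U)|=O(\lla^{-1/2}\|U\|)$, which is useless near the stratum. The conclusion is nevertheless true, and your outline can be repaired by using the structure of the expansion rather than a norm bound: the unbounded term of $D_U\lambda\ssa$ points in the direction $J\lambda\ssa$, which corresponds to $i\lambda\ssa$ among holomorphic tangents; its Hermitian pairing against $\lambda\ssa$ is purely imaginary, hence cancels in $dP_{\alpha\alpha}(U)=2\Re\langle\langle D_U\lambda\ssa,\lambda\ssa\rangle\rangle$, and its pairing against $\lambda\ssd$, $\delta\ne\alpha$, is $i\langle\langle\lambda\ssa,\lambda\ssd\rangle\rangle=O(\lla^{3/2}\lld^{3/2})$ by (\ref{wpherm}), which absorbs the factor $\lla^{-1/2}$. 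With this cancellation one gets $|dP_{\gamma\delta}(U)|\le C\|U\|_{WP}$ uniformly near $\caT(\sigma)$, and your final step --- bounded WP-gradient implies Lipschitz on $\caT$, hence a Lipschitz extension to the completion $\Tbar$ --- then goes through.
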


We now consider the limiting values for the curvature tensor evaluated on the gradients for the pants decomposition $\caP$.  We introduce the convention that on $\caT(\sigma)$ the following evaluations $R(\lambda\ssa,\lambda\ssb,\lambda\ssd,\lambda\ssd)$ vanish: for $\alpha\in\sigma$ and at least one of $\beta,\gamma$ and $\delta$ distinct from $\alpha$; for $\alpha,\beta,\gamma,\delta\in\caP-\sigma$ and not all geodesics on the same component of a Riemann surface with nodes represented in $\caT(\sigma)$.  

\begin{theorem}\label{curvcont}
For a pants decomposition $\caP$ with subset of short geodesics $\sigma$, the diagonal curvature evaluations for $\alpha\in\sigma$ satisfy $R(\lambda\ssa,\lambda\ssa,\lambda\ssa,\lambda\ssa)=3(16\pi^3\lla)^{-1}\,+\,O(\lla)$ and all remaining curvature evaluations are continuous in a neighborhood of $\caT(\sigma)\subset\Tbar$.
\end{theorem}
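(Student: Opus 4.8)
The plan is to reduce every evaluation to the integrals $I_{\alpha\beta\gamma\delta}$ of Theorem \ref{Iexp} and then separate the singular diagonal term from the terms that extend continuously. Writing $\lambda\ssa=\lla^{-1/2}\mua/2$ and applying formula (\ref{WPcurv}), each evaluation is
\[
R(\lambda\ssa,\lambda\ssb,\lambda\ssg,\lambda\ssd)\,=\,\frac{1}{16(\lla\llb\llg\lld)^{1/2}}\Big(\tfrac12 I_{\alpha\beta\gamma\delta}\,+\,\tfrac12 I_{\alpha\delta\gamma\beta}\Big).
\]
For $\alpha\in\sigma$ and all four indices equal, the expansion $I_{\alpha\alpha\alpha\alpha}=3\pi^{-3}\lla+O(\lla^3)$ gives directly $R(\lambda\ssa,\lambda\ssa,\lambda\ssa,\lambda\ssa)=3(16\pi^3\lla)^{-1}+O(\lla)$, the singular term recorded in the statement. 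It remains to treat the other index patterns, which I divide into those involving a short geodesic with the four indices not all equal, and those with all indices in $\caP-\sigma$.

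For the mixed patterns I use only the bounds of Theorem \ref{Iexp} together with the gradient pairing bounds of Theorem \ref{gradpair}, checking that division by $(\lla\llb\llg\lld)^{1/2}$ leaves a positive power of each vanishing length. For at most pairs coinciding, $I_{\alpha\beta\gamma\delta}=O(\lla\llb\llg\lld)$ gives $R=O((\lla\llb\llg\lld)^{1/2})$, which tends to zero once some index is short. For three indices coinciding, $I_{\alpha\alpha\alpha\beta}=\frac{3}{2\pi^2}\langle\grad\lla,\grad\llb\rangle+O(\lla^4\llb^2)$ with $\langle\grad\lla,\grad\llb\rangle=O(\lla^2\llb^2)$ for $\alpha\ne\beta$ disjoint, so after normalization $R(\lambda\ssa,\lambda\ssa,\lambda\ssa,\lambda\ssb)=O(\lla^{1/2}\llb^{3/2})$; for the two-pair patterns $\alpha\alpha\beta\beta$ and $\alpha\beta\alpha\beta$ the normalized evaluation is $O(\lla\llb)$. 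Each exponent is positive, so whichever index is short the limit is zero, matching the stated convention.

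The substantive case is all indices in $\caP-\sigma$. Here the lengths remain bounded below near $\caT(\sigma)$, so the normalization is continuous and bounded, and it suffices to prove that $I_{\alpha\beta\gamma\delta}$ extends continuously, converging to the corresponding integral on the limiting parts: to the curvature integral of the lower-dimensional stratum when $\alpha,\beta,\gamma,\delta$ lie on a common part, and to zero otherwise. I would argue this from geometric convergence. Chabauty convergence of the uniformization groups, in the element-wise form \cite[Theorem 2]{HrCh}, together with convergence of the hyperbolic metrics on the parts, gives that the differentials $\mu_\bullet$ for non-short geodesics converge on compact subsets of each part to the harmonic Beltrami differentials of the limiting part, with uniformly negligible mass on the pinching region by Proposition \ref{thetprop} and the Quantitative Collar and Cusp Lemma (and Lemma \ref{cuspbd} near the forming cusps). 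The one remaining point is the behavior of the nonlocal operator $\Delta=-2(D-2)^{-1}$: I must show that a source concentrated on one part produces a $\Delta$-response that converges to the response of the limiting part and leaks negligibly onto the other parts.

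The hard part is exactly this decoupling of $\Delta$ across the degenerating nodes. The tool is the distant sum estimate of Proposition \ref{grest}, $G(z,z_0)\le C\inj(z_0)^{-1}e^{-d_R(z,z_0)}$, combined with the fact that the collar width $w(\eta)=\log(4/\llet)+O(\llet^2)$ of each short $\eta\in\sigma$ tends to infinity, so the distance across a pinching collar diverges and the Green's function transmission between parts decays exponentially. This localizes $\Delta$ to each part in the limit, so that $\int_R\mua\overline\mub\,\Delta\mug\overline\mud\,dA$ splits, up to vanishing error, into within-part integrals: when the four geodesics share a part the limit is that part's curvature integral, and when they do not the factor $\mua\overline\mub$ and the localized $\Delta\mug\overline\mud$ have asymptotically disjoint support and the integral tends to zero. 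This reproduces the asserted continuity, and is the same collar-localization mechanism already exploited for $\Delta\mu\overline\mu$ in Theorem \ref{delmuab}, now applied to the full bilinear pairing and tracked across the nodes.
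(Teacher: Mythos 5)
Your proposal is correct and follows essentially the same route as the paper: the evaluations involving short geodesics are reduced to the expansions and bounds of Theorem \ref{Iexp} (with the normalization arithmetic you carry out explicitly), and the evaluations for geodesics in $\caP-\sigma$ are shown to be continuous via Chabauty convergence of the uniformizing representations, the bounds of Proposition \ref{thetprop}, and the Green's function decay of Proposition \ref{grest} across the widening collars. The paper phrases this last step as pointwise convergence of the integrands $\mu\ssb$ and $\Delta\mu\ssg\overline{\mu\ssd}$ on the principal part together with $O(\max_{\alpha\in\sigma}\lla)$ smallness of the gradients and of $G(z,z_0)$ on and across non-principal components, which is exactly your localization/decoupling of $\Delta$ across the nodes.
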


\begin{proof}
For a pants decomposition the curvature evaluations involving short geodesics are treated in Theorem \ref{Iexp}.  It only remains to consider continuity for evaluations for geodesics in $\caP-\sigma$.  We choose a neighborhood $U$ in $\Tbar$ of $p\in\caT(\sigma)$ on which the geodesic-lengths $\llb$, $\beta\in\caP-\sigma$, are bounded away from zero.  The Chabauty topology provides that the geodesic-lengths $\llb$ are continuous on $U$.  We consider the curvature tensor evaluated on the geodesic-length gradients.   The evaluation is given by the integral pairing (\ref{WPcurv}).  Proposition \ref{thetprop} and Theorem \ref{delprop} combine to provide that for $\beta,\gamma,\delta\in\caP-\sigma$, the geodesic-length gradients $\mu\ssb$ and products $\Delta\mu\ssg\overline{\mu\ssd}$ are uniformly bounded on the neighborhood $U$.  Continuity of the integrals (\ref{WPcurv}) will follow from pointwise convergence of the integrands.  For $p$ corresponding to a noded Riemann surface with a part $R^{\natural}$ containing geodesics $\beta,\gamma$ and $\delta$, we establish pointwise convergence of $\mu\ssb$ and $\Delta\mu\ssg\overline{\mu\ssd}$ on $R^{\natural}$ and convergence to zero for any  component of the complement.

We establish pointwise convergence on $R^{\natural}$.   Points in $\caT$ near $p$ describe Riemann surfaces with $PSL(2;\mathbb R)$ representations of designated thick components close, modulo $PSL(2;\mathbb R)$ conjugation, to a uniformization of $R^{\natural}$.  Conjugate the representations to be close to the uniformization.   We will decompose the sums for $\mu\ssb$ and $\Delta\mu\ssg\overline{\mu\ssd}$ into terms for distance at most $\delta_0$ from a compact set in $\mathbb H$ and terms for greater distance.  The sum of latter terms will be uniformly bounded by applying the distant sum estimate as follows.   For a Chabauty neighborhood, the designated thick components are covered by a fixed compact set in $\mathbb H$.  The geodesics $\beta,\gamma$ and $\delta$ have lifts to axes intersecting the fixed compact set.   By the Quantitative Collar and Cusp Lemma and the method of Proposition \ref{thetprop} for the series $\overline{\Theta_*}(ds^2)^{-1}$, the absolute sum of terms at distance at least $\delta_0$ from the compact set is bounded as $O(e^{-\delta_0})$.  For Chabauty convergence, the bounded number of remaining terms of the series converge as the representations converge.  In summary the series $\Theta_*$ converge pointwise.   The operator $\Delta$ is given by the sum $\sum_{A\in\Gamma}-2Q_2(d(z,Az_0))$.   The individual terms of the sum converge as the representations converge and the sum is overall bounded in $L^1$.  It follows that the products $\Delta\mu\ssg\overline{\mu\ssd}$ converge pointwise.

In the case that the limiting Riemann surface with nodes has components distinct from $R^{\natural}$, then the union of short geodesics $\cup_{\alpha\in\sigma}\alpha$ separates the limiting surfaces into multiple components.  Principal components converge to $R^{\natural}$.  A point in a non principal component is at distance at least the half width $\log 2/\lla$ of the collars $c(\alpha),\,\alpha\in\sigma$, from the principal component.   For a non principal component conjugate the representations to converge.  
By Proposition \ref{thetprop} and the observation about distance, on a non principal component the gradients $\mu\ssb, \mu\ssg$ and $\mu\ssd$ have magnitude $O(\max_{\alpha\in\sigma}\lla)$.  Similarly by Proposition \ref{grest}, for points $z$ in the principal component and $z_0$ in a non principal component, $G(z,z_0)$ also has magnitude $O(\max_{\alpha\in\sigma}\lla)$.  The estimates for $\mu\ssg,\mu\ssd$, combine with the overall $L^1$ bound for the Green's function and the given bound for the Green's function to provide that the products  $\Delta\mu\ssg\overline{\mu\ssd}$ converge to zero on non principal components.   

\end{proof}

The formulas for the metric, covariant derivative and curvature tensor display an asymptotic product structure for an extension of the tangent bundle over $\caT(\sigma)$ 
\[
\prod_{\alpha\in\sigma}\,\spn\{\lambda\ssa\}\ \times\prod_{R^{\natural}\in\,
\operatorname{parts}^{\sharp}(\sigma)}T^{1,0}\caT(R^{\natural}),
\]
where $\operatorname{parts}^{\sharp}(\sigma)$ is the set of components that are not thrice-punctured spheres for the noded Riemann surfaces represented in $\caT(\sigma)$.  The asymptotic product structure for the metric first appeared in \cite{Msext} and appears in formula (\ref{wpherm}); for the covariant derivative the structure is detailed in \cite[Theorem 4.6]{Wlext} and Theorem \ref{curvcont} combined with the vanishing conventions displays the structure for the curvature tensor.   Evaluations involving more than a single factor tend to zero and evaluations for a single factor tend to evaluations for either the standard metric for opening a node or a lower dimensional Teichm\"{u}ller space.   The structure is formal since $\Tbar$ is not a complex manifold and the corresponding extension of the vector bundle of holomorphic quadratic differentials over $\Mbar$ is not the cotangent bundle, but the logarithmic polar cotangent bundle \cite{HMbook}. Nevertheless the product structure applies for limits of the metric and curvature tensor along $\caT(\sigma)$.  The individual product factors have strictly negative sectional curvature.   
The $\lambda\ssa$-section is holomorphic with curvature bounded above by Theorem \ref{wppastbd},  or equivalently bounded above by Corollary \ref{wpseccurv}.   The general result \cite{Trcurv,Wlchern} establishes negative curvature for the Teichm\"{u}ller spaces $\caT(R^{\natural})$.  Recall that for a product of negatively curved manifolds, a zero curvature tangent section has at most one $\mathbb R$-dimensional projection into the tangent space of each factor.  Accordingly, the maximal dimension of a flat tangent multi section equals the number of product factors. We establish the counterpart for WP.  We continue using pants decomposition gradients  as a $\mathbb C$-frame.  By considering $\mathbb C$-sums of the indeterminates $\lambda\ssg$, $\gamma\in\caP$, a germ $\mathcal V$ is defined for an extension over $\Tbar$ of the tangent bundle of $\caT$.  A formal product structure for $\mathcal V$ is defined 
\begin{equation}\label{formprod}
\prod_{\alpha\in\sigma}\,\spn\{\lambda\ssa\}\ \times\prod_{F_j\in\,\operatorname{parts}^{\sharp}(\sigma)}\spn\{\lambda\ssb\}_{\beta\in\caP,\,\beta\scriptsize{\mbox{ on }}F_j}
\end{equation}    
by considering the short geodesics $\sigma$ and the components $\{F_j\}$ of the complement of 
$\sigma$ in the base surface $F$; $\operatorname{parts}^{\sharp}(\sigma)$ is now the set of components not homeomorphic to a sphere minus three disjoint discs.  
\begin{corollary}\textup{Classification of asymptotic flats.}\label{flats}  Let $\mathcal S$ be a $\mathbb R$-subspace of the fiber of $\mathcal V$ over a point of $\caT(\sigma)$.  The subspace $\mathcal S$  is a limit of a sequence of tangent multisections over points of $\caT$ with all sectional curvatures tending to zero if and only if the projections of $\mathcal S$ onto the factors of  (\ref{formprod}) are at most one $\mathbb R$-dimensional.  The maximal dimension for $\mathcal S$ is $|\sigma|+|\operatorname{parts}^{\sharp}(\sigma)|\le\dim_{\mathbb C}\caT$.
\end{corollary}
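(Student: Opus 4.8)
The plan is to reduce the statement to the asymptotic product structure of Theorem~\ref{curvcont} and to the two independent sources of negative curvature among the factors of (\ref{formprod}): the diagonal blow-up $R(\lambda\ssa,\lambda\ssa,\lambda\ssa,\lambda\ssa)=3(16\pi^3\lla)^{-1}+O(\lla)$ for the short-geodesic factors $\spn\{\lambda\ssa\}$, $\alpha\in\sigma$, and the strict negativity of the Weil-Petersson sectional curvature on each lower dimensional Teichm\"{u}ller space $\caT(F_j)$, $F_j\in\operatorname{parts}^{\sharp}(\sigma)$, from \cite{Trcurv,Wlchern}. Throughout I would work in the pants frame $\{\lambda\ssg\}_{\gamma\in\caP}$, writing a tangent vector as $u=\sum_{\gamma}c^{\gamma}(u)\lambda\ssg$ and realizing a subspace at nearby points of $\caT$ by freezing the frame coefficients; the germ $\mathcal V$ and Lemma~\ref{inprcts} make this well defined and guarantee that the Hermitian pairings vary continuously. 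The argument is then an explicit construction for sufficiency and a contrapositive for necessity.

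The computational heart is that the coefficient of the one divergent evaluation in the sectional curvature numerator (\ref{seccurv}) is metric independent and vanishes exactly under the projection hypothesis. Expanding $\langle R(u,v)v,u\rangle$ multilinearly in the frame and collecting, for each $\alpha\in\sigma$, the pure term $R(\lambda\ssa,\lambda\ssa,\lambda\ssa,\lambda\ssa)$ (the only evaluation whose factor is a single complex line, hence the only one that diverges by Theorem~\ref{curvcont}), the K\"{a}hler block symmetries leave the coefficient
\[
2\,|c^{\alpha}(u)|^2|c^{\alpha}(v)|^2-2\,\Re\big((c^{\alpha}(u)\overline{c^{\alpha}(v)})^2\big)\,=\,4\,\big|\Im\big(c^{\alpha}(u)\overline{c^{\alpha}(v)}\big)\big|^2.
\]
This is a fixed quadratic form in the frozen coefficients, so it takes the same value at every point of the realizing sequence, and it is zero precisely when $c^{\alpha}(u),c^{\alpha}(v)$ are $\mathbb R$-proportional, i.e. when the projection of $\operatorname{span}\{u,v\}$ onto $\spn\{\lambda\ssa\}$ is at most one $\mathbb R$-dimensional. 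Thus the potential $0\cdot\infty$ ambiguity never arises: when the $\alpha$-projection is degenerate the divergent term is simply absent.

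For sufficiency I would assume all projections of $\mathcal S$ onto the factors of (\ref{formprod}) are at most one $\mathbb R$-dimensional and realize $\mathcal S$ along a sequence $R_m\to p\in\caT(\sigma)$ by frozen coefficients. For any plane $\operatorname{span}\{u,v\}\subset\mathcal S$ the short-geodesic diagonal terms carry the coefficient above, which vanishes identically, while by Theorem~\ref{curvcont} every remaining evaluation converges to its value on $\caT(\sigma)$. By the product structure and the vanishing conventions that limit equals the block sum $\sum_{j}\langle R_j(u_j,v_j)v_j,u_j\rangle$ over the stratum factors, and each summand is $K_j$ times the area of the projected parallelogram in $\caT(F_j)$, which is zero because that projection too is at most one dimensional. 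The numerator is a fixed polynomial in the finitely many convergent evaluations, so it tends to zero uniformly over the (coefficient-fixed, hence compact) Grassmannian of planes of $\mathcal S$, while the denominator stays bounded below by Corollary~\ref{pantsbd}; hence all sectional curvatures tend to zero.

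For necessity I would argue by contrapositive: if the projection onto some factor is at least two $\mathbb R$-dimensional, choose $u,v\in\mathcal S$ with $\mathbb R$-independent projections onto that factor and pull them back along the given curvature-flattening sequence $\mathcal S_m\to\mathcal S$. If the factor is a short-geodesic line $\spn\{\lambda\ssa\}$, then $|\Im(c^{\alpha}(u_m)\overline{c^{\alpha}(v_m)})|$ is bounded below while $R(\lambda\ssa,\lambda\ssa,\lambda\ssa,\lambda\ssa)=3(16\pi^3\lla)^{-1}\to\infty$ as $\lla(R_m)\to 0$, so the curvature of $\operatorname{span}\{u_m,v_m\}$ diverges in magnitude (every other contribution being bounded); if the factor is a $\caT(F_j)$, then by Theorem~\ref{curvcont} the curvature converges to the block sum at $p$, which, since all factors are nonpositively curved and the $\caT(F_j)$-term is $K_j$ on a nondegenerate plane of the strictly negatively curved $\caT(F_j)$, is a definite negative number. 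Either way the sectional curvatures cannot tend to zero, a contradiction. Finally, $\mathcal S$ injects into the product of its factor projections, so $\dim_{\mathbb R}\mathcal S$ is at most the number $|\sigma|+|\operatorname{parts}^{\sharp}(\sigma)|$ of factors; and since each part not a thrice-punctured sphere contributes $\dim_{\mathbb C}\caT(F_j)\ge 1$ with $\sum_j\dim_{\mathbb C}\caT(F_j)=\dim_{\mathbb C}\caT-|\sigma|$, this number is at most $\dim_{\mathbb C}\caT$. The main obstacle is the second paragraph: isolating the divergent evaluation and proving its coefficient is the metric-independent form $4|\Im(c^{\alpha}(u)\overline{c^{\alpha}(v)})|^2$, so that freezing coefficients makes the blow-up term vanish exactly rather than in an uncontrolled limit.
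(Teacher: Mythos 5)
Correct, and essentially the paper's own argument: you work in the pants frame with frozen coefficients (the paper's ``a subspace of a fiber determines a sub bundle''), use the continuity and vanishing of evaluations from Theorem~\ref{curvcont}, decompose the numerator of (\ref{seccurv}) into per-factor blocks plus a vanishing cross-factor remainder, and invoke strict negativity of the factor curvatures for necessity, with the same dimension count. Your one addition --- the explicit coefficient $4\bigl(\Im\bigl(c^{\alpha}(u)\overline{c^{\alpha}(v)}\bigr)\bigr)^2$ of the divergent diagonal evaluation, showing the blow-up term is exactly absent under the projection hypothesis --- is precisely what the paper leaves implicit, and is a worthwhile sharpening (note only that with the paper's conventions this coefficient enters the numerator as $-4\bigl(\Im\bigl(c^{\alpha}(u)\overline{c^{\alpha}(v)}\bigr)\bigr)^2$, so the divergent contribution is negative; the argument is unaffected).
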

\begin{proof}
We use the $\mathbb C$-frame $\lambda\ssg,\gamma\in\caP$, and the definition of $\mathcal V$.  Sectional curvature is given by formula (\ref{seccurv}).  Evaluation of the denominator of (\ref{seccurv}) is continuous and non zero by Lemma \ref{inprcts}.  As a consequence of the continuity of evaluations and vanishing of evaluations, approaching $\caT(\sigma)$, the contribution to the numerator of (\ref{seccurv}) tends to zero for the evaluations involving more than a single factor of the product (\ref{formprod}).  In particular, the numerator is given as separate sums for the factors of the product and a remainder which vanishes on $\caT(\sigma)$.   Since the individual factors have strictly negative sectional curvatures, it follows for a limit of sections with sectional curvatures tending to zero, that the curvature contribution for each factor tends to zero. It follows that the projection of a limit to each factor is one $\mathbb R$-dimensional.  Consider then the converse.  Since a frame is given, a subspace of a fiber determines a sub bundle.  A consequence of the continuity of evaluations and at most one  $\mathbb R$-dimensional projections is that the sub bundle sectional curvatures tend to zero approaching the limit fiber.  The dimension conclusion is immediate.   
\end{proof}

Understanding approximately flat subspaces is an important consideration for global geometry.   The rank in the sense of Gromov is the maximal dimension of a quasi-flat subspace, a quasi-isometric embedding of a Euclidean space. Brock-Farb find that the rank is one if $\dim_{\mathbb C}\caT\le 2$ and in general is at least the maximum $\mathbf m=\lfloor (1+\dim_{\mathbb C}\caT)/2\rfloor$ of $|\operatorname{parts}^{\sharp}(\sigma)|$ taken over simplices in the curve complex \cite{BF}.  Brock-Farb \cite{BF}, Behrstock \cite{Bhr} and Aramayona \cite{Arm} apply considerations to show that Teichm\"{u}ller space is Gromov hyperbolic in the rank one case.   Behrstock-Minsky find in their work on the asymptotic cone of the mapping class group that the rank is exactly $\mathbf m$ \cite{BMi}.  In \cite[Section 6]{Wlbhv} we find that a locally Euclidean subspace of $\Tbar$ has dimension at most the maximum $\mathbf m$.   The above corollary provides additional information on asymptotically flat subspaces and rank.  We apply compactness and the above corollary to show that beyond asymptotic flats there is a negative upper bound for sectional curvature.

\begin{corollary}\label{beyflat}
There exists a negative constant $c_{g,n}$ such that a subspace $\caS$ of a tangent space of $\caT$ with $\dim_{\mathbb R}\caS > \dim_{\mathbb C}\caT$ contains a section with sectional curvature at most $c_{g,n}$.  
\end{corollary}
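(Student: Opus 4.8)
The plan is to argue by contradiction using compactness, reducing the claim to the asymptotic flats classification of Corollary \ref{flats}. Fix the topological type $(g,n)$ and set $N=\dim_{\mathbb C}\caT$. For a subspace $\caS$ of a tangent space, write $m(\caS)=\min_{P\subset\caS}K(P)$ for the minimal sectional curvature over two-dimensional sections $P$ of $\caS$; the minimum is attained since $K$ is continuous on the compact Grassmannian of sections, and since $\dim_{\mathbb R}\caS>N\ge 1$ there are sections. By Theorem \ref{wppastbd} sectional curvature is negative, so $m(\caS)<0$ for every such $\caS$. It suffices to show that
\[
\kappa^{\ast}\,=\,\sup\{\,m(\caS)\mid \caS\subset T_R\caT,\ R\in\caT,\ \dim_{\mathbb R}\caS> N\,\}
\]
is strictly negative; one then takes $c_{g,n}=\kappa^{\ast}$, since $m(\caS)\le\kappa^{\ast}$ already exhibits the required section with curvature at most $c_{g,n}$. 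As $\kappa^{\ast}\le 0$ in any case, the only alternative to be ruled out is $\kappa^{\ast}=0$.

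Assume $\kappa^{\ast}=0$. Then there is a sequence of subspaces $\caS_k\subset T_{R_k}\caT$ with $\dim_{\mathbb R}\caS_k>N$ and $m(\caS_k)\to 0$. Since only the finitely many integer values between $N+1$ and $2N$ occur, I pass to a subsequence along which $\dim_{\mathbb R}\caS_k\equiv d$ is constant. Because every sectional curvature of $\caS_k$ lies in $(m(\caS_k),0)$, all sectional curvatures of $\caS_k$ tend to zero. Sectional curvature is $Mod$-invariant, so replacing each $R_k$ by a mapping-class translate and $\caS_k$ by its isometric image leaves all curvatures unchanged; using the compactness of the moduli space $\Mbar=\Tbar/Mod$ together with the finiteness of pants decompositions modulo $Mod$, I pass to a further subsequence so that $R_k\to p\in\caT(\sigma)$ for a stratum $\caT(\sigma)$, with a fixed pants decomposition $\caP\supseteq\sigma$ whose short geodesics are exactly $\sigma$.

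With the global $\mathbb C$-frame $\{\lambda\ssg\}_{\gamma\in\caP}$, each $\caS_k$ is recorded by a real $d$-plane of frame coordinates in $\mathbb C^{N}$. By compactness of the real Grassmannian these coordinate planes subconverge, defining a subspace $\caS_{\infty}$ of the fiber of $\mathcal V$ over $p$. The point that the limit retains the full dimension $d$ is supplied by Lemma \ref{inprcts}: the pairing matrix of $\{\lambda\ssg\}_{\gamma\in\caP}$ extends to a germ of a map into $GL(\mathbb C)$, so the frame stays non-degenerate up to and including $\caT(\sigma)$ and no dimension is lost in the limit. By construction $\caS_{\infty}$ is a limit of the multisections $\caS_k$ over points of $\caT$ with all sectional curvatures tending to zero, so it satisfies the hypothesis of the forward direction of Corollary \ref{flats}. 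That corollary bounds $\dim_{\mathbb R}\caS_{\infty}\le|\sigma|+|\operatorname{parts}^{\sharp}(\sigma)|\le N$, contradicting $\dim_{\mathbb R}\caS_{\infty}=d>N$. Hence $\kappa^{\ast}<0$ and the corollary follows, with $c_{g,n}$ depending only on $(g,n)$ since the compactness is carried out in the fixed moduli space $\Mbar$.

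The main obstacle is the passage to the limit subspace $\caS_{\infty}$ and the verification that Corollary \ref{flats} applies to it. Two features require care: first, the WP metric degenerates approaching $\caT(\sigma)$, so Lemma \ref{inprcts} is needed to guarantee that $\{\lambda\ssg\}$ remains a frame and the limiting $d$-plane is genuinely $d$-dimensional inside $\mathcal V|_p$; second, the diagonal curvatures $R(\lambda\ssa,\lambda\ssa,\lambda\ssa,\lambda\ssa)$ for $\alpha\in\sigma$ blow up like $(16\pi^3\lla)^{-1}$ by Theorem \ref{curvcont}, so the curvature tensor does not converge term by term. Both difficulties are already absorbed into the statement of Corollary \ref{flats}, whose classification accounts for the singular diagonal direction; the present argument needs only the accompanying dimension count that an asymptotically flat subspace cannot exceed the number $|\sigma|+|\operatorname{parts}^{\sharp}(\sigma)|\le N$ of product factors.
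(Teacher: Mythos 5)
Your proof is correct and takes essentially the same approach as the paper's: both argue via the supremum of minimal sectional curvatures over subspaces of dimension exceeding $\dim_{\mathbb C}\caT$, use $Mod$-invariance of curvature and compactness of $\Tbar/Mod$ to extract a subsequence of subspaces converging to a subspace of a fiber of the extension $\mathcal V$, and then invoke the dimension bound of Corollary \ref{flats} to reach a contradiction. Your write-up merely makes explicit some details the paper compresses (the constant-dimension subsequence, Grassmannian compactness in the frame coordinates, and the role of Lemma \ref{inprcts} in keeping the limit non-degenerate).
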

\begin{proof}  Consider a tangent subspace $\mathcal S$ to $\caT$ and write $\msc(\mathcal S)$ for the minimal sectional curvature for sections of $\mathcal S$.  For the set $\{\mathcal S\}$ of all tangent subspaces to $\caT$ of a given dimension, let $\mathbf c=\sup_{\{\mathcal S\}}\msc(\mathcal S)$ be the supremum of minimal sectional curvatures.  The supremum is finite and non positive since the sectional curvatures of $\caT$ are negative.  We consider that $\mathbf c$ is zero.  Choose a sequence $\mathcal S_n$ of tangent subspaces with $\msc(\mathcal S_n)$ tending to zero.  We note that the mapping class group $Mod$ acts by isometries, the quotient $\Tbar/Mod$ is compact and the germs $\mathcal V$ provide an extension of the tangent bundle over $\Tbar$.  We can select a subsequence (same notation) of tangent subspaces and elements $\gamma_n\in Mod$ such that $\gamma_n\mathcal S_n$ converges to $\mathcal S'$, a subspace of a fiber of the extension $\mathcal V$.   The sectional curvatures of $\mathcal S'$ are zero.  Corollary \ref{flats} provides that $\dim_{\mathbb R}\mathcal S'\le \dim_{\mathbb C}\caT$.    The desired conclusion is the contrapositive statement.
\end{proof}    


\begin{thebibliography}{BMW10}

\bibitem[Abi77]{Abdegn}
William Abikoff.
\newblock Degenerating families of {R}iemann surfaces.
\newblock {\em Ann. of Math. (2)}, 105(1):29--44, 1977.

\bibitem[Abi80]{Abbook}
William Abikoff.
\newblock {\em The real analytic theory of {T}eichm\"uller space}.
\newblock Springer, Berlin, 1980.

\bibitem[Ahl61]{Ahsome}
Lars~V. Ahlfors.
\newblock Some remarks on {T}eichm\"uller's space of {R}iemann surfaces.
\newblock {\em Ann. of Math. (2)}, 74:171--191, 1961.

\bibitem[Ahl62]{Ahcurv}
Lars~V. Ahlfors.
\newblock Curvature properties of {T}eichm\"uller's space.
\newblock {\em J. Analyse Math.}, 9:161--176, 1961/1962.

\bibitem[Ara06]{Arm}
Javier Aramayona.
\newblock The {W}eil-{P}etersson geometry of the five-times punctured sphere.
\newblock In {\em Spaces of {K}leinian groups}, volume 329 of {\em London Math.
  Soc. Lecture Note Ser.}, pages 219--231. Cambridge Univ. Press, Cambridge,
  2006.

\bibitem[AS10]{AxSchu}
Reynir Axelsson and Georg Schumacher.
\newblock Variation of geodesic length functions over {T}eichm\"{u}ller space.
\newblock preprint, 2010.

\bibitem[Beh06]{Bhr}
Jason~A. Behrstock.
\newblock Asymptotic geometry of the mapping class group and {T}eichm\"uller
  space.
\newblock {\em Geom. Topol.}, 10:1523--1578, 2006.

\bibitem[Ber74]{Bersdeg}
Lipman Bers.
\newblock Spaces of degenerating {R}iemann surfaces.
\newblock In {\em Discontinuous groups and Riemann surfaces (Proc. Conf., Univ.
  Maryland, College Park, Md., 1973)}, pages 43--55. Ann. of Math. Studies, No.
  79. Princeton Univ. Press, Princeton, N.J., 1974.

\bibitem[BF06]{BF}
Jeffrey Brock and Benson Farb.
\newblock Curvature and rank of {T}eichm\"uller space.
\newblock {\em Amer. J. Math.}, 128(1):1--22, 2006.

\bibitem[BM08]{BMi}
Jason~A. Behrstock and Yair~N. Minsky.
\newblock Dimension and rank for mapping class groups.
\newblock {\em Ann. of Math. (2)}, 167(3):1055--1077, 2008.

\bibitem[BMW10]{BMW}
K.~Burns, H.~Masur, and A.~Wilkinson.
\newblock The {W}eil-{P}etersson geodesic flow is ergodic.
\newblock preprint, Arxiv:math/1004.5343, 2010.

\bibitem[Boc47]{Boch}
S.~Bochner.
\newblock Curvature in {H}ermitian metric.
\newblock {\em Bull. Amer. Math. Soc.}, 53:179--195, 1947.

\bibitem[Bro03]{Brkwp}
Jeffrey~F. Brock.
\newblock The {W}eil-{P}etersson metric and volumes of 3-dimensional hyperbolic
  convex cores.
\newblock {\em J. Amer. Math. Soc.}, 16(3):495--535 (electronic), 2003.

\bibitem[Bus92]{Busbook}
Peter Buser.
\newblock {\em Geometry and spectra of compact {R}iemann surfaces}, volume 106
  of {\em Progress in Mathematics}.
\newblock Birkh\"auser Boston Inc., Boston, MA, 1992.

\bibitem[Cha50]{Chb}
Claude Chabauty.
\newblock Limite d'ensembles et g\'eom\'etrie des nombres.
\newblock {\em Bull. Soc. Math. France}, 78:143--151, 1950.

\bibitem[CPar]{CaPa}
William Cavendish and Hugo Parlier.
\newblock Growth of the {W}eil-{P}etersson diameter of moduli space.
\newblock {\em Duke Math. J.}, to appear.

\bibitem[DW03]{DW2}
Georgios Daskalopoulos and Richard Wentworth.
\newblock Classification of {W}eil-{P}etersson isometries.
\newblock {\em Amer. J. Math.}, 125(4):941--975, 2003.

\bibitem[Fay77]{Fay}
John~D. Fay.
\newblock Fourier coefficients of the resolvent for a {F}uchsian group.
\newblock {\em J. Reine Angew. Math.}, 293/294:143--203, 1977.

\bibitem[Gar75]{Gardtheta}
Frederick~P. Gardiner.
\newblock Schiffer's interior variation and quasiconformal mapping.
\newblock {\em Duke Math. J.}, 42:371--380, 1975.

\bibitem[GGHar]{Zh4}
Ren Guo, Subhojoy Gupta, and Zheng Huang.
\newblock Curvatures on the {T}eichm\"{u}ller curve.
\newblock {\em Indiana Univ. Math. Jour.}, to appear.

\bibitem[GT01]{GT}
David Gilbarg and Neil~S. Trudinger.
\newblock {\em Elliptic partial differential equations of second order}.
\newblock Classics in Mathematics. Springer-Verlag, Berlin, 2001.
\newblock Reprint of the 1998 edition.

\bibitem[Har74]{HrCh}
William Harvey.
\newblock Chabauty spaces of discrete groups.
\newblock In {\em Discontinuous groups and Riemann surfaces (Proc. Conf., Univ.
  Maryland, College Park, Md., 1973)}, pages 239--246. Ann. of Math. Studies,
  No. 79. Princeton Univ. Press, Princeton, N.J., 1974.

\bibitem[HM98]{HMbook}
Joe Harris and Ian Morrison.
\newblock {\em Moduli of curves}, volume 187 of {\em Graduate Texts in
  Mathematics}.
\newblock Springer-Verlag, New York, 1998.

\bibitem[HSS09]{HSS}
John Hubbard, Dierk Schleicher, and Mitsuhiro Shishikura.
\newblock Exponential {T}hurston maps and limits of quadratic differentials.
\newblock {\em J. Amer. Math. Soc.}, 22(1):77--117, 2009.

\bibitem[Hua05]{Zh}
Zheng Huang.
\newblock Asymptotic flatness of the {W}eil-{P}etersson metric on
  {T}eichm\"uller space.
\newblock {\em Geom. Dedicata}, 110:81--102, 2005.

\bibitem[Hua07a]{Zh2}
Zheng Huang.
\newblock On asymptotic {W}eil-{P}etersson geometry of {T}eichm\"uller space of
  {R}iemann surfaces.
\newblock {\em Asian J. Math.}, 11(3):459--484, 2007.

\bibitem[Hua07b]{Zh3}
Zheng Huang.
\newblock The {W}eil-{P}etersson geometry on the thick part of the moduli space
  of {R}iemann surfaces.
\newblock {\em Proc. Amer. Math. Soc.}, 135(10):3309--3316 (electronic), 2007.

\bibitem[Hua09]{Zh5}
Zheng Huang.
\newblock Written communication.
\newblock 2009.

\bibitem[IT92]{ImTan}
Y.~Imayoshi and M.~Taniguchi.
\newblock {\em An introduction to {T}eichm\"uller spaces}.
\newblock Springer-Verlag, Tokyo, 1992.
\newblock Translated and revised from the Japanese by the authors.

\bibitem[Jos91]{Jost2}
J{\"u}rgen Jost.
\newblock Harmonic maps and curvature computations in {T}eichm\"uller theory.
\newblock {\em Ann. Acad. Sci. Fenn. Ser. A I Math.}, 16(1):13--46, 1991.

\bibitem[JP92]{Jost1}
J{\"u}rgen Jost and Xiao~Wei Peng.
\newblock Group actions, gauge transformations, and the calculus of variations.
\newblock {\em Math. Ann.}, 293(4):595--621, 1992.

\bibitem[KSLP86]{KaSt}
Anatole Katok, Jean-Marie Strelcyn, F.~Ledrappier, and F.~Przytycki.
\newblock {\em Invariant manifolds, entropy and billiards; smooth maps with
  singularities}, volume 1222 of {\em Lecture Notes in Mathematics}.
\newblock Springer-Verlag, Berlin, 1986.

\bibitem[LSY04]{LSY1}
Kefeng Liu, Xiaofeng Sun, and Shing-Tung Yau.
\newblock Canonical metrics on the moduli space of {R}iemann surfaces. {I}.
\newblock {\em J. Differential Geom.}, 68(3):571--637, 2004.

\bibitem[LSY05a]{LSY2}
Kefeng Liu, Xiaofeng Sun, and Shing-Tung Yau.
\newblock Canonical metrics on the moduli space of {R}iemann surfaces. {II}.
\newblock {\em J. Differential Geom.}, 69(1):163--216, 2005.

\bibitem[LSY05b]{LSY6}
Kefeng Liu, Xiaofeng Sun, and Shing-Tung Yau.
\newblock Geometric aspects of the moduli space of {R}iemann surfaces.
\newblock {\em Sci. China Ser. A}, 48(suppl.):97--122, 2005.

\bibitem[LSY08a]{LSY3}
Kefeng Liu, Xiaofeng Sun, and Shing-Tung Yau.
\newblock Good geometry on the curve moduli.
\newblock {\em Publ. Res. Inst. Math. Sci.}, 44(2):699--724, 2008.

\bibitem[LSY08b]{LSY7}
KeFeng Liu, XiaoFeng Sun, and Shing-Tung Yau.
\newblock New results on the geometry of the moduli space of {R}iemann
  surfaces.
\newblock {\em Sci. China Ser. A}, 51(4):632--651, 2008.

\bibitem[Mas76]{Msext}
Howard Masur.
\newblock Extension of the {W}eil-{P}etersson metric to the boundary of
  {T}eichmuller space.
\newblock {\em Duke Math. J.}, 43(3):623--635, 1976.

\bibitem[McM00]{McM}
Curtis~T. McMullen.
\newblock The moduli space of {R}iemann surfaces is {K}\"ahler hyperbolic.
\newblock {\em Ann. of Math. (2)}, 151(1):327--357, 2000.

\bibitem[Rie05]{Rier}
Gonzalo Riera.
\newblock A formula for the {W}eil-{P}etersson product of quadratic
  differentials.
\newblock {\em J. Anal. Math.}, 95:105--120, 2005.

\bibitem[Roy75]{Royicm}
H.~L. Royden.
\newblock Intrinsic metrics on {T}eichm\"uller space.
\newblock In {\em Proceedings of the International Congress of Mathematicians
  (Vancouver, B. C., 1974), Vol. 2}, pages 217--221. Canad. Math. Congress,
  Montreal, Que., 1975.

\bibitem[Sch86]{Schucurv}
Georg Schumacher.
\newblock Harmonic maps of the moduli space of compact {R}iemann surfaces.
\newblock {\em Math. Ann.}, 275(3):455--466, 1986.

\bibitem[Sch93]{Schu}
Georg Schumacher.
\newblock The curvature of the {P}etersson-{W}eil metric on the moduli space of
  {K}\"ahler-{E}instein manifolds.
\newblock In {\em Complex analysis and geometry}, Univ. Ser. Math., pages
  339--354. Plenum, New York, 1993.

\bibitem[Sch08]{Schucurv2}
Georg Schumacher.
\newblock Positivity of relative canonical bundles of families of canonically
  polarized manifolds.
\newblock preprint, 2008.

\bibitem[Siu86]{Siuwp}
Yum~Tong Siu.
\newblock Curvature of the {W}eil-{P}etersson metric in the moduli space of
  compact {K}\"ahler-{E}instein manifolds of negative first {C}hern class.
\newblock In {\em Contributions to several complex variables}, Aspects Math.,
  E9, pages 261--298. Vieweg, Braunschweig, 1986.

\bibitem[Teo09]{LPT}
Lee-Peng Teo.
\newblock The {W}eil-{P}etersson geometry of the moduli space of {R}iemann
  surfaces.
\newblock {\em Proc. Amer. Math. Soc.}, 137(2):541--552, 2009.

\bibitem[Tra92]{Trap}
Stefano Trapani.
\newblock On the determinant of the bundle of meromorphic quadratic
  differentials on the {D}eligne-{M}umford compactification of the moduli space
  of {R}iemann surfaces.
\newblock {\em Math. Ann.}, 293(4):681--705, 1992.

\bibitem[Tro86]{Trcurv}
A.~J. Tromba.
\newblock On a natural algebraic affine connection on the space of almost
  complex structures and the curvature of {T}eichm\"uller space with respect to
  its {W}eil-{P}etersson metric.
\newblock {\em Manuscripta Math.}, 56(4):475--497, 1986.

\bibitem[Tro92]{Trmbook}
Anthony~J. Tromba.
\newblock {\em Teichm\"uller theory in {R}iemannian geometry}.
\newblock Birkh\"auser Verlag, Basel, 1992.
\newblock Lecture notes prepared by Jochen Denzler.

\bibitem[Wel08]{Wells}
Raymond~O. Wells, Jr.
\newblock {\em Differential analysis on complex manifolds}, volume~65 of {\em
  Graduate Texts in Mathematics}.
\newblock Springer, New York, third edition, 2008.
\newblock With a new appendix by Oscar Garcia-Prada.

\bibitem[Wl89]{Wfharm}
Michael Wolf.
\newblock The {T}eichm\"uller theory of harmonic maps.
\newblock {\em J. Differential Geom.}, 29(2):449--479, 1989.

\bibitem[Wl09]{Wfhess}
Michael Wolf.
\newblock The {W}eil-{P}etersson {H}essian of length on {T}eichm\"{u}ller
  space.
\newblock Arxiv:math/0902.0203v1, 2009.

\bibitem[Wp82]{WlFN}
Scott~A. Wolpert.
\newblock The {F}enchel-{N}ielsen deformation.
\newblock {\em Ann. of Math. (2)}, 115(3):501--528, 1982.

\bibitem[Wp86]{Wlchern}
Scott~A. Wolpert.
\newblock Chern forms and the {R}iemann tensor for the moduli space of curves.
\newblock {\em Invent. Math.}, 85(1):119--145, 1986.

\bibitem[Wp90]{Wlhyp}
Scott~A. Wolpert.
\newblock The hyperbolic metric and the geometry of the universal curve.
\newblock {\em J. Differential Geom.}, 31(2):417--472, 1990.

\bibitem[Wp92]{Wlspeclim}
Scott~A. Wolpert.
\newblock Spectral limits for hyperbolic surfaces. {I}, {I}{I}.
\newblock {\em Invent. Math.}, 108(1):67--89, 91--129, 1992.

\bibitem[Wp03]{Wlcomp}
Scott~A. Wolpert.
\newblock {G}eometry of the {W}eil-{P}etersson completion of {T}eichm\"{u}ller
  space.
\newblock In {\em Surveys in Differential Geometry VIII: Papers in Honor of
  Calabi, Lawson, Siu and Uhlenbeck}, pages 357--393. Intl. Press, Cambridge,
  MA, 2003.

\bibitem[Wp07]{Wlcusps}
Scott~A. Wolpert.
\newblock Cusps and the family hyperbolic metric.
\newblock {\em Duke Math. J.}, 138(3):423--443, 2007.

\bibitem[Wp08]{Wlbhv}
Scott~A. Wolpert.
\newblock Behavior of geodesic-length functions on {T}eichm\"uller space.
\newblock {\em J. Differential Geom.}, 79(2):277--334, 2008.

\bibitem[Wp09]{Wlext}
Scott~A. Wolpert.
\newblock Extension of the {W}eil-{P}etersson connection.
\newblock {\em Duke Math. J.}, 146(2):281--303, 2009.

\bibitem[Wp10]{Wlcbms}
Scott~A. Wolpert.
\newblock {\em Families of {R}iemann surfaces and {W}eil-{P}etersson
  {G}eometry}, volume 113 of {\em CBMS Regional Conference Series in
  Mathematics}.
\newblock Published for the Conference Board of the Mathematical Sciences,
  Washington, DC, 2010.

\bibitem[Yam04]{Yam2}
Sumio Yamada.
\newblock On the geometry of {W}eil-{P}etersson completion of {T}eichm\"uller
  spaces.
\newblock {\em Math. Res. Lett.}, 11(2-3):327--344, 2004.

\end{thebibliography}


\end{document}